\theoremstyle{definition}
\newtheorem{dfn}{Definition}[section]}
\newtheorem{prop}[dfn]{Proposition}
\newtheorem{thm}[dfn]{Theorem}
{\theoremstyle{definition}
\newtheorem{rem}[dfn]{Remark}}
\newtheorem{lem}[dfn]{Lemma}
\newtheorem{cor}[dfn]{Corollary}
\newtheorem{ques}[dfn]{Question}
{\theoremstyle{definition}
\newtheorem{exa}[dfn]{Example}}
\definecolor{alizarin}{rgb}{0.82, 0.1, 0.26}
\definecolor{azure(colorwheel)}{rgb}{0.0, 0.5, 1.0}
\definecolor{blue(pigment)}{rgb}{0.2, 0.2, 0.6}
\definecolor{denim}{rgb}{0.08, 0.38, 0.74}
\definecolor{mint}{rgb}{0.24, 0.71, 0.54}
\definecolor{parisgreen}{rgb}{0.31, 0.78, 0.47}
\definecolor{persiangreen}{rgb}{0.0, 0.65, 0.58}
\definecolor{seagreen}{rgb}{0.18, 0.55, 0.34}
\definecolor{shamrockgreen}{rgb}{0.0, 0.62, 0.38}
\definecolor{green(pigment)}{rgb}{0.0, 0.65, 0.31}
\newcommand{\marginparstretch}{0.6}
\let\oldmarginpar\marginpar
\renewcommand\marginpar[1]{\-\oldmarginpar[\framebox{\setstretch{\marginparstretch}\begin{minipage}{\marginparwidth}{\raggedleft\tiny #1}\end{minipage}}]{\framebox{\setstretch{\marginparstretch}\begin{minipage}{\marginparwidth}{\raggedright\tiny #1}\end{minipage}}}}
\newcommand\reallywidecheck[1]{%
\savestack{\tmpbox}{\stretchto{%
   \scaleto{%
    \scalerel*[\widthof{\ensuremath{#1}}]{\kern-.6pt\bigwedge\kern-.6pt}%
    {\rule[-\textheight/2]{1ex}{\textheight}}%WIDTH-LIMITED BIG WEDGE
   }{\textheight}% 
}{0.5ex}}%
\stackon[1pt]{#1}{\scalebox{-1}{\tmpbox}}%
}
\def\ang<#1>{\langle #1 \rangle}
\def\bigang<#1>{\left\langle #1 \right\rangle}
\numberwithin{equation}{section}
\newcommand{\Dsg}{\operatorname{D^{sg}}}
\newcommand{\Sing}{\operatorname{Sing}}
\newcommand{\Spec}{\operatorname{Spec}}
\newcommand{\Supp}{\operatorname{Supp}}
\newcommand{\Hom}{\operatorname{Hom}}
\newcommand{\Ext}{\operatorname{Ext}}
\newcommand{\REnd}{\operatorname{{\mathbf R}End}}
\newcommand{\End}{\operatorname{End}}
\newcommand{\Max}{\operatorname{Max}}
\newcommand{\fmod}{\operatorname{mod}}
\newcommand{\vect}{\operatorname{vect}}
\newcommand{\Db}{\operatorname{D^b}}
\newcommand{\D}{\operatorname{D}}
\newcommand{\Perf}{\operatorname{Perf}}
\newcommand{\id}{\operatorname{id}}
\newcommand{\CS}{\operatorname{CS}}
\newcommand{\Spcl}{\operatorname{Spcl}}
\newcommand{\Hilb}{\operatorname{Hilb}}
\newcommand{\Sym}{\operatorname{Sym}}
\newcommand{\Ker}{\operatorname{Ker}}
\newcommand{\Th}{\mathbf{Th}}
\renewcommand{\Max}{\operatorname{Max}}
\newcommand{\rk}{\operatorname{\sf rk}}
\newcommand{\pr}{\operatorname{\sf pr}}
\newcommand{\brick}{\operatorname{\sf Bricks}}
\newcommand{\wide}{\operatorname{\sf wide}}
\newcommand{\LS}{\operatorname{LS}}
\newcommand{\JD}{\operatorname{JD}}
\def\Kr{\mathop{\sf Kr}\nolimits}
\newcommand{\sfT}{{\sf T}}
\newcommand{\sfA}{{\sf A}}
\newcommand{\cC}{\mathcal{C}}
\newcommand{\cE}{\mathcal{E}}
\newcommand{\cO}{\mathcal{O}}
\newcommand{\cS}{\mathcal{S}}
\newcommand{\cT}{\mathcal{T}}
\newcommand{\scrA}{\EuScript{A}}
\newcommand{\scrB}{\EuScript{B}}
\newcommand{\scrC}{\EuScript{C}}
\newcommand{\scrI}{\EuScript{I}}
\newcommand{\scrL}{\EuScript{L}}
\newcommand{\scrM}{\EuScript{M}}
\newcommand{\scrN}{\EuScript{N}}
\newcommand{\scrP}{\EuScript{P}}
\newcommand{\scrQ}{\EuScript{Q}}
\newcommand{\scrR}{\EuScript{R}}
\newcommand{\scrS}{\EuScript{S}}
\newcommand{\scrT}{\EuScript{T}}
\newcommand{\scrU}{\EuScript{U}}
\newcommand{\scrV}{\EuScript{V}}
\newcommand{\scrW}{\EuScript{W}}
\newcommand{\scrX}{\EuScript{X}}
\newcommand{\scrKr}{\EuScript{K}r}
\newcommand{\bA}{\mathbb{A}}
\newcommand{\bC}{\mathbb{C}}
\newcommand{\bE}{\mathbb{E}}
\newcommand{\bF}{\mathbb{F}}
\newcommand{\bN}{\mathbb{N}}
\newcommand{\bP}{\mathbb{P}}
\newcommand{\bS}{\mathbb{S}}
\newcommand{\bZ}{\mathbb{Z}}
\newcommand{\m}{\mathfrak{m}}
\newcommand{\p}{\mathfrak{p}}
\newcommand{\simto}{\xrightarrow{\sim}}
\newcommand{\hookto}{\hookrightarrow}
\newcommand{\surjto}{\twoheadrightarrow}
\renewcommand{\l}{\langle}
\renewcommand{\r}{\rangle}
\newcommand{\spec}{\mathrm{ Spec}_{\triangle}}
\newcommand{\ellu}{\ell_{\rm ult}}
\newcommand{\lbr}{\llbracket}
\newcommand{\rbr}{\rrbracket}
\newcommand{\relmiddle}[1]{\mathrel{}\middle#1\mathrel{}}
\tikzset{
        DB/.style={circle,draw=black,circle,fill=white,inner sep=0pt, minimum size=4pt},
        DW/.style={circle,draw=black,fill=black,inner sep=0pt, minimum size=4pt},
        cvertex/.style={circle,draw=black,fill=white,inner sep=1pt,outer sep=3pt},
        vertex/.style={circle,fill=black,inner sep=1pt,outer sep=3pt},
        star/.style={circle,fill=yellow,inner sep=0.75pt,outer sep=0.75pt},
        tvertex/.style={inner sep=1pt,font=\scriptsize},
	pvertex/.style={circle,inner sep=1pt,outer sep=2pt,font=\scriptsize},
        gap/.style={inner sep=0.5pt,fill=white}
}
\newcommand*{\defeq}{\mathrel{\rlap{%
                     \raisebox{0.3ex}{$\m@th\cdot$}}%
                     \raisebox{-0.3ex}{$\m@th\cdot$}}%
                     =}
\let\@wraptoccontribs\wraptoccontribs
\begin{document}
\title[]{Length of triangulated categories}

\author[Y.~Hirano]{Yuki Hirano}
\author[M.~Kalck]{Martin Kalck}
\author[G.~Ouchi]{Genki Ouchi}

\address{Y.~Hirano, Tokyo University of Agriculture and Technology, 2--24--16 Nakacho, Koganei, Tokyo 184--8588, Japan}
\email{hirano@go.tuat.ac.jp}

\address{M.~Kalck, Institut f\"ur Mathematik und Wissenschaftliches Rechnen, 
Universit\"at Graz,
Heinrichstrasse 36, 8010 Graz, Austria}
\email{martin.kalck@uni-graz.at}

\address{G.~Ouchi, Department of Mathematics, Faculty of Science, Hokkaido University, Kita 10, Nishi 8, Kita-Ku, Sapporo, Hokkaido, 060-0810, Japan}
\email{genki.ouchi@math.sci.hokudai.ac.jp}

\begin{abstract}
We introduce the notion of composition series of triangulated categories, which generalizes full exceptional sequences. The \emph{lengths} of composition series yield invariants for triangulated categories.

We  study composition series of derived categories for some classes of projective varieties and finite-dimensional algebras. 
{We prove that certain negative rational curves on rational surfaces cause composition series of different lengths in the derived categories of the surfaces.}
On the other hand, we show that
for derived categories of finite-dimensional hereditary algebras, for nontrivial admissible subcategories of $\Db(\bP^2)$ and for derived categories of some singular varieties, all composition series have the same length.
\end{abstract}

\maketitle{}

%\setcounter{tocdepth}{1}
%\tableofcontents

%\begingroup % start a TeX group
%\color{black}% or whatever color you wish to use
%\setcounter{tocdepth}{1}
%\tableofcontents
%\endgroup

\tableofcontents

\section{Introduction}

%\vspace{2mm}
\subsection{Background and motivation}~\label{Section1.1}

Triangulated categories are actively studied in areas including algebraic geometry, algebraic topology, symplectic geometry, representation theory and mathematical physics. In general, they are far too complex to be able to understand and classify all their objects in detail. This motivates the study of coarser structures like thick subcategories.

The first celebrated result in this area, was the classification of thick subcategories of compact objects in  $p$-local stable homotopy categories by Devinatz, Hopkins and Smith \cite{dhs,hs} in the 1980s. Inspired by this, in an algebro-geometric context, Hopkins \cite{hop}  and Neeman \cite{neeman} classified all thick subcategories, which are automatically $\otimes$-ideals, of perfect complexes over noetherian rings. This result was extended to the classification of $\otimes$-ideals of perfect complexes over  quasi-compact and quasi-separated schemes by Thomason \cite{thomason}. In turn, Thomason's result is the starting point for Balmer's tensor triangular geometry \cite{balmer}, extracting geometric information out of monoidal triangulated categories. 

A classification of thick subcategories is also known for stable categories of maximal Cohen--Macaulay modules over hypersurface singularities (equivalently homotopy categories of matrix factorizations) by Takahashi \cite{takahashi1}, in which case thick subcategories are automatically $\otimes$-submodules. This was extended to $\otimes$-submodules of singularity categories of complete intersections by Stevenson \cite{stevenson} and to derived matrix factorization categories by the first named author \cite{hirano}. 
Moreover, the lattice of thick subcategories of $\Db(X)$ contains information about Fourier-Mukai partners of $X$ and autoequivalences of $\Db(X)$, cf.\ \cite{ho1,ho2,ito,im}, building on work of Matsui \cite{matsui,matsui2}.

However, despite of many efforts on the study of thick subcategories, lattices of thick subcategories for derived categories of projective varieties are still quite poorly understood. In fact, the lattice of thick subcategories and the Matsui spectrum are determined only for the projective line and for elliptic curves, and admissible subcategories, which are very special thick subcategories, on the projective plane have only been classified very recently \cite{pir}. 
Therefore, instead of studying the entire lattice of thick subcategories, we focus on  much coarser invariants of this lattice. These invariants yield new invariants for  triangulated categories and will be described in more detail in the following subsections.

\vspace{2mm}
\subsection{Composition series of triangulated categories}~

Composition series are fundamental in the study of finite groups and modules. One of the most important properties of composition series of finite groups or finite length modules is the Jordan--H\"older (JH) property, implying, in particular, that the length of different composition series coincide (we call this the \emph{Jordan--Dedekind (JD) property} below).  

In this paper, we introduce the notion of composition series for triangulated categories as maximal chains in the lattice of thick subcategories. By definition, every full exceptional sequence gives rise to a composition series whose length is the rank of the Grothendieck group. 
We study the lengths of composition series in many examples from algebraic geometry and representation theory. 
In particular, we show that the JD property holds for all path algebras of acyclic quivers 
and thus for
(certain orbifold) projective lines.

In general, the lattice of thick subcategories has a much richer structure, and we observe that there can be composition series of different lengths. We collect all possible lengths of composition series of a fixed category $\scrT$ into a set, denoted by $\LS(\scrT)$, that we call the \emph{length spectrum} of $\scrT$. {Then the {\it length} of $\scrT$ is defined to be the minimum integer in $\LS(\scrT)$.} 

We  illustrate this by showing that the length spectra of derived categories of certain rational surfaces and certain threefolds are not singletons. A key common feature of many of these examples is that they are small resolutions of singular projective varieties, and their derived categories contain admissible subcategories $\scrA$ with composition series of lengths two and three ($\scrA$ depends on the varieties). More precisely, $\scrA$ is generated by two exceptional objects (yielding a composition series of length two) and contains a bouquet sphere-like object (see Section \ref{section:1.2} for the definition) that contributes to a composition series of length three.
Moreover, $\scrA$ can be viewed as a categorical resolution of singularities, 
cf.\ e.g.\ \cite{kks, ks}. For the Hirzebruch surface $\bF_2$ and certain small resolutions of nodal threefolds, these categories $\scrA$ 
belong to a well-known family of triangulated categories of ``discrete representation type", cf.\ e.g.\ \cite{bgs, ky18}.  
Furthermore,  we show that the derived category of a smooth toric surface with a ($-m$)-curve for some $m>1$ does not satisfy the JD property
 -- again a bouquet sphere-like object plays a key role.

\vspace{2mm}
\subsection{Main results}~\label{section:1.2}

Let $\scrT$ be an essentially small triangulated category.  
If $\ell(\scrT)<\infty$, one of the basic questions is: does $\scrT$ satisfy the JD property?   In a draft version of this paper, we conjectured that the derived category of coherent sheaves on a smooth projective rational surface satisfies the JD property. 
We show that this conjecture does not hold in general. More precisely, we prove the following.

\begin{thm}[Corollary \ref{cor:toric surface}]\label{thm:intro} Let $X$ be a smooth projective toric surface containing a rational curve $C$ with $C^2< -1$. Then  the derived category $\Db(X)$ contains an admissible subcategory $\scrA$  such that $\ell(\scrA)<\infty$,  $\ell(\Db(X)/\scrA)<\infty$ and $\scrA$ does not have  the JD property. In particular, $\Db(X)$  does not satisfy the JD property. 
\end{thm}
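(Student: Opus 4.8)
The plan is to produce a composite admissible subcategory $\widetilde{\scrA}\subseteq\Db(X)$ with $\iota(\widetilde{\scrA})\ge 1$, and then to invoke the principle recalled above: if a composite subcategory of a triangulated category fails the Jordan--Dedekind property, so does the ambient category. The ``in particular'' clause is then automatic, so the whole proof reduces to constructing $\widetilde{\scrA}$ and determining enough of its length spectrum.

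First I would reduce to the case that $C$ is torus-invariant. On a smooth projective toric surface, an irreducible curve $C$ with $C^2<0$ has $h^0(\cO_X(C))=1$, hence is rigid, and since the connected torus acts trivially on $\Pic(X)$ it must fix $C$; so after relabelling $C$ is a torus-invariant $(-m)$-curve with $m\defeq -C^2\ge 2$, corresponding to a ray $\rho$ of the fan of $X$. Deleting $\rho$ produces a complete --- hence projective --- toric surface $Y$ and a birational morphism $\pi\colon X\to Y$ contracting $C$ to a cyclic quotient singularity of type $\tfrac1m(1,1)$ and an isomorphism elsewhere. Attached to this contraction there is an admissible subcategory $\widetilde{\scrA}\subseteq\Db(X)$, a categorical resolution of the singularity of $Y$ at $\pi(C)$, which is described explicitly in the earlier part of the paper (for $X=\bF_2$ it is one of the triangulated categories of discrete representation type of \cite{bgs, ky18}). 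By that description $\widetilde{\scrA}$ is generated by an exceptional pair $(E_1,E_2)$ and contains the bouquet sphere-like object $S\defeq\cO_C(-1)$: resolving $\cO_C$ on $X$ and using $N_{C/X}\cong\cO_C(-m)$ together with $H^1(\bP^1,\cO(-m))\cong k^{m-1}$ gives $\RHom_X(S,S)\cong k\oplus k^{m-1}[-2]$, and $R\pi_*S=0$.

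Next I would read off two composition series of $\widetilde{\scrA}$ of different lengths. Since $\langle E\rangle\simeq\Db(k)$ is simple for every exceptional object $E$, and $(E_1,E_2)$ is an exceptional pair, the chain $0\subsetneq\langle E_1\rangle\subsetneq\langle E_1,E_2\rangle=\widetilde{\scrA}$ is a composition series of length two; as $\widetilde{\scrA}$ is non-zero and not simple, $\ell(\widetilde{\scrA})=2$. On the other hand, the presence of the bouquet sphere-like object $S$ --- whose graded endomorphism algebra $k\oplus k^{m-1}[-2]$ is non-trivial, unlike the rigid $k$ of an exceptional object --- renders the lattice of thick subcategories of $\widetilde{\scrA}$ non-graded: from the analysis of thick subcategories of discrete-representation-type categories carried out earlier, $\widetilde{\scrA}$ also admits a composition series $0\subsetneq\scrS_1\subsetneq\scrS_2\subsetneq\widetilde{\scrA}$ built from $S$, so $\ellu(\widetilde{\scrA})\ge 3$ and $\iota(\widetilde{\scrA})\ge 1$. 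Finally $\widetilde{\scrA}$ is composite in $\Db(X)$: the derived category of a smooth projective toric surface admits a full exceptional collection, and the complement of $\widetilde{\scrA}$ in the associated semiorthogonal decomposition again carries one, so $\ell(\Db(X)/\widetilde{\scrA})<\infty$, while $\ell(\widetilde{\scrA})=2<\infty$ was shown. The principle of the first paragraph now gives the theorem.

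The main obstacle is the content of the two middle paragraphs apart from the formal bookkeeping: pinning down $\widetilde{\scrA}$ precisely enough to see both that $\widetilde{\scrA}\hookrightarrow\Db(X)$ is admissible with a well-understood complement and that $\widetilde{\scrA}$ is a discrete-representation-type category, and --- the genuinely delicate step --- exhibiting the length-three maximal chain, for which the non-triviality of $\RHom_X(S,S)$ attached to the bouquet sphere-like object $S$ is exactly what is needed.
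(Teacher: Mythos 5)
Your overall strategy coincides with the paper's: find a torus-invariant $(-m)$-curve $C$, sit an admissible subcategory $\widetilde{\scrA}=\langle E_1,E_2\rangle$ generated by a pair of line bundles associated to $C$ inside a full exceptional collection of $\Db(X)$ to check compositeness, and produce two composition series of $\widetilde{\scrA}$ of lengths $2$ and $3$ using $S=\cO_C(-1)$. The step you yourself flag as ``the genuinely delicate step'' --- exhibiting a maximal chain of length three in $\widetilde{\scrA}$ --- is exactly where the content lies, and your pointer to ``the analysis of thick subcategories of discrete-representation-type categories'' does not cover it: $\widetilde{\scrA}\cong\scrKr^m_1$ is the derived category of a derived-discrete algebra only for $m=2$ (the case $\Lambda(1,2,0)$), so Theorem~\ref{thm:appendix 2} is unavailable when $m>2$. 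The paper's actual argument (Theorem~\ref{thm:appendix 1}, Proposition~\ref{prop:kronecker}) proceeds differently: $\langle S\rangle$ is simple because $S$ is $(m-1)$-bouquet $2$-sphere-like (Proposition~\ref{prop:bouquet}); the Verdier quotient $\widetilde{\scrA}/\langle S\rangle$ is identified, via the \cite{kks} compatibility of the semiorthogonal decomposition with the contraction, with $\Db\bigl(k[x_1,\dots,x_{m-1}]/\mathfrak{m}^2\bigr)$, whose length is $2$ by Lemma~\ref{lem:appendix 2}; and Lemma~\ref{lem:additivity} then splices these into a length-three composition series. You need both the simplicity of $\langle S\rangle$ and the identification and length computation of the quotient, and neither is supplied by your sketch --- merely observing $\RHom_X(S,S)\cong k\oplus k^{m-1}[-2]$ is not enough.

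Beyond this gap, the bookkeeping in your proposal is sound and occasionally more direct than the paper's. Your torus-invariance argument (rigidity of $C$ plus triviality of the torus action on $\Pic(X)$) is a valid alternative to the paper's use of \cite[Theorem~4.1.3]{cls}. Your compositeness claim is also fine: the paper secures it by taking $\widetilde{\scrA}$ to be the span of two consecutive line bundles $L_{i},L_{i+1}$ in Hille's full exceptional collection of line bundles on $X$, so that the semiorthogonal complement is again generated by exceptional objects. Finally, note that the paper prefers to first reduce the general toric case to the abstract graded Kronecker category $\scrKr^m_1$ via Lemma~\ref{lem:-2 curve} and then realise that category back on the Hirzebruch surface $\bF_m$ (Proposition~\ref{prop:kronecker}); your plan to run the contraction argument directly on $X$ is reasonable, but would require reproving the Hirzebruch-surface identifications in that greater generality.
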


Moreover, we can extract  information on negative rational curves from composition series: 

\begin{thm}[Theorem \ref{thm:toric}]
Let $X$ be a smooth projective toric surface, and put $r\defeq \rk K_0(\Db(X))$.  
If there are disjoint rational curves $C_1,\cdots,C_n$ with $C_i^2<-1$ for all $1\leq i\leq n$, then 
\[
\{r,r+1,\hdots, r+n\}\subseteq \LS(\Db(X)).
\]
\end{thm}

The key observation for the above results is that $\Db(X)$ admits an admissible subcategory $\widetilde{\scrA}$ containing a {bouquet sphere-like} object $\cO_C(-1)$ that induces a composition series whose length is greater than $\rk(K_0(\widetilde{\scrA}))$. Here an {\it $n$-bouquet $d$-sphere-like object} is an object $B$ whose graded endomorphism algebra $\Hom^*(B,B)$ is $n+1$-dimensional, with one-dimensional degree $0$ part and $n$-dimensional degree $d$ part.
The object $\cO_{C}(-1)$ is bouquet 2-sphere-like if and only if $C^2< -1$. 
On the other hand,  in contrast to the surfaces in Theorem \ref{thm:intro}, the blow-up $X$ of $\bP^2_{\bC}$ at a finite set of points in {\it very general position} (see \cite[Definition 2.1]{fer} for the definition)  does not contain any rational curve $C$ with $C^2< -1$ \cite[Proposition 2.3]{fer}, and $\Db(X)$ does not admit any spherical object\footnote{We do not know whether these categories admit bouquet 2-sphere-like objects.} \cite[Theorem 1.1]{hk}. This observation leads us to the following:

\begin{ques}\label{ques:krah}
Let $X$ be the blow-up of $\bP^2_{\bC}$ at a finite set of points in very general position. Does $\Db(X)$ satisfy the JD property?
\end{ques}

If the answer to this question is affirmative,  phantom subcategories constructed by Krah \cite{kra} are of  infinite length.

We provide further counterexamples to the JD property, namely the derived category $\Db(\Lambda)$ of a certain finite-dimensional algebra $\Lambda$ of finite global dimension, which is  {\it  derived-discrete}, cf.\ \cite{bpp} and Section \ref{subs: derived-discrete}.

\begin{thm}[Corollary \ref{cor:appendix}]\label{thm:intro2}
Let $\Lambda$ be a connected finite-dimensional $k$-algebra of finite global dimension, and assume that $\Lambda$ is derived-discrete. Then  $\Db(\Lambda)$ satisfies the JD property if and only if 
 $\Lambda$ is derived equivalent to  the path algebra  of a Dynkin quiver.
\end{thm}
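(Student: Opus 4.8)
We prove the two implications separately, the content being entirely in the ``only if'' direction. For the easy (``if'') direction, suppose $\Lambda$ is derived equivalent to $kQ$ for a Dynkin quiver $Q$. Any triangle equivalence induces an isomorphism of lattices of thick subcategories that is compatible with Verdier quotients, hence preserves $\ell$, $\ellu$ and $\iota$; so it suffices to know that $\Db(kQ)$ satisfies the Jordan--Dedekind property, which follows (applied to $Q$) from our result that $\Db(kQ')$ does so for every finite acyclic quiver $Q'$.

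For the ``only if'' direction we argue by contraposition: assume $\Lambda$ is connected, derived-discrete and of finite global dimension but not derived equivalent to a Dynkin path algebra, and deduce that $\Db(\Lambda)$ fails the Jordan--Dedekind property. By Vossieck's classification of derived-discrete algebras (see also \cite{bgs}), $\Lambda$ is derived equivalent to one of the gentle one-cycle algebras $\Lambda(r,n,m)$ on his list, and the finite global dimension hypothesis discards the self-injective algebras (of infinite global dimension) on that list. As the Jordan--Dedekind property is a derived invariant, we may replace $\Lambda$ by $\Lambda(r,n,m)$; write $N=n+m$ for the number of vertices, so that $N=\rk K_0(\Db(\Lambda(r,n,m)))$. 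It then remains to exhibit two composition series of $\Db(\Lambda(r,n,m))$ of different lengths.

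A composition series of length $N$ can be read off from the explicit model of the derived category of a discrete gentle algebra --- its indecomposables as homotopy-string complexes, together with the combinatorics of morphisms between them (equivalently, Bobi\'nski's Auslander--Reiten quiver): in spite of the cycle in the Gabriel quiver, this produces a maximal chain $0=\scrS_0\subsetneq\cdots\subsetneq\scrS_N=\Db(\Lambda(r,n,m))$ with every subquotient $\simeq\Perf(k)$, hence simple. A composition series of length $N+1$ is built from a sphere-like object, exactly as in the proof of Theorem~\ref{thm:intro}: the unique cycle of the quiver produces an object $S$ with $\RHom(S,S)\cong k\oplus k[-d]$ for some $d>0$ (for instance the simple module $S_1$ over $\Lambda(1,2,0)$, which is $2$-spherical). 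Then $\REnd(S)$ is formal, so $\mathrm{thick}(S)\simeq\Perf\bigl(k[\varepsilon]/(\varepsilon^{2})\bigr)$ with $\varepsilon$ of degree $d$, and the latter has no non-trivial thick subcategory (its spectrum is a point); thus $\mathrm{thick}(S)$ is simple. However $\mathrm{thick}(S)$ is \emph{not} admissible, and one can adjoin to $S$ a completely orthogonal object $T$ with $\mathrm{thick}(T)$ simple, so that $\mathrm{thick}(S,T)\simeq\mathrm{thick}(S)\times\mathrm{thick}(T)$ is decomposable; since $\Db(\Lambda(r,n,m))$ is indecomposable ($\Lambda(r,n,m)$ being connected), $\mathrm{thick}(S,T)$ is a \emph{proper} thick subcategory, even though its $K_0$ already attains the full rank $N$. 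Refining the chain $0\subsetneq\mathrm{thick}(S)\subsetneq\mathrm{thick}(S,T)\subsetneq\Db(\Lambda(r,n,m))$ gives a composition series of length $N+1$, so $\ell(\Db(\Lambda(r,n,m)))\le N<N+1\le\ellu(\Db(\Lambda(r,n,m)))$ and $\iota>0$, which is the desired failure of the Jordan--Dedekind property. (Equivalently: $S$ lies in a composite admissible subcategory $\widetilde{\scrA}$ --- generated by two exceptional objects, hence with a composition series of length two, but containing $S$, hence with one of length three --- and one invokes the principle that a composite subcategory without the Jordan--Dedekind property forces the ambient category to fail it.)

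The main obstacle is the second, longer composition series, produced uniformly over the family: one must understand the lattice of thick subcategories of $\Db(\Lambda(r,n,m))$ near the sphere-like object $S$ precisely enough to exhibit the completely orthogonal companion $T$, to verify that $\mathrm{thick}(S,T)$ stays proper, and to control the subquotients along the chain through it. This is where the structure theory of discrete derived categories is indispensable --- the homotopy-string model, the shape of the Auslander--Reiten quiver, and the analysis of thick subcategories in the ``discrete representation type'' family carried out in Section~\ref{subs: derived-discrete} --- with Theorem~\ref{thm:intro} (where $\cO_C(-1)$ on a curve $C$ with $C^2<-1$ plays the role of $S$) as the geometric blueprint. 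The shorter, length-$N$ series is a more routine output of the same combinatorial model.
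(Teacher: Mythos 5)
The framing of your argument matches the paper: reduce via \cite[Theorem A]{bgs} to $\Lambda(r,n,m)$, use that finite global dimension forces $r<n$, produce a composition series of length $N=n+m$ from a full exceptional sequence, and then produce a second, longer one to break the Jordan--Dedekind property. The ``if'' direction is fine. But the construction of the longer composition series has genuine gaps, and this is precisely where the paper has to work hard (Theorem \ref{thm:appendix 2}).

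First, you assert without proof that the sphere-like object $S$ admits a ``completely orthogonal object $T$ with $\mathrm{thick}(T)$ simple'' inside $\Db(\Lambda(r,n,m))$. This is not established and is not obviously true: in the paper's model $\widetilde{\scrA}_1\subset\Db(\bF_d)$, the relevant second step $\scrS_2\supsetneq\lbr\cO_E(-1)\rbr$ is the \emph{preimage} of $\Perf R$ under the Verdier quotient $\widetilde{\scrA}_1\twoheadrightarrow\scrA_1\cong\Db(R)$ (cf.\ equation \eqref{eq:appendix} and Lemma \ref{lem:appendix 2}), and there is no reason for this preimage to split as $\lbr S\rbr\oplus\lbr T\rbr$. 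Nothing in the structure theory of discrete derived categories you invoke guarantees a completely orthogonal companion $T$ to a sphere-like object; the paper never makes such a claim. Second, even granting $T$, your chain $0\subsetneq\mathrm{thick}(S)\subsetneq\mathrm{thick}(S,T)\subsetneq\Db(\Lambda(r,n,m))$ has only three steps, whereas you need $N+1=n+m+1$; you say ``refining the chain gives a composition series of length $N+1$'' but give no mechanism, and in particular you never control the length of a composition series of $\mathrm{thick}(S,T)$ or of the quotient $\Db(\Lambda(r,n,m))/\mathrm{thick}(S,T)$, nor why the total should be $N+1$ rather than $N$ (or something else). The remark that ``$K_0$ attains full rank $N$'' does not help here, since a simple triangulated category need not have $K_0$ of rank $1$ (e.g.\ $\Dsg$ of an isolated hypersurface singularity). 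Third, the parenthetical ``equivalent'' reformulation is circular: it posits a composite admissible $\widetilde{\scrA}$ generated by two exceptional objects ``with a composition series of length three because it contains $S$,'' but producing that length-three series is exactly what must be proved.

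The paper's proof of Theorem \ref{thm:appendix 2} proceeds quite differently. It uses an idempotent recollement $\lbr\fmod\Lambda/e\rbr\to\Db(\Lambda)\to\Db(e\Lambda e)$ with $e$ chosen so that $\lbr\fmod\Lambda/e\rbr$ is generated by an exceptional sequence of $m+n-r-1$ simple modules and $e\Lambda e\cong\Lambda(r,r+1,0)$; this reduces the problem to exhibiting a length-$(r+2)$ composition series of $\Db(\Lambda(r,r+1,0))$. A second idempotent reduction makes the $(r+1)$-sphere-like simple $S_1$ appear as the kernel (simple by Proposition \ref{prop:bouquet}), with quotient $\Db(\Lambda(r,r,0))$; and then one analyses $\Perf(\Lambda(r,r,0))$ via a semiorthogonal decomposition with a $(1-r)$-sphere-like object together with the identification $\Dsg(\Lambda(r,r,0))\cong\Db(k)/[r]$ (simple by Corollary \ref{cor:orbit}). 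None of this can be bypassed by the orthogonal-companion argument you propose. To repair your proof you would need either to establish the existence and properties of $T$ uniformly across the family and supply the refinement step, or, more realistically, to replace that paragraph by the recollement-based computation.
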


On the other hand, we prove that certain triangulated categories satisfy the JD property -- namely, the derived categories $\Db(kQ)$ of acyclic quivers $Q$. More precisely, we show the following statement using a recent result
by Asai \cite{asai}. Interestingly, in this case, we are able to classify all composition series without using a classification of all thick subcategories.

\begin{thm}[Theorem \ref{thm:JD wild}] \label{Thm:1.5.Intro}
{Let $Q$ be a finite acyclic quiver.
Then every composition series of $\Db(kQ)$ arises from a full exceptional sequence. In particular, $\Db(kQ)$ satisfies the JD property.}
\end{thm}

In combination with \cite{pir,bp2}, this implies the following result.

\begin{cor}\label{Cor:1.6.Intro}
Admissible subcategories $\scrA \subsetneq \Db(\bP^2)$ satisfy the JD-property.
\end{cor}

This shows that the lattices of thick subcategories of $\bP^2$ and a toric surface with a negative curve $C$ with $C^2<-1$ have different properties. This might be an evidence of the affirmative answer to Question \ref{ques:krah} since the surfaces in the question never have any rational curve $C$ with $C^2<-1$.

\vspace{2mm}

\begin{comment}
\vspace{1mm}
\subsection{Notations and conventions}
Throughout, this paper we use the following notations, conventions and assumptions.
\begin{itemize}
    \item Let $k$ be a field.
    \item  All categories are $k$-linear and essentially small.
    \item Functors between triangulated categories are  $k$-linear and exact.

    \item We denote the Verdier quotient of a triangulated category $\scrT$ by a thick subcategory $\scrU$ by $\scrT/\scrU$.
    
    \item Subcategories are assumed to be closed under isomorphisms.

    \item $\Db(X)$ denotes the bounded derived category of coherent sheaves on a scheme $X$, and $\Perf X\subseteq \Db(X)$ denotes the thick subcategory of perfect complexes on $X$. 

    \item For a (not necessarily commutative) noetherian ring $\Lambda$, we denote the bounded derived category of finitely generated right $\Lambda$-modules by $\Db(\Lambda)$, and write $\Perf \Lambda$ for the full subcategory of perfect complexes over $\Lambda$.
    
    \item For a proper morphism $f\colon X\to Y$ of smooth varieties, we write $f_*\colon \Db(X)\to \Db(Y)$ and  $f^*\colon \Db(Y)\to \Db(X)$ for the derived push-forward and the derived pull-back, respectively. Similarly, we denote by $\otimes_X$ the derived tensor product on $X$.
    
    \item Points on varieties (or schemes) are not necessarily closed. 
\end{itemize}
\end{comment}

\vspace{1mm}
\subsection{Notations and conventions}~

Let $k$ be a field. All categories are $k$-linear and essentially small. Functors between triangulated categories are  $k$-linear and exact.
We denote the Verdier quotient of a triangulated category $\scrT$ by a thick subcategory $\scrU$ by $\scrT/\scrU$.
Subcategories are assumed to be closed under isomorphisms.
$\Db(X)$ denotes the bounded derived category of coherent sheaves on a scheme $X$, and $\Perf X\subseteq \Db(X)$ denotes the thick subcategory of perfect complexes on $X$. 
For a (not necessarily commutative) noetherian ring $\Lambda$, we denote the bounded derived category of finitely generated right $\Lambda$-modules by $\Db(\Lambda)$, and write $\Perf \Lambda$ for the full subcategory of perfect complexes over $\Lambda$. For finitely many elements $a_1,\hdots,a_n\in\Lambda$, we denote by $\l a_1,\hdots, a_n\r$ the two-sided ideal of $\Lambda$ generated by $a_1,\hdots,a_n$.
For a proper morphism $f\colon X\to Y$ of smooth varieties, we write $f_*\colon \Db(X)\to \Db(Y)$ and  $f^*\colon \Db(Y)\to \Db(X)$ for the derived push-forward and the derived pull-back, respectively. Similarly, we denote by $\otimes_X$ the derived tensor product on $X$.
Points on varieties (or schemes) are not necessarily closed.

\vspace{3mm}
\subsection{Acknowledgements}~

We would like to thank Greg Stevenson for informing us about the existence of triangulated categories without the JD property. We also thank Hiroki Matsui, Nebojsa Pavic, Alexey Elagin, Souvik Dey, Nathan Broomhead and Sota Asai for giving valuable comments. 
We are very grateful to the referee for reading our text very carefully. Their many comments and suggestions greatly improved the quality of this work.
Y.H. is supported by JSPS KAKENHI Grant Number 23K12956. G.O. is supported by JSPS KAKENHI Grant Number 19K14520.

\vspace{3mm}
\section{Length of triangulated categories}\label{section:length}

 We assume readers are familiar with notions of full exceptional sequences, admissible subcategories and semi-orthogonal decompositions of triangulated categories. See e.g.\ \cite{huy} for these notions. Throughout this section, $\scrT$ denotes a triangulated category.

\vspace{2mm}
\subsection{Semi-simple triangulated categories} 
~

 An object $A\in \scrT$ is called a {\it direct summand} of $B\in\scrT$ if there is an object $A'\in \scrT$ and an isomorphism $A\oplus A'\cong B$. A {\it thick} subcategory of $\scrT$ is a full triangulated subcategory that is closed under taking direct summands.
The set of thick subcategories of $\scrT$, denoted by $\Th(\scrT)$,  is partially ordered by inclusions. Consider a subset $\{\scrU_i\}_{i\in I}\subset \Th(\scrT)$. Then  the intersection $\bigwedge_{i\in I}\scrU_i\defeq\bigcap_{i\in I}\scrU_i$ is the infimum of $\{\scrU_i\}_{i\in I}$.  Dually,  $\bigvee_{i\in I}\scrU_i\defeq \bigwedge_{\scrU_i\subseteq \scrW}\scrW$, which is the intersection of all thick subcategories $\scrW\in \Th(\scrT)$ containing all $\scrU_i$, is the supremum of $\{\scrU_i\}_{i\in I}$. Thus the poset $\Th(\scrT)$ is a complete lattice. 

For any  collection $\cC$ of objects in $\scrT$, we denote by 
\[
\lbr \cC\rbr\in\Th(\scrT)
\]
the thick subcategory generated by $\cC$, and we write $[\cC]$  for the triangulated subcategory generated by $\cC$. As usual, 
for $\cC_1,\hdots,\cC_n\subset\scrT$ and $A_1,\hdots,A_n\in \scrT$, we write $\lbr\cC_1,\hdots,\cC_n\rbr\defeq \lbr\cC_1\cup\cdots\cup\cC_n\rbr$ and $\lbr A_1,\hdots,A_n\rbr\defeq\lbr\{A_1,\hdots,A_n\}\rbr$. The same notation is used for $[-]$.

\begin{dfn}An object $G\in\scrT$ is called a {\it split generator} of $\scrT$ if $\scrT=\lbr G\rbr$, and we say that $\scrT$ is {\it finitely generated} if $\scrT$ admits a split generator.
\end{dfn}

\noindent
We need the following well-known Morita theorem for triangulated categories.
\begin{thm}[{\cite{kel2}}]\label{thm:keller equiv}
Let $\scrT$ be a dg-enhanced triangulated category.
\begin{itemize}
\item[$(1)$] Assume that $\scrT$ has a split generator $G\in\scrT$, and write $A\defeq \REnd(G)$ for the dg-endomorphism algebra. If $\scrT$ is idempotent complete, there is a triangulated equivalence 
\[
\scrT\cong \Perf A.
\]
\item[$(2)$] Assume $\scrT$ admits an exceptional sequence  $E_1,\hdots,E_n$, which is full, i.e. 
$\scrT=\lbr E_1,\hdots,E_n\rbr$.
Then $\scrT$ is idempotent complete, and there is a triangulated equivalence
\[
\scrT\cong \Perf \REnd(\oplus_{i=1}^nE_i).
\]
\end{itemize}
\begin{proof}
(1) This is \cite{kel2} (see also \cite[Theorem 3.8\,(b)]{kel3}). \\
(2)  The idempotent completeness follows from \cite[Corollary A.12]{ls} (see also \cite[Lemma 4.8]{bdfik}), since $\lbr E_i\rbr\cong \Db(k)$ is idempotent complete.
\end{proof}
\end{thm}

Let $F\colon \scrT\to \scrT'$ be an exact functor between  triangulated categories. For $\scrU\in\Th(\scrT)$, we define a full subcategory $F(\scrU)\defeq\{A\in\scrT'\mid A\cong F(B) \mbox{ for some } B\in\scrT\}\subseteq \scrT'$. Then the assignment $\scrU\mapsto \lbr F(\scrU)\rbr$ defines an order-preserving map
\[
\lbr F\rbr\colon \Th(\scrT)\to \Th(\scrT').
\]
Dually, for $\scrU'\in\Th(\scrT')$,
put $F^{-1}(\scrU')\defeq\{A\in \scrT\mid F(A)\in \scrU'\}$. Then $F^{-1}(\scrU')$ is  a thick subcategory of $\scrT$, and so there is an order-preserving map
\[
F^{-1}\colon \Th(\scrT')\to \Th(\scrT).
\]

\begin{prop}[{\cite[Lemma 3.1]{takahashi2}}]\label{verdier}
Let $\scrU\in \Th(\scrT)$, and denote by $F\colon \scrT\to \scrT/\scrU$ the natural quotient functor. Then the map $F^{-1}\colon \Th(\scrT/\scrU)\to \Th(\scrT)$ induces a lattice isomorphism
 \[
 F^{-1}\colon \Th(\scrT/\scrU)\simto \{\scrV\in \Th(\scrT)\mid \scrU\subseteq \scrV\},
 \]
 and its inverse is given by the assignment $\scrV\mapsto \scrV/\scrU$. 
\end{prop}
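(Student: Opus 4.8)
\emph{Proof plan.} The plan is to produce an explicit two-sided inverse of $F^{-1}$ and then observe that the resulting order isomorphism is automatically a lattice isomorphism.

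First I would check that $F^{-1}$ takes values in the up-set $\{\scrV\in\Th(\scrT)\mid\scrU\subseteq\scrV\}$ and is order-preserving: order-preservation is immediate, and for the first point note that $F$ sends every object of $\scrU$ to $0$, which lies in every thick subcategory of $\scrT/\scrU$, so $\scrU\subseteq F^{-1}(\scrW)$ for all $\scrW\in\Th(\scrT/\scrU)$. The candidate inverse sends a thick $\scrV\supseteq\scrU$ to $\scrV/\scrU$, understood as the (strict) full subcategory of $\scrT/\scrU$ consisting of the objects that are isomorphic in $\scrT/\scrU$ to an object of $\scrV$. The first substantive point is that this $\scrV/\scrU$ really is a thick subcategory of $\scrT/\scrU$ and coincides with the Verdier quotient of $\scrV$ by $\scrU$; this rests on the standard facts that, $\scrU$ being thick in $\scrV$, the canonical functor $\scrV/\scrU\to\scrT/\scrU$ is fully faithful, and that its essential image is closed under shifts, cones and direct summands (the argument for cones being a roof/calculus-of-fractions computation that uses thickness of $\scrV$). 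I would either quote this from the standard treatment of Verdier localization or include the short verification.

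Next I would check that the two composites are the identity. For thick $\scrV\supseteq\scrU$ the inclusion $\scrV\subseteq F^{-1}(\scrV/\scrU)$ is trivial, so the content is the reverse inclusion: if $A\in\scrT$ becomes isomorphic in $\scrT/\scrU$ to some object of $\scrV$, then $A\in\scrV$. This is the heart of the argument, and it is exactly where the containment $\scrU\subseteq\scrV$ and the thickness of $\scrU$ (making $\scrU$ the kernel of $F$) are used: an isomorphism $F(A)\cong F(V)$ with $V\in\scrV$ is, by the calculus of fractions, represented by a roof $A\leftarrow W\to V$ whose two legs both have cone in $\scrU\subseteq\scrV$, and rotating the two defining triangles and using that $\scrV$ is a triangulated subcategory puts $W$, and then $A$, into $\scrV$. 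In the other direction, for $\scrW\in\Th(\scrT/\scrU)$ and $\scrV\defeq F^{-1}(\scrW)$, the inclusion $\scrV/\scrU\subseteq\scrW$ is clear from the definitions, while the reverse inclusion uses only that $F$ is the identity on objects: any $X\in\scrW$ equals $F(A)$ for some $A\in\scrT$, hence $A\in F^{-1}(\scrW)=\scrV$ and $X\in\scrV/\scrU$; thus $\scrV/\scrU=\scrW$.

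Finally I would assemble the pieces: $F^{-1}$ and $\scrV\mapsto\scrV/\scrU$ are mutually inverse, order-preserving maps, hence an order isomorphism between $\Th(\scrT/\scrU)$ and the up-set $\{\scrV\mid\scrU\subseteq\scrV\}$, which is itself a complete lattice since it is closed in $\Th(\scrT)$ under the arbitrary meets and joins described in Section \ref{section:length}. As arbitrary meets and joins in a complete lattice depend only on the partial order, this order isomorphism automatically preserves them and so is an isomorphism of complete lattices. The step I expect to be the main obstacle is the first substantive point, i.e. verifying carefully via the roof calculus that $\scrV/\scrU$ is a genuine thick subcategory of $\scrT/\scrU$ (equivalently, identifying the essential image of the fully faithful functor $\scrV/\scrU\to\scrT/\scrU$); the two identity checks are then short diagram chases and the passage to lattices is purely formal.
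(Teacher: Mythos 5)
Your argument is correct. Note, however, that the paper offers no proof of its own for this proposition; it simply cites \cite[Lemma 3.1]{takahashi2}. Your plan — construct the explicit inverse $\scrV\mapsto\scrV/\scrU$, verify via the roof calculus that $F^{-1}(\scrV/\scrU)=\scrV$ (the key point being that $\scrU\subseteq\scrV$ together with thickness of $\scrU$ and $\scrV$ forces any $A$ with $F(A)$ isomorphic to something in $F(\scrV)$ to lie in $\scrV$), check the easy composite $F^{-1}(\scrW)/\scrU=\scrW$, and finally pass from an order isomorphism of complete lattices to a lattice isomorphism — is the standard proof of this correspondence and is essentially what Takahashi's Lemma 3.1 does. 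The only place I would suggest being a bit more explicit is the verification that the essential image $\scrV/\scrU$ is closed under direct summands in $\scrT/\scrU$: given $X\oplus Y\cong F(V)$ with $V\in\scrV$, write $X=F(A)$, $Y=F(B)$, observe $F(A\oplus B)\cong F(V)$, apply the roof argument (the part you already isolated as the heart of the proof) to get $A\oplus B\in\scrV$, and then use thickness of $\scrV$ in $\scrT$ to conclude $A\in\scrV$, hence $X\in\scrV/\scrU$. This small rearrangement makes clear that thickness of $\scrV/\scrU$ is not a black box but follows from the very same calculation that shows $F^{-1}(\scrV/\scrU)\subseteq\scrV$.
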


A functor $F\colon \scrT\to \scrT'$ is {\it dense} if every object in $\scrT'$ is a direct summand of $F(A)$ for some $A\in \scrT$. For  fully faithful  dense $F\colon \scrT\to \scrT'$ and  $\scrU\in\Th(\scrT)$, we define 
\[
\widetilde{F}(\scrU)\defeq\{A\in \scrT'\mid \exists B\in \scrT' \mbox{ such that } A\oplus B\in F(\scrU)\}.
\]

\begin{prop}\label{dense}
Let $F\colon \scrT\to\scrT'$ be a fully faithful dense functor. 
\begin{itemize}
\item[$(1)$] $\widetilde{F}(\scrU)$ is a thick subcategory of $\scrT'$. In particular, $\widetilde{F}(\scrU)=\lbr F(\scrU)\rbr$.
\item[$(2)$] The map $F^{-1}\colon \Th(\scrT')\to\Th(\scrT)$ is a lattice isomorphism, and its inverse is $\lbr F\rbr\colon \Th(\scrT)\to\Th(\scrT')$.
\end{itemize}
\begin{proof}
(1) By the same argument as in the proof of \cite[Proposition 3.13]{balmer},  the following equality holds:
\[
\widetilde{F}(\scrU)=\{A\in \scrT'\mid A\oplus (A[1])\in F(\scrU)\}.
\]
By this equality and the fully faithfulness of $F$, it is easy to see that $\widetilde{F}(\scrU)$ is a triangulated subcategory of $\scrT$. By definition, the subcategory $\widetilde{F}(\scrU)$ is closed under direct summands, and so $\widetilde{F}(\scrU)\in \Th(\scrT')$. {Then $\widetilde{F}(\scrU)=\lbr F(\scrU)\rbr$ holds.}

(2) The first assertion is \cite[Proposition 2.11\,(1)]{matsui}, and the latter one follows from (1) and a similar argument as in  \cite[Proposition 3.13]{balmer}.
\end{proof}
\end{prop}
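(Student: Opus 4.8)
\textbf{Proof proposal for Proposition \ref{dense}.}

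The plan is to reduce everything to standard facts about idempotent completions, using the fact that a fully faithful dense functor $F\colon\scrT\to\scrT'$ exhibits $\scrT'$ as (equivalent to) a triangulated subcategory of $\scrT$ closed under summands of objects of the form $F(A)$. First I would establish the ``shift trick'' equality
\[
\widetilde{F}(\scrU)=\{A\in\scrT'\mid A\oplus A[1]\in F(\scrU)\},
\]
exactly as in \cite[Proposition 3.13]{balmer}: the inclusion $(\supseteq)$ is immediate since $A$ is a summand of $A\oplus A[1]$, and for $(\subseteq)$, if $A\oplus B\in F(\scrU)$ then $(A\oplus A[1])$ is a summand of $(A\oplus B)\oplus(A\oplus B)[1]$ via an explicit idempotent, hence lies in $F(\scrU)$ because $\scrU$ is thick and $F$ is triangulated and fully faithful (so $F(\scrU)=\lbr F(\scrU)\rbr\cap F(\scrT)$ is closed under the relevant summands within $F(\scrT)$). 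Granting this, $\widetilde{F}(\scrU)$ is visibly closed under shifts; closure under cones follows by applying fully faithfulness of $F$ to lift a triangle in $\scrT'$ with two vertices of the form $A_i\oplus A_i[1]\in F(\scrU)$ to a triangle in $\scrT$ landing in $\scrU$ (after absorbing shifts), and closure under summands is built into the definition. This gives (1), together with the identification $\widetilde{F}(\scrU)=\lbr F(\scrU)\rbr$ via the two easy inclusions already sketched in the proof template in the excerpt.

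For (2), the first sentence is quoted from \cite[Proposition 2.11(1)]{matsui}: $F^{-1}\colon\Th(\scrT')\to\Th(\scrT)$ is well-defined and order-preserving, and it is a bijection because $F$ is fully faithful and dense. To identify the inverse with $\lbr F\rbr$, I would check the two composites. For $\scrU\in\Th(\scrT)$, one has $F^{-1}(\lbr F(\scrU)\rbr)=F^{-1}(\widetilde F(\scrU))$; using the shift-trick description, $A\in F^{-1}(\widetilde F(\scrU))$ means $F(A)\oplus F(A)[1]=F(A\oplus A[1])\in F(\scrU)$, which by full faithfulness of $F$ forces $A\oplus A[1]\in\scrU$, hence $A\in\scrU$ since $\scrU$ is thick — so $F^{-1}\circ\lbr F\rbr=\id$. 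Conversely, for $\scrV\in\Th(\scrT')$, clearly $\lbr F(F^{-1}(\scrV))\rbr\subseteq\scrV$; for the reverse inclusion, take $A\in\scrV$; by density there is $B$ with $A\oplus B\cong F(C)$, and replacing $B$ by $B\oplus(\text{suitable shift})$ one arranges $A\oplus B'\cong F(C')$ with $B'\in\scrV$ as well (using that $\scrV$ is thick), whence $C'\in F^{-1}(\scrV)$ and $A$ is a summand of $F(C')\in\lbr F(F^{-1}(\scrV))\rbr$; thus $\scrV\subseteq\lbr F(F^{-1}(\scrV))\rbr$, giving $\lbr F\rbr\circ F^{-1}=\id$. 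Order-preservation of both maps is clear from the definitions, so they are mutually inverse lattice isomorphisms.

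The main obstacle is the bookkeeping in the surjectivity/inverse argument for (2): one must ensure that when writing $A$ as a summand of some $F(C)$ one can choose the complement $B$ inside $\scrV$ (so that $C\in F^{-1}(\scrV)$), which is where thickness of $\scrV$ and the shift trick are used in tandem — this is precisely the content imported from \cite{matsui, balmer}, and the cleanest route is to cite those results rather than reprove them. Everything else is routine diagram-chasing with triangles and idempotents; no new idea beyond the shift trick is needed.
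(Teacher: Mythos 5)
Your proposal takes essentially the same route as the paper: use the shift-trick description $\widetilde{F}(\scrU)=\{A\in\scrT'\mid A\oplus A[1]\in F(\scrU)\}$ (imported from Balmer's Proposition 3.13) to get thickness and $\widetilde F(\scrU)=\lbr F(\scrU)\rbr$, then cite Matsui's Proposition 2.11(1) for bijectivity and run the same shift trick to identify the inverse with $\lbr F\rbr$. The one step that is stated imprecisely is the surjectivity check, where you propose to ``replace $B$ by $B\oplus(\text{suitable shift})$'' so that the complement $B'$ lies in $\scrV$. That is not quite the right mechanism: what one actually shows is that $A\oplus A[1]$ itself lies in $F(\scrT)$. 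Indeed, from $A\oplus B\cong F(C)$ the projection idempotent onto $B$ lifts, by full faithfulness, to an idempotent $e'\in\End(C)$, and the cone of that idempotent --- which is $A\oplus A[1]$ --- then lies in $F(\scrT)$ because $F(\scrT)$ is a triangulated subcategory. Since $A\oplus A[1]\in\scrV$ by thickness of $\scrV$ and $A\oplus A[1]\cong F(E)$ with $E\in F^{-1}(\scrV)$, the object $A$ is a summand of $F(E)\in F(F^{-1}(\scrV))$, and no auxiliary complement $B'$ needs to be placed inside $\scrV$. This is a small correction of the gloss only; you correctly flag that the detail is the content of the cited results of Balmer and Matsui, exactly as the paper does.
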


 Thick subcategories $\scrU_1,\hdots, \scrU_n\in \Th(\scrT)$ are {\it orthogonal} to each other, denoted by $\scrU_1\perp\cdots\perp\scrU_n$, if $\Hom(A_i,A_j)=0$ for each $A_i\in \scrU_i$ and $i\neq j$. The triangulated category $\scrT$ is the {\it direct sum} of $\scrU_1,\hdots, \scrU_n$, denoted by \[\scrT=\scrU_1\oplus\cdots\oplus\scrU_n,\]
if $\scrU_1\perp\cdots\perp\scrU_n$  and every object of $\scrT$ is the direct sum of objects in $\scrU_i$. 
We say that $\scrT$ is {\it indecomposable} if there are no non-trivial thick subcategories $\scrU_1$ and $\scrU_2$ such that $\scrT=\scrU_1\oplus \scrU_2$. 
The following is standard, and so we omit the proof.

\begin{prop}\label{prop:direct sum}
Assume that $\scrT=\scrU_1\oplus\cdots \oplus\scrU_n$, and let $A\in \scrT$. Then, for each $1\leq i\leq n$, there exists $\pr_i(A)\in \scrU_i$, which is unique up to isomorphism,  such that $A\cong \pr_1(A)\oplus\cdots \oplus \pr_n(A)$. Furthermore, the assignment $A\mapsto \pr_i(A)$ defines an exact functor $\pr_i\colon \scrT\to \scrU_i$ that is right and left adjoint to the natural inclusion $\scrU_i\hookto \scrT$. In particular, $\scrU_i$ is an admissible subcategory of $\scrT$.
\end{prop}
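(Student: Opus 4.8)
The plan is to build $\pr_i$ from an explicit choice of decompositions and then read off all its properties. By the definition of a direct sum, every $A\in\scrT$ carries an isomorphism $\alpha_A\colon A\simto A_1\oplus\cdots\oplus A_n$ with $A_j\in\scrU_j$; I would fix one such $\alpha_A$ for each $A$ once and for all and set $\pr_i(A)\defeq A_i$. The elementary fact underlying everything is that, thanks to $\scrU_1\perp\cdots\perp\scrU_n$, any morphism $B_1\oplus\cdots\oplus B_n\to C_1\oplus\cdots\oplus C_n$ with $B_j,C_j\in\scrU_j$ is \emph{diagonal}: its matrix entry $B_l\to C_k$ lies in $\Hom(B_l,C_k)=0$ for $k\neq l$, hence the morphism equals $\bigoplus_j g_j$ for uniquely determined $g_j\colon B_j\to C_j$. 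Applying this to an isomorphism and to its inverse shows each $g_j$ is invertible; consequently, if $A\cong B_1\oplus\cdots\oplus B_n$ with $B_j\in\scrU_j$, then $B_j\cong\pr_j(A)$ for all $j$, which is the claimed uniqueness up to isomorphism.

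Next I would upgrade $\pr_i$ to a functor and identify the adjunctions. For $f\colon A\to B$ the morphism $\alpha_B\circ f\circ\alpha_A^{-1}$ is diagonal by the above, say $\bigoplus_j f_j$, and one sets $\pr_i(f)\defeq f_i\colon\pr_i(A)\to\pr_i(B)$; since forming the matrix of a morphism is additive, multiplicative, and sends identities to identities, $\pr_i$ is a well-defined additive functor, and the $\alpha_A$ assemble into the required natural identification $A\cong\pr_1(A)\oplus\cdots\oplus\pr_n(A)$. Writing $\iota_i\colon\scrU_i\hookto\scrT$ for the inclusion, and using for $A\in\scrT$, $U\in\scrU_i$ that $\Hom_\scrT(A_j,U)=\Hom_\scrT(U,A_j)=0$ for $j\neq i$ together with fullness of $\scrU_i$, one obtains natural isomorphisms $\Hom_\scrT(A,\iota_iU)\cong\Hom_{\scrU_i}(\pr_iA,U)$ and $\Hom_\scrT(\iota_iU,A)\cong\Hom_{\scrU_i}(U,\pr_iA)$; thus $\pr_i$ is simultaneously a left and a right adjoint of $\iota_i$.

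It remains to see that $\pr_i$ is exact, and I expect this to be the only step beyond routine bookkeeping. Since $\scrU_i$ is thick it is a triangulated subcategory, so $\iota_i$ is an exact functor, and a right (equivalently, a left) adjoint of an exact functor between triangulated categories is again exact; hence $\pr_i$ is exact, and I would simply invoke this standard fact about adjoints of triangulated functors. Alternatively one argues by hand: the candidate triangle $\pr_iA\to\pr_iB\to\pr_iC\to\pr_iA[1]$ extracted from a distinguished triangle $A\to B\to C\to A[1]$ is, because all four of its maps are diagonal and $\scrU_i$ is closed under shifts, a direct summand of that distinguished triangle in the category of candidate triangles, and direct summands of distinguished triangles are distinguished. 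In particular $\iota_i$ admits both a left and a right adjoint, so $\scrU_i$ is admissible, which completes the proof.
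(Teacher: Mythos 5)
The paper does not give a proof of this proposition, explicitly deferring it as ``standard.'' Your argument is correct and supplies exactly the standard reasoning: orthogonality forces morphisms between decomposed objects to be diagonal, which immediately yields uniqueness of the components $\pr_i(A)$ up to isomorphism and well-definedness and functoriality of $\pr_i$; and the Hom computations
$\Hom_{\scrT}(A,\iota_iU)\cong\Hom_{\scrU_i}(\pr_iA,U)$ and $\Hom_{\scrT}(\iota_iU,A)\cong\Hom_{\scrU_i}(U,\pr_iA)$
give the two-sided adjunction and hence admissibility. For exactness, your first route --- that a left or right adjoint of an exact functor between triangulated categories is again exact --- is a well-known fact and is the most economical way to finish.

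One remark on your alternative exactness argument. The general statement that a direct summand (in the category of candidate triangles) of a distinguished triangle is itself distinguished is true but is genuinely non-trivial; it is not a formal consequence of the axioms and requires a careful argument. In the present situation, however, you can avoid invoking it in full generality: if $A\to B\to C\to A[1]$ is distinguished with all maps diagonal, complete $\pr_iA\to\pr_iB$ to a distinguished triangle $\pr_iA\to\pr_iB\to D_i\to\pr_iA[1]$ in $\scrU_i$; then $\bigoplus_i D_i$ and $C\cong\bigoplus_i\pr_iC$ are both cones of $A\to B$, so they are isomorphic over the identity on $A$ and $B$, and because $D_i,\pr_iC\in\scrU_i$ this comparison isomorphism is itself forced to be diagonal by orthogonality, giving $D_i\cong\pr_iC$ compatibly. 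This makes the direct argument self-contained without appealing to the general direct-summand theorem. Either way, your proof is complete and correct.
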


By the previous proposition, there is a natural exact equivalence
\begin{equation}\label{direct sum quotient}
(\scrU_1\oplus\cdots\oplus\scrU_n)/\scrU_n\cong \scrU_1\oplus\cdots\oplus\scrU_{n-1}.
\end{equation}
The following is also elementary.

\begin{prop}\label{prop:orthogonal vee}
If $\scrU_1,\hdots,\scrU_n\in\Th(\scrT)$ are orthogonal to each other, then the full subcategory 
$\sum_{i=1}^n\scrU_i\defeq \{\oplus_{i=1}^nA_i\mid A_i\in \scrU_i \}\subseteq \scrT$ is a thick subcategory of $\scrT$. In particular, 
\[
\scrU_1\vee\cdots\vee\scrU_n=\scrU_1\oplus\cdots\oplus \scrU_n,
\]
where we identify $\scrU_i$ with a thick subcategory of the left hand side $\vee_{i=1}^n\scrU_i$.

\end{prop}

For finitely many posets $L_1,\hdots,L_n$, we define the {\it direct sum} of $L_1,\hdots,L_n$ to be the set $L_1\oplus\cdots\oplus L_n\defeq\{(a_1,\hdots,a_n)\mid a_i\in L_i\}$ with the order given by
\[
(a_1,\hdots,a_n)\leq(b_1,\hdots,b_n) \Longleftrightarrow a_i\leq b_i \mbox{ for }1\leq \forall i\leq n.
\]

\begin{prop}\label{prop:od}
Let  $\scrT=\scrT_1\oplus\cdots\oplus \scrT_n$ be a direct sum decomposition. 
\begin{itemize}
\item[$(1)$] For any $\scrU\in\Th(\scrT)$, we have $\pr_i(\scrU)=\scrT_i\cap \scrU$, and there is a direct sum decomposition $\scrU=\pr_1(\scrU)\oplus\cdots\oplus \pr_n(\scrU)$. 
\item[$(2)$]
The map
\[
f\colon \Th(\scrT)\simto \Th(\scrT_1)\oplus\cdots\oplus \Th(\scrT_n)
\]
given by $f(\scrU)\defeq (\pr_1(\scrU),\hdots,\pr_n(\scrU))$ is order-preserving and bijective.

\end{itemize}
\begin{proof}
(1)  follows from Proposition \ref{prop:direct sum}, and (2) follows from By Proposition \ref{prop:orthogonal vee}.
\end{proof}
\end{prop}

\begin{dfn} A triangulated category $\scrT$ is  {\it simple} if $\scrT\neq0$ and $\Th(\scrT)=\{0, \scrT\}$, and it is {\it semi-simple} if $\scrT$ is the direct sum of finitely many simple thick subcategories.
\end{dfn}

In what follows, we provide examples of semi-simple triangulated categories. Let $X$ be a noetherian scheme. For a specialization-closed subset $W$ of $X$,   $\Perf_WX$ (resp. $\D^{\rm b}_W(X)$) denotes the thick subcategory of $\Perf X$ (resp. $\Db(X)$) consisting of objects supported on $W$. Here 
a subset $W\subset X$ of a topological space $X$ is said to be {\it specialization-closed} if it is the union of (possibly infinitely many) closed subsets of $X$. The following classifies thick subcategories of perfect complexes on a noetherian ring.

\begin{thm}[{\cite[Theorem 1.5]{neeman}}]\label{class ring} Let $R$ be a noetherian ring. The map
\[
f\colon\{W\subseteq \Spec R\mid \mbox{ $W$ is specialization-closed }\}\to \Th(\Perf R) 
\]
given by $f(W)\defeq \Perf_WX$ is an order-preserving bijection.
\end{thm}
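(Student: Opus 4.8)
The plan is to exhibit an explicit inverse to $f$ and check it is order-preserving. For a perfect complex $P$ over the Noetherian ring $R$, set $\Supp(P)=\{\p\in\Spec R\mid P\otimes^{\mathbf L}_R\kappa(\p)\neq 0\}$; since the modules $H^i(P)$ are finitely generated, $\Supp(P)=V\bigl(\bigcap_i\operatorname{ann}H^i(P)\bigr)$ is closed. Using $\Supp(A\oplus B)=\Supp(A)\cup\Supp(B)$ and $\Supp(\operatorname{cone}\phi)\subseteq\Supp(A)\cup\Supp(B)$ for $\phi\colon A\to B$, one first checks that $\Perf_WR$ is a thick subcategory for every specialization-closed $W$ and that $f$ is order-preserving. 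Then $f$ is injective, indeed an order-embedding: a prime $\p$ has a finite generating set (as $R$ is Noetherian), and the associated Koszul complex $K(\p)$ is perfect with $\Supp K(\p)=V(\p)$, so $\p\in W$ gives $K(\p)\in\Perf_WR$ (because $W$ is specialization-closed) while $\p\notin W'$ gives $K(\p)\notin\Perf_{W'}R$; hence $f(W)\subseteq f(W')$ forces $W\subseteq W'$. In particular $\bigcup_{P\in\Perf_WR}\Supp(P)=W$.

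Next I would prove surjectivity by producing the inverse $g$. For a thick subcategory $\cC\subseteq\Perf R$ put $g(\cC)\defeq\bigcup_{P\in\cC}\Supp(P)$, a specialization-closed set. One has $\cC\subseteq\Perf_{g(\cC)}R$ trivially, and $g(\Perf_WR)=W$ by the previous paragraph, so the only thing left is the inclusion $\Perf_{g(\cC)}R\subseteq\cC$. Fix $P\in\Perf_{g(\cC)}R$. Its support $V(I)$, with $I=\bigcap_i\operatorname{ann}H^i(P)$, is a Noetherian space and hence has finitely many minimal primes $\p_1,\dots,\p_r$; choosing $Q_j\in\cC$ with $\p_j\in\Supp(Q_j)$ and setting $Q\defeq\bigoplus_j Q_j\in\cC$, closedness of supports under specialization gives $\Supp(P)=\bigcup_j V(\p_j)\subseteq\Supp(Q)$. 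Thus the whole statement reduces to the key claim: \emph{for perfect complexes $P,Q$ over a Noetherian ring, $\Supp(P)\subseteq\Supp(Q)$ implies $P\in\lbr Q\rbr$.} Granting it we get $P\in\lbr Q\rbr\subseteq\cC$; so $f$ is a surjective order-embedding, i.e.\ an isomorphism of posets with inverse $g$.

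It remains to outline the key claim, which is the heart of the proof. First I would treat Koszul complexes. If $f\in R$ acts locally nilpotently on $H^*(P)$ --- equivalently $\Supp(P)\subseteq V(f)$ --- then $P\otimes^{\mathbf L}_R R[f^{-1}]=\operatorname{hocolim}\bigl(P\xrightarrow{f}P\xrightarrow{f}\cdots\bigr)$ has vanishing cohomology, hence is $0$ in $\D(R)$, and compactness of $P$ forces $f^N\colon P\to P$ to vanish in $\D(R)$ for some $N$. A repeated use of the octahedral axiom shows $K(f^N)\in\lbr K(f)\rbr$, whence $P\otimes^{\mathbf L}_R K(f^N)\in\lbr P\otimes^{\mathbf L}_R K(f)\rbr$; but $P\otimes^{\mathbf L}_R K(f^N)\cong\operatorname{cone}(f^N\colon P\to P)\cong P\oplus P[1]$, so $P\in\lbr P\otimes^{\mathbf L}_R K(f)\rbr$. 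Iterating over a generating set $f_1,\dots,f_n$ of an ideal $I$, applied to every $P$ with $\Supp(P)\subseteq V(I)$, and using that $-\otimes^{\mathbf L}_R N$ is exact (so $\lbr A\rbr\otimes^{\mathbf L}_R N\subseteq\lbr A\otimes^{\mathbf L}_R N\rbr$, and $P\otimes^{\mathbf L}_R K(I)\in\lbr K(I)\rbr$ because $P\in\Perf R=\lbr R\rbr$), one obtains $\lbr K(I)\rbr=\Perf_{V(I)}R$, where $K(I)$ is the Koszul complex on $f_1,\dots,f_n$. Taking $I$ with $V(I)=\Supp(Q)$ then turns the key claim into the statement $K(I)\in\lbr Q\rbr$ --- that is, the thick subcategory generated by a perfect complex is determined by its support. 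This last step is the genuinely deep input, namely the Hopkins--Neeman theorem, and I expect it to be the main obstacle; I would derive it from the tensor-nilpotence theorem for $\D(R)$ (a morphism of perfect complexes that vanishes after $-\otimes^{\mathbf L}_R\kappa(\p)$ for every $\p$ has a vanishing finite tensor power), or else via Bousfield localization in $\D(R)$ combined with the Neeman--Thomason principle that a thick subcategory of $\Perf R$ is recovered from its localizing closure. Everything else is routine bookkeeping with supports and Koszul complexes.
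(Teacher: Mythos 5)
The paper itself does not prove this result; it simply cites \cite[Theorem 1.5]{neeman} (Neeman's extension of the Hopkins theorem), so there is no ``paper's proof'' to compare against. Your outline is a correct reconstruction of the standard Hopkins--Neeman argument, and the routine parts are sound. In particular: injectivity and $g(\Perf_W R)=W$ via Koszul complexes on generating sets of primes are fine (using that $\Supp K(\p)=V(\p)$ and that specialization-closed $W$ contains $V(\p)$ whenever $\p\in W$); the reduction of surjectivity to ``$\Supp P\subseteq\Supp Q\Rightarrow P\in\lbr Q\rbr$'' via finitely many minimal primes of the Noetherian closed set $\Supp P$ is correct; and the Koszul-complex bootstrapping ($f^N=0$ on $P$ from compactness and $P[f^{-1}]\simeq 0$, the octahedron giving $K(f^N)\in\lbr K(f)\rbr$, $P\oplus P[1]\cong P\otimes K(f^N)$, iteration over generators of $I$, and finally $\lbr K(I)\rbr=\Perf_{V(I)}R$) is exactly the argument in Hopkins and in Neeman.

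One small caveat worth flagging: the reduction from ``$\Supp P\subseteq\Supp Q\Rightarrow P\in\lbr Q\rbr$'' to ``$K(I)\in\lbr Q\rbr$ when $V(I)=\Supp Q$'' is logically correct but does not simplify the remaining difficulty --- both are essentially the same statement, and both ultimately rest on the tensor-nilpotence theorem (a map of perfect complexes vanishing after $-\otimes^{\mathbf L}_R\kappa(\p)$ for all $\p$ has a nilpotent tensor power), which you correctly isolate as the genuinely deep input. Everything else in your sketch is routine bookkeeping, as you say. Since the paper only cites this theorem, your outline is a reasonable alternative to simply invoking the reference, though for a clean citation-free proof you would still need to write out the tensor-nilpotence theorem and the deduction of ``support containment implies thick containment'' from it.
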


In order to provide examples of semi-simple triangulated categories, we need the following lemmas.

\begin{lem}[{\cite{balmer3}}]\label{disjoint union} Let $X$ be a noetherian separated  scheme, and let $Z_1$ and $Z_2$ be two  disjoint closed subsets in $X$. Then $\Perf_{Z_1\sqcup Z_2}X=\Perf_{Z_1}X\oplus \Perf_{Z_2}X$. 
\begin{proof}
The orthogonality follows from \cite[Proposition 4.1, Corollary 2.8]{balmer3}, and the direct sum follows from \cite[Theorem 2.11]{balmer3}.
\end{proof}
\end{lem}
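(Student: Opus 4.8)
The plan is to establish the disjoint-union decomposition by exhibiting the orthogonality and the direct-sum property separately, then invoke Proposition \ref{prop:orthogonal vee} to conclude. Write $Z = Z_1 \sqcup Z_2$ with $Z_1, Z_2$ closed in $X$. First I would observe that, since $X$ is separated and noetherian, $\Perf X$ is a tensor-triangulated category with the derived tensor product $\otimes_X$, and $\Perf_{Z_i}X$ is the $\otimes$-ideal of objects supported on $Z_i$. For the \emph{orthogonality}, given $A \in \Perf_{Z_1}X$ and $B \in \Perf_{Z_2}X$, their derived tensor product is supported on $Z_1 \cap Z_2 = \emptyset$, hence $A \otimes_X B \cong 0$; a standard argument in tensor-triangular geometry (cited as \cite[Proposition 4.1, Corollary 2.8]{balmer3}) then upgrades this to $\Hom(A, B[n]) = 0$ for all $n$, using that $\Perf X$ is rigid so that $\Hom(A,B)$ is computed by global sections of $A^\vee \otimes_X B$, which vanishes as it is supported on the empty set. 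This gives $\Perf_{Z_1}X \perp \Perf_{Z_2}X$.

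Next I would prove the \emph{direct-sum} property: every object $E \in \Perf_Z X$ decomposes as $E \cong E_1 \oplus E_2$ with $E_i \in \Perf_{Z_i}X$. The input here is \cite[Theorem 2.11]{balmer3} (the gluing/"Mayer--Vietoris" statement for supports), which says precisely that $\Perf_Z X$ is the direct sum of $\Perf_{Z_1}X$ and $\Perf_{Z_2}X$ when $Z_1, Z_2$ are disjoint closed pieces of the support. Concretely, one uses that the disjoint decomposition $Z = Z_1 \sqcup Z_2$ induces, after restricting to a neighborhood, a decomposition of the structure sheaf into orthogonal idempotents supported near $Z_1$ and $Z_2$ respectively, and tensoring $E$ with these idempotents splits $E$.

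Finally, having verified $\Perf_{Z_1}X \perp \Perf_{Z_2}X$ together with the fact that every object of $\Perf_Z X$ is a direct sum of objects from the two subcategories, Proposition \ref{prop:orthogonal vee} (applied with $n=2$ inside $\scrT = \Perf_Z X$) yields $\Perf_Z X = \Perf_{Z_1}X \oplus \Perf_{Z_2}X$, which is the assertion. The main obstacle is simply making sure the two citations to \cite{balmer3} are applied in the correct form: the orthogonality statement must be phrased so that disjointness of supports in a rigid tensor-triangulated category forces vanishing of all $\Hom$-groups (not merely of the tensor product), and the direct-sum statement must be the version that decomposes objects rather than merely the lattice of thick subcategories. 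Both are routine once the separated noetherian hypothesis is in place, so the proof is short; the content is entirely in correctly invoking Balmer's results.
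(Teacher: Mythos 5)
Your proof is correct and follows essentially the same route as the paper, which simply cites \cite[Proposition 4.1, Corollary 2.8]{balmer3} for orthogonality and \cite[Theorem 2.11]{balmer3} for the object-level decomposition --- exactly the two facts you establish. Two small remarks: the closing appeal to Proposition~\ref{prop:orthogonal vee} is redundant, since the paper's definition of $\scrU_1\oplus\scrU_2$ is precisely the conjunction of the two properties you have already verified (orthogonality plus every object being a direct sum); and your parenthetical sketch of the mechanism behind Balmer's Theorem~2.11 (a decomposition of the structure sheaf into orthogonal idempotents near $Z_1$ and $Z_2$) is not quite what happens --- the structure sheaf of a connected scheme has no nontrivial idempotents --- but as you are citing the theorem rather than reproving it, this does not affect the argument.
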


The first assertion of the following result is due to Balmer \cite[Theorem 2.13]{balmer2}, and the second one follows from the first one and Proposition \ref{dense}\,(2) due to \cite[Proposition 2.11\,(1)]{{matsui}}.

\begin{lem}[{\cite{balmer2,matsui}}]\label{open rest} Let $X$ be a noetherian   scheme, and let $U\subseteq X$ be an open subset. Then the restriction functor $(-)|_U\colon \Perf X\to \Perf U$ induces a fully faithful dense functor
\[
(-)|_U\colon\Perf X/\Perf_{X\backslash U}X\hookto \Perf U.
\]
In particular, there is a bijection $\Th(\Perf X/\Perf_{X\backslash U}X)\cong \Th(\Perf U)$.
\end{lem}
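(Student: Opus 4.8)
The plan is to obtain the lemma by combining two ingredients: the Thomason--Neeman localization theorem for perfect complexes (in the form packaged by Balmer) for the first assertion, and Proposition \ref{dense}\,(2) for the second.

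First I would record the content of the first assertion. The restriction functor $(-)|_U\colon \Perf X\to \Perf U$ has kernel exactly $\Perf_{X\backslash U}X$: for $X$ noetherian an object of $\Perf X$ restricts to zero on $U$ precisely when its support is contained in the closed complement $X\backslash U$. Hence the functor descends to a well-defined exact functor
\[
\bar{F}\colon \Perf X/\Perf_{X\backslash U}X\hookto \Perf U,
\]
and the assertion that $\bar F$ is fully faithful with dense image is exactly \cite[Theorem 2.13]{balmer2} (ultimately a reformulation of the Thomason--Trobaugh/Neeman localization theorem): full faithfulness is the localization statement, and density is the fact that every perfect complex on $U$ is a direct summand of the restriction of a perfect complex on $X$. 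No monoidal structure is needed for the form of the statement we use, and $X$ need not be separated here.

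Given this, the second assertion is immediate: $\bar F$ is a fully faithful dense functor, so Proposition \ref{dense}\,(2) applies directly and shows that
\[
\bar{F}^{-1}\colon \Th(\Perf U)\to \Th\bigl(\Perf X/\Perf_{X\backslash U}X\bigr)
\]
is an order-preserving bijection, with inverse $\lbr \bar F\rbr$. In particular $\Th(\Perf X/\Perf_{X\backslash U}X)\cong \Th(\Perf U)$, which is what we want.

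The main (and essentially only) point requiring care is matching the notion of density in the cited theorem with the one used in Proposition \ref{dense}: we need that every object of $\Perf U$ is a direct summand of $\bar F(A)$ for some $A$, and since $\Perf U$ is closed under shifts this coincides with density "up to summands and shifts", which is the statement supplied by \cite[Theorem 2.13]{balmer2}. Beyond that, the proof is a routine bookkeeping combination of a known localization theorem with the abstract lattice-theoretic Proposition \ref{dense}.
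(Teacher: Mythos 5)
Your proposal is correct and takes essentially the same route as the paper: the paper also gives no standalone proof, instead citing \cite[Theorem 2.13]{balmer2} for the fully faithful dense functor and then invoking Proposition~\ref{dense}\,(2) (due to \cite{matsui}) for the bijection of thick‑subcategory lattices. Your added remarks on the kernel being $\Perf_{X\backslash U}X$ and on matching the two notions of density are accurate and harmless elaborations of the same argument.
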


The following result shows that $\Perf X$ for a noetherian separated scheme $X$ always contains  a semi-simple thick subcategory.

\begin{prop}\label{prop:torsion simple}
Let $X$ be a noetherian separated scheme.

\begin{itemize}  
\item[$(1)$] For a closed point $p\in X$, the thick subcategory $\Perf_p X$  is simple.

\item[$(2)$] For a finite set $Z=\{p_1,\hdots,p_n\}\subseteq X$ of $n$-distinct closed points,  $\Perf_ZX=\Perf_{p_1}X\oplus\cdots\oplus \Perf_{p_n}X$ holds, and in particular $\Perf_ZX$ is semi-simple. 
\end{itemize}
\begin{proof}
(1)  Take a non-zero  thick subcategory $\scrS\neq0$ of $\Perf_pX$. Let $p\in U=\Spec R$ be an open affine neighborhood of $p$, and set $Z\defeq X\backslash U$. By Lemma \ref{disjoint union},  $\Perf_pX\perp \Perf_ZX$ holds, and so $\Perf_pX\vee \Perf_ZX\cong \Perf_pX\oplus \Perf_ZX$ and $\scrS\vee \Perf_ZX=\scrS\oplus \Perf_ZX$. By Proposition \ref{verdier}, Proposition \ref{dense} and Lemma \ref{open rest}, there is a bijective map
\[
\Phi\colon \{\scrV\in \Th(\Perf X)\mid \Perf_ZX\subseteq \scrV\}\simto \Th(\Perf U)
\]
given by 
\[
\Phi(\scrV)=\{A\in \Perf U\mid \exists B\in \Perf U \mbox{ such that } A\oplus B\in (\scrV/\Perf_ZX)|_U\}.
\]
It is easy to see that $\Phi(\Perf_pX\oplus \Perf_ZX)\subseteq \Perf_pU$. Since the bijection $\Phi$ is order-preserving, the inclusions
\[
0\neq \Phi(\scrS\oplus \Perf_ZX)\subseteq \Phi(\Perf_pX\oplus \Perf_ZX)\subseteq \Perf_pU
\]
hold. By Theorem \ref{class ring}, $\Perf_pU$ is simple. Hence $\scrS\oplus \Perf_ZX=\Perf_pX\oplus \Perf_Z X$ holds, and by taking Verdier quotients by $\Perf_ZX$, we obtain $\scrS=\Perf_pX$ by \eqref{direct sum quotient}. This shows that $\Perf_pX$ is simple.\\
(2) The first assertion follows from Lemma \ref{disjoint union}, and the second one follows from the first one and (1).
\end{proof}
\end{prop}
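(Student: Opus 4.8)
The statement to prove is Proposition \ref{prop:torsion simple}, whose two parts I address in turn.

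\textbf{Part (1): $\Perf_pX$ is simple for a closed point $p$.} The plan is to reduce to the affine local case, where Neeman's classification (Theorem \ref{class ring}) applies. First I would choose an open affine neighborhood $U=\Spec R$ of $p$ and set $Z\defeq X\setminus U$, a closed subset disjoint from $\{p\}$. Since $X$ is separated, Lemma \ref{disjoint union} gives the orthogonal decomposition $\Perf_{\{p\}\sqcup Z}X=\Perf_pX\oplus\Perf_ZX$; in particular for any thick $\scrS\subseteq\Perf_pX$ we have $\scrS\vee\Perf_ZX=\scrS\oplus\Perf_ZX$ by Proposition \ref{prop:orthogonal vee}. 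Next I would invoke the chain of bijections: Proposition \ref{verdier} identifies the interval $\{\scrV\in\Th(\Perf X)\mid \Perf_ZX\subseteq\scrV\}$ with $\Th(\Perf X/\Perf_ZX)$, and Lemma \ref{open rest} together with Proposition \ref{dense}(2) identifies the latter with $\Th(\Perf U)$, via an explicit order-preserving bijection $\Phi$. One then checks directly that $\Phi$ sends $\Perf_pX\oplus\Perf_ZX$ into $\Perf_pU$ (objects supported at $p$ restrict to objects supported at $p$). So a nonzero thick subcategory $\scrS\subseteq\Perf_pX$ would give, under $\Phi$, a nonzero thick subcategory of $\Perf_pU$; but $\Perf_pU=\Perf_{\{p\}}U$ corresponds under Theorem \ref{class ring} to the specialization-closed set $\{p\}$ (a single closed point, hence minimal nonzero), so $\Perf_pU$ is simple. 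Therefore $\Phi(\scrS\oplus\Perf_ZX)=\Phi(\Perf_pX\oplus\Perf_ZX)$, hence $\scrS\oplus\Perf_ZX=\Perf_pX\oplus\Perf_ZX$, and quotienting by $\Perf_ZX$ using \eqref{direct sum quotient} yields $\scrS=\Perf_pX$.

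\textbf{Part (2): $\Perf_ZX=\bigoplus_i\Perf_{p_i}X$ is semi-simple.} This is immediate: $Z=\{p_1\}\sqcup\cdots\sqcup\{p_n\}$ is a disjoint union of closed subsets, so iterating Lemma \ref{disjoint union} gives $\Perf_ZX=\Perf_{p_1}X\oplus\cdots\oplus\Perf_{p_n}X$, and each summand is simple by part (1); hence $\Perf_ZX$ is a finite direct sum of simple thick subcategories, i.e.\ semi-simple by definition.

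\textbf{Main obstacle.} The only delicate point is verifying that the bijection $\Phi$ of Lemma \ref{open rest} really does restrict to a bijection between the interval above $\Perf_pX\oplus\Perf_ZX$ and $\Th(\Perf_pU)$ — more precisely, that $\Phi(\Perf_pX\oplus\Perf_ZX)=\Perf_pU$ (not merely $\subseteq$), so that $\Perf_pU$ being simple forces $\Perf_pX$ to be simple. This requires unwinding that restriction to $U$ kills exactly the objects supported on $Z$ and that support at $p$ is preserved both ways; it follows from the compatibility of supports with the localization sequence $\Perf_ZX\to\Perf X\to\Perf U$, but it is the step that needs care. Everything else is a formal manipulation of the lattice isomorphisms already established in the section.
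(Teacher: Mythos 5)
Your proof is correct and follows essentially the same route as the paper: reduce to an affine neighborhood $U$ of $p$ via the lattice isomorphism between the interval above $\Perf_ZX$ and $\Th(\Perf U)$, then invoke Neeman's classification to see that $\Perf_pU$ is simple. The one thing worth flagging is that the ``main obstacle'' you raise at the end is not actually an obstacle. You do \emph{not} need to verify the equality $\Phi(\Perf_pX\oplus\Perf_ZX)=\Perf_pU$ as an input to the argument. The inclusion $\Phi(\Perf_pX\oplus\Perf_ZX)\subseteq\Perf_pU$ is elementary (restriction to $U$ can only shrink supports, and $(\{p\}\sqcup Z)\cap U=\{p\}$), and that inclusion is all that is used: since $\Phi$ is order-preserving and injective, $0\neq\Phi(\scrS\oplus\Perf_ZX)\subseteq\Phi(\Perf_pX\oplus\Perf_ZX)\subseteq\Perf_pU$, and simplicity of $\Perf_pU$ forces the nonzero thick subcategory $\Phi(\scrS\oplus\Perf_ZX)$ to be all of $\Perf_pU$; the sandwich then yields $\Phi(\scrS\oplus\Perf_ZX)=\Phi(\Perf_pX\oplus\Perf_ZX)$ and injectivity of $\Phi$ finishes the argument. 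The equality $\Phi(\Perf_pX\oplus\Perf_ZX)=\Perf_pU$ thus comes out as a byproduct rather than something to be checked beforehand, and indeed the paper's proof records only the inclusion. The body of your argument already uses exactly this logic, so no gap needs to be filled.
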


In the rest of this subsection, we assume that  for two objects $A,B\in\scrT$, the $k$-vector space $\Hom(A,B)$ is finite-dimensional. Furthermore, we assume that $\scrT$ admits a dg-enhancement and that it is idempotent complete. In this setting, we prove the simplicity of a thick subcategory generated by a bouquet sphere-like object. 

\begin{dfn}\label{dfn:bouquet}
Let $S\in \scrT$, $n>0$ and $d\neq0\in\bZ$. We say that $S$ is {\it $n$-bouquet $d$-sphere-like} if equations 
\[
\Hom(S,S[i])=
\begin{cases}
k & i=0\\
0 & i\neq0,d\\
k^n &i=d
\end{cases}
\]
hold. A $1$-bouquet $d$-sphere-like object is called a  {\it $d$-sphere-like} object. If we do not need to specify the integers, we just call it a {\it bouquet sphere-like} object. 
\end{dfn}

\begin{rem}\label{rem:bouquet}
Note that, if $d>0$, the graded vector space $\Hom(B,B[*])$ of an $n$-bouquet $d$-sphere-like object $B$ is isomorphic to the total singular cohomology $H^*(\vee_n S^d,k)$ of the bouquet of $n$ spheres $S^d$ with coefficients $k$. 
\end{rem}

\begin{prop}\label{prop:bouquet}
 Let $S\in \scrT$ be a bouquet sphere-like object. The thick subcategory $\lbr S\rbr$ generated by $S$ is simple.
\begin{proof}
Consider the graded endomorphism algebra \[B\defeq \bigoplus_{m\in\bZ}\Hom(S,S[m])\] of $S$.
Denote by $A\defeq\REnd(S)$ the dg-endomorphism algebra of $S$ -- it exists since we assume that $\scrT$ admits a dg-enhancement. Then there is an equivalence $\lbr S\rbr\cong \Perf A$ by Theorem \ref{thm:keller equiv}\,(1).
The cohomology algebra $H^*(A)$ is isomorphic to $B$ as graded $k$-algebras. By assumption on $S$, the algebra $B$ is concentrated in degrees $0$ and $d \neq 0$. It follows as in \cite[Theorem 2.1]{kyz} that $A$ is quasi-isomorphic to $B$.
Hence, there is a sequence of equivalences
\[
\lbr S\rbr\cong \Perf A\cong \Perf B.
\]
Since the graded ring $B$ is commutative, connected, and finite-dimensional over $k$, there is a unique homogeneous prime ideal of $B$. Thus, $\Perf B$ is simple by \cite[Theorem A.2]{bw}, and so is $\lbr S\rbr$.
\end{proof}
\end{prop}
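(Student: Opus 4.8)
The plan is to reduce the statement to a classification of thick subcategories over an explicit finite-dimensional local algebra. First, using that $\scrT$ is idempotent complete and carries a dg-enhancement, I would invoke the standard Morita-type description of algebraic triangulated categories to identify $\lbr S\rbr$ with $\Perf A$, where $A\defeq\REnd(S)$ is the dg-endomorphism algebra of $S$. Its cohomology is the graded endomorphism algebra $B\defeq\bigoplus_{m\in\bZ}\Hom(S,S[m])$, and I would then read off from the bouquet $d$-sphere-like hypothesis that $B$ is concentrated in degrees $0$ and $d$, with $B^0=k$ and $B^d=k^n$; since $d\neq 0$ forces $B^{2d}=0$, the product on $B^d$ vanishes, so $B\cong k\ltimes k^n$, a trivial square-zero extension with $k^n$ sitting in degree $d$. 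In particular, $B$ is a connected, commutative, finite-dimensional graded $k$-algebra whose only homogeneous prime is its maximal ideal.

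The crucial step is to promote the identification of cohomology to a formality statement, namely $A\simeq B$ as dg-algebras. For this I would apply the formality criterion proved in the appendix (Proposition \ref{prop:formal 1}): it applies here precisely because $H^*(A)\cong B$ is supported in only two cohomological degrees, which forces the higher $A_\infty$-structure (equivalently, the relevant Hochschild cohomology obstructions) to vanish. Granting formality, one obtains a chain of equivalences $\lbr S\rbr\cong \Perf A\cong \Perf B$.

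It then remains to check that $\Perf B$ is simple, which is now purely commutative-algebraic. Since $B$ is a finite-dimensional commutative local $k$-algebra, its maximal ideal is the nilradical and $\Spec B$ is a single point, so the only specialization-closed subsets of $\Spec B$ are $\emptyset$ and the whole space. Hence, by Neeman's classification (Theorem \ref{class ring}) --- or, keeping track of the grading, by \cite[Theorem A.2]{bw} together with the fact that $B$ has a unique homogeneous prime --- the lattice $\Th(\Perf B)$ equals $\{0,\Perf B\}$; since $B\neq 0$ this is a genuine two-element lattice, so $\Perf B$, and therefore $\lbr S\rbr$, is simple.

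The hard part is the formality step: the rest is bookkeeping plus quoting known classification theorems, but establishing $\REnd(S)\simeq B$ requires real input and is the reason for the appendix. A point to watch there is that $d$ is allowed to be negative, so the two-degree formality argument must not rely on any positivity of the internal grading.
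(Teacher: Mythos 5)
Your proof follows the paper's approach exactly: identify $\lbr S\rbr\cong\Perf\REnd(S)$, establish formality via Proposition~\ref{prop:formal 1} (and your observation that $d\neq 0$ forces the square-zero structure on $B$ matches what that proposition relies on), and then conclude simplicity of $\Perf B$ from uniqueness of the homogeneous prime. The only minor caveat is that Neeman's Theorem~\ref{class ring} concerns perfect complexes over an \emph{ungraded} noetherian ring and does not directly apply to $\Perf B$ when $B$ is viewed as a dg-algebra (the shift functor is different), so only your second option, \cite[Theorem A.2]{bw} in the graded/dg setting, actually closes the argument---which is also the reference the paper uses.
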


\vspace{2mm}
\subsection{Prime and maximal thick subcategories}~

In this section, we recall the definitions and fundamental properties of prime thick subcategories introduced by Matsui \cite{matsui, matsui2}.

For a thick subcategory $\scrP$ of $\scrT$ we define
\begin{align*}
\Th(\scrT)_{>\scrP}&\defeq\{\scrQ\in \Th(\scrT)\mid \scrP\subsetneq\scrQ\}\subsetneq \Th(\scrT)\\
\Th(\scrT)_{\geq\scrP}&\defeq\{\scrQ\in \Th(\scrT)\mid \scrP\subseteq\scrQ\}\subseteq \Th(\scrT)
\end{align*}

\begin{dfn}[\cite{matsui2}]\label{dfn:prime}
A thick subcategory $\scrP$ is {\it prime} if the set $\Th(\scrT)_{>\scrP}$ has a minimum.  We denote by $\spec\scrT$ the set of prime thick subcategories of $\scrT$, and call it the {\it Matsui spectrum} of $\scrT$. 
\end{dfn}

For $\scrP\in\spec\scrT$, we call the minimum in $\Th(\scrT)_{>\scrP}$ the {\it cover} of $\scrP$, and denote it by $\overline{\scrP}$.
Note that $\overline{\scrP}=\bigcap_{\scrQ\in\Th(\scrT)_{>\scrP}}\scrQ$.

\begin{dfn} A thick subcategory $\scrM\in\Th(\scrT)$ is said to be {\it maximal} if the quotient $\scrT/\scrM$ is simple. We denote by $\Max(\scrT)$ the set of maximal thick subcategories of $\scrT$.
\end{dfn}

Note that $\scrM\in\Th(\scrT)$ is maximal if and only if  any $\scrU\in\Th(\scrT)$ with $\scrM\subseteq \scrU\subseteq\scrT$ must be  either $\scrM$ or $\scrT$. Thus for $\scrM\in\Max(\scrT)$,  $\Th(\scrT)_{>\scrM}=\{\scrT\}$ holds, and in particular $\Max(\scrT)\subseteq \spec\scrT$. The following ensures the existence of a maximal thick subcategory in a triangulated category with a split generator.

\begin{prop}\label{prop:matsui spec nonempty}
Assume that $\scrT\neq 0$ admits a split generator $G\in\scrT$. For any thick subcategory $\scrN\subsetneq\scrT$, there is a maximal thick subcategory $\scrM$  of $\scrT$ such that  $\scrN\subseteq \scrM$. In particular, $\spec\scrT\neq \emptyset$.
\begin{proof}
Set $\Sigma\defeq \Th(\scrT)_{\geq \scrN}\backslash \{\scrT\}$.
By Zorn's lemma, it suffices to show that for any non-empty totally ordered subset $\Lambda\subseteq \Sigma$, there is an upper bound of $\Lambda$ in $\Sigma$. Since $\Th(\scrT)$ is a complete lattice, there exists the supremum $\vee_{\scrU\in \Sigma}\scrU$ of $\Sigma$ in $\Th(\scrT)$. We claim that $\vee_{\scrU\in \Sigma}\scrU\in\Sigma$. It is obvious that $\vee_{\scrU\in \Sigma}\scrU\in\Th(\scrT)_{\geq \scrN}$. Since $\Sigma$ is tortally ordered, $\vee_{\scrU\in \Sigma}\scrU=\cup_{\scrU\in \Sigma}\scrU$ holds. This implies that $\vee_{\scrU\in \Sigma}\scrU\neq \scrT$, since $\scrT$ has a split generator. Thus $\vee_{\scrU\in \Sigma}\scrU\in \Sigma$.
\end{proof}
\end{prop}

\begin{rem}
By \cite[Corollary 2.10]{matsukawa}, $\spec\scrT \neq \emptyset$ for arbitrary triangulated categories $\scrT\neq 0$.
\end{rem}

Let $X$ be a smooth projective variety. We say that a non-zero admissible subcategory $\scrA$ of $\Db(X)$ is  a {\it phantom subcategory} if ${\rm HH}_*(\scrA)=K_0(\scrA)=0$. The following shows that the Matsui spectrum of a phantom subcategory is not empty.

\begin{cor}\label{cor:matsui spec admissible}
Let $X$ be a smooth projective variety, and let $\scrA$ be a non-zero admissible  subcategory of $\Db(X)$. Then $\spec\scrA\neq \emptyset$.
\begin{proof}
Since  $\Db(X)$ admits a split generator \cite{bv,rou}, so does $\Db(X)/\scrA^{\perp}\cong \scrA$. Therefore the result follows from Proposition \ref{prop:matsui spec nonempty}.
\end{proof}
\end{cor}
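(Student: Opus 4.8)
The plan is to deduce the corollary directly from Proposition~\ref{prop:matsui spec nonempty} by exhibiting a split generator for the admissible subcategory $\scrA$. First I would recall the theorem of Bondal--Van den Bergh \cite{bv} (or Rouquier \cite{rou}): since $X$ is a smooth projective variety, $\Db(X)$ has a classical (strong) generator, and in particular a split generator $G$. Then I would invoke the fact that $\scrA$, being admissible in $\Db(X)$, arises from a semi-orthogonal decomposition $\Db(X)=\langle \scrA^{\perp},\scrA\rangle$ (using right admissibility), so that the quotient functor induces an exact equivalence $\Db(X)/\scrA^{\perp}\simto \scrA$.

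Next I would transport the split generator across this equivalence: if $Q\colon \Db(X)\to \Db(X)/\scrA^{\perp}$ is the Verdier quotient functor, then $Q(G)$ is a split generator of $\Db(X)/\scrA^{\perp}$, because $Q$ is essentially surjective and sends the thick subcategory $\langle G\rangle=\Db(X)$ onto a thick subcategory whose closure is everything --- concretely, $\langle Q(G)\rangle \supseteq \langle Q(G)\rangle = Q(\langle G\rangle)$'s thick closure $=\Db(X)/\scrA^{\perp}$, using that thick subcategories of the quotient correspond to thick subcategories of $\Db(X)$ containing $\scrA^{\perp}$ (Proposition~\ref{verdier}) and the only such one containing $G$ is the whole category. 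Composing with the equivalence $\Db(X)/\scrA^{\perp}\cong \scrA$, the image of $G$ becomes a split generator of $\scrA$. Hence $\scrA\neq 0$ is a triangulated category with a split generator, and Proposition~\ref{prop:matsui spec nonempty} yields $\spec\scrA\neq\emptyset$.

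I do not expect any serious obstacle here; the statement is essentially a packaging of the existence of generators for $\Db(X)$ together with the elementary fact that split generators descend along Verdier quotients and are preserved by equivalences. The only mild point of care is to make sure $\scrA^{\perp}$ is the correct orthogonal complement (right versus left), i.e.\ that admissibility of $\scrA$ gives the semi-orthogonal decomposition with $\scrA$ the right piece so that $\Db(X)/\scrA^{\perp}\cong\scrA$ rather than $\Db(X)/{}^{\perp}\!\scrA\cong\scrA$; both work, and one simply picks the convention matching the notation in the statement. Everything else is formal.
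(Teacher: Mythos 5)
Your argument is correct and follows the same route as the paper: take the Bondal--Van den Bergh/Rouquier split generator of $\Db(X)$, pass it through the quotient $\Db(X)\to\Db(X)/\scrA^{\perp}\cong\scrA$, and invoke Proposition~\ref{prop:matsui spec nonempty}. You have merely spelled out, via Proposition~\ref{verdier}, the small point that split generators descend along Verdier quotients, which the paper leaves implicit.
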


\noindent
Let $X$ be a noetherian scheme. A thick subcategory $\scrI$ of $\Perf X$ is an {\it ideal} if for objects $A\in\Perf X$ and $I\in\scrI$, $A\otimes_XI\in\scrI$ holds. An ideal $\scrP$ of $\Perf X$ is said to be {\it prime} if for objects $A,B\in\Perf X$ the condition $A\otimes_X B \in \scrP$ implies that $A\in\scrP$ or $B\in\scrP$. We denote by $\Spec_{\otimes}\Perf X$ the set of prime ideals of $\Perf X$. For every point $x\in X$, consider the full subcategory given by
\[
\scrS_X(x)\defeq\{F\in\Perf X\mid x\not\in\Supp(F)\}.
\]

\begin{thm}\cite{balmer}\label{thm:balmer}
For every $x\in X$, $\scrS_X(x)$ is a prime ideal. Moreover, the assignment $x\mapsto \scrS_X(x)$ defines a bijective map $X\simto \Spec_{\otimes} \Perf X$. 
\end{thm}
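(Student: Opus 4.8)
The plan is to deduce the statement from Thomason's classification of thick tensor ideals of $\Perf X$ \cite{thomason} (which in the affine case goes back to \cite{neeman}), together with the fact that schemes are sober topological spaces. The main tool is the homological support: for $F\in\Perf X$ the set $\Supp F=\bigcup_i\Supp\cH^i(F)$ is closed in $X$ (as $X$ is noetherian), it is shift-invariant, it satisfies $\Supp(\mathrm{cone}(f))\subseteq\Supp A\cup\Supp B$ for $f\colon A\to B$ and $\Supp(A\oplus B)=\Supp A\cup\Supp B$, and, crucially, $\Supp(A\otimes_X B)=\Supp A\cap\Supp B$. Only the inclusion $\supseteq$ here is nonformal; I would establish it by passing to the stalk at a point $x\in\Supp A\cap\Supp B$, using that for a perfect complex $P$ one has $x\in\Supp P$ if and only if $P\otimes^{\mathbf{L}}_{\cO_{X,x}}\kappa(x)\neq 0$ (Nakayama for perfect complexes), and then invoking the Künneth formula over the field $\kappa(x)$ to see that $(A\otimes_X B)_x\otimes^{\mathbf{L}}\kappa(x)$ is a nonzero tensor product of two nonzero bounded complexes of $\kappa(x)$-vector spaces.

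Granting these properties, the first assertion is formal: $\scrS_X(x)=\{F:x\notin\Supp F\}$ is thick by the triangle, shift and summand behaviour of $\Supp$; it is a tensor ideal since $\Supp(A\otimes_X F)\subseteq\Supp F$; it is proper because $\Supp\cO_X=X\ni x$; and it is prime because $A\otimes_X B\in\scrS_X(x)$ forces $x\notin\Supp A\cap\Supp B$, hence $x\notin\Supp A$ or $x\notin\Supp B$. For injectivity, given $x\neq y$ I may assume $x\notin\overline{\{y\}}$ and pick $F\in\Perf X$ with $\Supp F=\overline{\{y\}}$; such an $F$ exists because the classification gives that the union of the supports of the objects in the ideal attached to $\overline{\{y\}}$ is all of $\overline{\{y\}}$, so a finite direct sum of objects of that ideal hitting the (finitely many) generic points of $\overline{\{y\}}$ does the job. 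Then $F\in\scrS_X(x)\setminus\scrS_X(y)$.

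The real content is surjectivity. Let $\scrP\subsetneq\Perf X$ be a prime tensor ideal. As $X$ is noetherian, it is quasi-compact and quasi-separated, so by \cite{thomason} we have $\scrP=\{F:\Supp F\subseteq Y\}$ where $Y:=\bigcup_{F\in\scrP}\Supp F$ is a specialization-closed subset, and $Y\neq X$ because $\scrP$ is proper. I would then show that the family $\cU:=\{U\subseteq X\text{ open}:X\setminus Y\subseteq U\}$ is a completely prime filter of open sets: it is a proper filter since $Y\neq X$, and primeness of $\scrP$, combined with the tensor formula for supports and the existence of perfect complexes with prescribed closed support, shows that whenever $X\setminus Y\subseteq U_1\cup U_2$ for open $U_1,U_2$ then $X\setminus Y\subseteq U_1$ or $X\setminus Y\subseteq U_2$; complete primeness follows because $X\setminus Y$ is quasi-compact. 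Since $X$ is sober, $\cU=\{U:x\in U\}$ for a unique point $x\in X$, and unwinding the definitions one finds that $X\setminus Y=\bigcap_{U\in\cU}U$ is exactly the set of generizations of $x$, so that $Y=\{z\in X:x\notin\overline{\{z\}}\}$. Since $\Supp F$ is closed, $\Supp F\subseteq Y$ holds precisely when $x\notin\Supp F$; hence $\scrP=\scrS_X(x)$, which also re-proves injectivity.

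I expect surjectivity to be the main obstacle, and within it the genuinely external input is Thomason's classification of thick tensor ideals; once that is in hand, extracting the point $x$ from $\scrP$ is a point-set argument resting only on quasi-compactness of the noetherian scheme $X$ and on sobriety of schemes. On the algebraic side, the one nonformal ingredient is the tensor formula $\Supp(A\otimes_X B)=\Supp A\cap\Supp B$, which ultimately reduces to the Künneth formula over a field together with Nakayama for perfect complexes.
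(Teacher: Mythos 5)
The paper gives no proof of this theorem—it is imported wholesale from Balmer \cite{balmer}, where it appears (for general quasi-compact quasi-separated $X$, as a homeomorphism of spectral spaces) as part of the agreement between the Balmer spectrum of $\Perf X$ and $X$ itself. Your argument is correct and follows essentially the same route as Balmer's: one reduces the bijectivity to Thomason's classification of thick $\otimes$-ideals \cite{thomason}, and the surjectivity step is a point-set argument extracting a point from the specialization-closed set $Y=\bigcup_{F\in\scrP}\Supp F$ using sobriety. Two small remarks. First, your argument leans on the noetherian hypothesis in two places—identifying Thomason subsets with specialization-closed subsets, and deducing quasi-compactness of $X\setminus Y$ from noetherianity of the ambient space—both legitimate here since $X$ is assumed noetherian in the surrounding subsection (Balmer's argument in the qcqs setting instead works with Thomason subsets, whose complements are by definition covered by quasi-compact opens). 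Second, the existence of a perfect complex with support exactly a given closed $Z\subset X$ is cleanest to cite as a lemma from Thomason's proof rather than extracting it a posteriori from the classification statement; either way, it is available and your use of it is sound. The tensor formula $\Supp(A\otimes_X B)=\Supp A\cap\Supp B$ via Nakayama for perfect complexes plus K\"unneth over $\kappa(x)$ is the standard argument. Overall this is a faithful reconstruction of the proof the cited theorem rests on, not a genuinely different route.
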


The following shows that prime thick subcategories are generalizations of prime ideals. 

\begin{thm}[{\cite[Corollary 4.9]{matsui}}]\label{thm:prime ideal} Let $\scrP$ be an ideal of $\Perf X$. Then $\scrP$ is a prime ideal if and only if it is a prime thick subcategory.
\end{thm}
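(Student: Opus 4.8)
The plan is to combine Balmer's reconstruction theorem (Theorem~\ref{thm:balmer}) with the classification of thick subcategories (Theorem~\ref{class ring}, and its tensor version \cite{thomason}) and the localization statements in Proposition~\ref{verdier}, Proposition~\ref{dense} and Lemma~\ref{open rest}. Two elementary facts will be used repeatedly: if $\scrP=\Perf_{W}X$ is a thick $\otimes_X$-ideal with $W\subseteq X$ specialization-closed, then $A\otimes_X B\in\scrP$ if and only if $\Supp(A\otimes_X B)\subseteq W$, and $\Supp(A\otimes_X B)=\Supp A\cap\Supp B$ since $A$ and $B$ are perfect; and $\Perf_{W_1}X\cap\Perf_{W_2}X=\Perf_{W_1\cap W_2}X$.

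\emph{Prime thick subcategory $\Rightarrow$ prime ideal.} Assume $\scrP$ is a thick $\otimes_X$-ideal which is also a prime thick subcategory; then $\scrP\subsetneq\Perf X$ and $\Th(\Perf X)_{>\scrP}$ has a minimum $\overline{\scrP}$. Write $\scrP=\Perf_{W}X$ with $W\subsetneq X$ specialization-closed \cite{thomason}. If $\scrP$ were not a prime ideal, there would be $A,B\in\Perf X$ with $A\otimes_X B\in\scrP$ but $A\notin\scrP$ and $B\notin\scrP$, i.e.\ $\Supp A\cap\Supp B\subseteq W$ while $\Supp A\not\subseteq W$ and $\Supp B\not\subseteq W$. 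Since $A\notin\scrP$, the thick subcategory $\lbr\scrP,A\rbr$ strictly contains $\scrP$, so $\overline{\scrP}\subseteq\lbr\scrP,A\rbr$; and $\lbr\scrP,A\rbr$ is contained in the smallest thick $\otimes_X$-ideal containing $\scrP$ and $A$, which by \cite{thomason} equals $\Perf_{W\cup\Supp A}X$. Hence $\overline{\scrP}\subseteq\Perf_{W\cup\Supp A}X$ and, symmetrically, $\overline{\scrP}\subseteq\Perf_{W\cup\Supp B}X$, so
\[
\overline{\scrP}\subseteq\Perf_{W\cup\Supp A}X\cap\Perf_{W\cup\Supp B}X=\Perf_{W\cup(\Supp A\cap\Supp B)}X=\Perf_{W}X=\scrP,
\]
contradicting $\scrP\subsetneq\overline{\scrP}$. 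Therefore $\scrP$ is a prime ideal.

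\emph{Prime ideal $\Rightarrow$ prime thick subcategory.} Assume $\scrP$ is a prime ideal. By Theorem~\ref{thm:balmer}, $\scrP=\scrS_X(x)$ for a unique point $x\in X$. Choose an affine open neighborhood $U=\Spec R$ of $x$, and let $\p\in\Spec R$ correspond to $x$. Since $x\in U$, every object supported on $X\backslash U$ lies in $\scrS_X(x)$, so $\Perf_{X\backslash U}X\subseteq\scrP$, and Proposition~\ref{verdier}, Lemma~\ref{open rest} and Proposition~\ref{dense} together give an order isomorphism
\[
\Phi\colon\Th(\Perf X)_{\geq\scrP}\simto\Th(\Perf U)_{\geq\Phi(\scrP)}
\]
whose inverse sends $\scrY$ to $\{A\in\Perf X\mid A|_U\in\scrY\}$. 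Because $\Supp(A|_U)=\Supp A\cap U$ and $x\in U$, this inverse carries $\scrS_U(x)$ exactly onto $\scrS_X(x)$, so $\Phi(\scrP)=\scrS_U(x)$. By Theorem~\ref{class ring} every thick subcategory of $\Perf U=\Perf R$ has the form $\Perf_{W'}(\Spec R)$ for a specialization-closed $W'$, and $\scrS_U(x)=\Perf_{W_{\p}}(\Spec R)$ where $W_{\p}\defeq\{\q\in\Spec R\mid\q\not\subseteq\p\}$ is specialization-closed. The least specialization-closed set strictly containing $W_{\p}$ is $W_{\p}\cup\{\p\}$: any larger one contains some $\q\subseteq\p$, hence also the specialization $\p$ of $\q$. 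So $\Th(\Perf U)_{>\scrS_U(x)}$ has a least element $\Perf_{W_{\p}\cup\{\p\}}(\Spec R)$, and transporting through $\Phi$ shows $\Th(\Perf X)_{>\scrP}$ has a least element, i.e.\ $\scrP$ is a prime thick subcategory.

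\emph{Expected main difficulty.} The subtle direction is $(\Rightarrow)$: a priori $\Th(\Perf X)_{>\scrP}$ also contains thick subcategories that are \emph{not} $\otimes_X$-ideals, and one must ensure none of them lies strictly below the candidate cover $\Perf_{W_{\p}\cup\{\p\}}$. Passing to the affine chart $U=\Spec R$ removes this obstacle, because on $\Spec R$ every thick subcategory is automatically a $\otimes$-ideal by Theorem~\ref{class ring}; the only care then required is the identification $\Phi(\scrP)=\scrS_U(x)$, which is immediate from $\Supp(A|_U)=\Supp A\cap U$. The reverse direction rests only on the (standard) classification of thick $\otimes_X$-ideals of $\Perf X$ \cite{thomason} and the equality ``smallest thick $\otimes_X$-ideal containing $\scrP$ and $A$'' $=\Perf_{W\cup\Supp A}X$.
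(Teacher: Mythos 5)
The paper does not actually include its own proof of this statement --- it is cited verbatim from Matsui \cite[Corollary 4.9]{matsui} --- so there is no in-paper argument to compare against. That said, your proof is a correct self-contained argument built from results that the paper does import (Theorems \ref{class ring} and \ref{thm:balmer}, Proposition \ref{verdier}, Proposition \ref{dense}, Lemma \ref{open rest}).

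Both directions check out. For ``prime thick $\Rightarrow$ prime ideal'' you only actually use that $\Perf_{W\cup\Supp A}X$ is a thick subcategory containing $\scrP$ and $A$ (not the stronger identification of it with a generated $\otimes$-ideal), together with $\Supp(A\otimes_X B)=\Supp A\cap\Supp B$ for perfect complexes and the formula $\Perf_{W_1}X\cap\Perf_{W_2}X=\Perf_{W_1\cap W_2}X$; all of these hold. For ``prime ideal $\Rightarrow$ prime thick'' your reduction to an affine chart via Proposition \ref{verdier}/Lemma \ref{open rest}/Proposition \ref{dense} is exactly the right move: on $\Spec R$ Theorem \ref{class ring} makes \emph{every} thick subcategory a $\otimes$-ideal, so you can exhibit the cover as $\Perf_{W_{\p}\cup\{\p\}}(\Spec R)$ and transport it back. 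The identification $\Phi(\scrP)=\scrS_U(x)$ is correctly justified using $\Supp(A|_U)=\Supp A\cap U$ and the explicit description of $\Phi^{-1}$.

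Two small points of hygiene rather than gaps. First, the paper's definition of ``prime ideal'' does not explicitly exclude the improper ideal $\Perf X$, and with that reading the statement would fail ($\Perf X$ is vacuously ``prime'' in the multiplicative sense but never a prime thick subcategory); you implicitly use the standard convention that prime ideals are proper, which is what Balmer and Matsui intend, so this is fine, but it is worth flagging when invoking Theorem \ref{thm:balmer}. Second, you invoke Thomason's classification to write an arbitrary thick $\otimes$-ideal of $\Perf X$ as $\Perf_W X$ --- this is valid on a noetherian scheme because $\Perf X$ is rigid, hence every thick $\otimes$-ideal is radical and Thomason's bijection applies; a reader not steeped in tensor-triangular geometry would appreciate a sentence noting this.
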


For a thick subcategory $\scrU$ of $\Perf X$, we define 
\[
\Supp(\scrU)\defeq \bigcup_{A\in\scrU} \Supp(A)\subseteq X.
\]
For later use, we prove the following.
\begin{lem}\label{lem:fg closed}
If $\scrU\in\Th(\Perf X)$ is finitely generated, $\Supp(\scrU)$ is closed.
\begin{proof}
If  $\scrU=\lbr G\rbr$ for some $G\in\scrU$,  it follows that $\Supp(\scrU)=\Supp(G)$. Hence $\Supp(\scrU)$ is closed.
\end{proof}
\end{lem}

\vspace{2mm}
\subsection{Composition series, length and JD property}
~

\begin{dfn}\label{D:CS}
A {\it composition series}  of $\scrT$ is a finite sequence 
\[
\scrS_{*}=(0=\scrS_0\subsetneq \scrS_1\subsetneq\cdots\subsetneq \scrS_n=\scrT)
\]
of $\scrS_i\in\Th(\scrT)$ such that for each $1\leq i\leq n$, the quotient $\scrS_i/\scrS_{i-1}$ is simple. We denote by $\CS(\scrT)$ the set of composition series in $\scrT$. If $\scrS_*=(\scrS_0\subsetneq\cdots\subsetneq\scrS_n)\in \CS(\scrT)$, we set $\ell(\scrS_*)\defeq n$, and call it the {\it length} of $\scrS_*$.
\end{dfn}

\begin{rem} \label{rem:alk}
In \cite{alk} another version of composition series for derived module categories are studied. However, they only allow thick subcategories that are both admissible and equivalent to derived module categories. In particular, categories that are simple in the setting of \cite{alk} will typically not be simple in our setting and not every full exceptional sequence gives rise to a composition series in their setting, cf. also \cite{Kalckgentle}.  
\end{rem}

 If $\scrT$ admits a composition series, we set 
\[
\ell(\scrT)\defeq\min\left\{\ell(\scrS_*)\relmiddle|\mbox{$\scrS_*\in \CS(\scrT)$}\right\},
\]
and call it the {\it length} of $\scrT$. If $\scrT=0$, we put $\ell(\scrT)\defeq0$, and if $\scrT$ does not admit any composition series, we set $\ell(\scrT)\defeq\infty$. By definition, $\ell(\scrT)=1$ if and only if $\scrT$ is simple. Moreover, $\ell(\scrT)=2$ if and only if there is $\scrS_*\in\CS(\scrT)$ with $\ell(\scrS_*)=2$.

\begin{prop}\label{prop:fg max}
Assume that $\ell(\scrT)<\infty$. 
\begin{itemize}
\item[$(1)$] $\scrT$ is finitely generated.
\item[$(2)$] There is a finitely generated maximal thick subcategory of $\scrT$.
\end{itemize}
\begin{proof}
If $\ell(\scrT)<\infty$, there is a composition series $\scrS_{*}=(\scrS_0\subsetneq\cdots\subsetneq\scrS_n)$ in $\scrT$. Since $\scrS_1$ is simple, $\scrS_1=\lbr A_1\rbr$ holds for every non-zero object $A_1\in\scrS_1$. Since there is no  thick subcategory $\scrU$ with $\scrS_1\subsetneq \scrU\subsetneq \scrS_2$, $\scrS_2=\lbr A_1,A_2\rbr$ holds for any object $A_2\in\scrS_2\backslash \scrS_1$. Repeating this argument shows that  $\scrS_i$ is finitely generated for each $1\leq i\leq n$. In particular, $\scrT=\scrS_n$ is finitely generated, and $\scrS_{n-1}$ is a finitely generated maximal thick subcategory.
\end{proof}
\end{prop}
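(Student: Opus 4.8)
The plan is to exhibit a split generator, and a finitely generated maximal thick subcategory, by walking up a given composition series one step at a time. Since $\ell(\scrT)<\infty$, fix a composition series $\scrS_*=(0=\scrS_0\subsetneq\scrS_1\subsetneq\cdots\subsetneq\scrS_n=\scrT)$; here $n\geq1$ unless $\scrT=0$, the latter case being immediate for $(1)$. The basic input is that, by the lattice isomorphism of Proposition \ref{verdier} applied to $\scrS_{i-1}\subseteq\scrS_i$, simplicity of $\scrS_i/\scrS_{i-1}$ is equivalent to the nonexistence of a thick subcategory strictly between $\scrS_{i-1}$ and $\scrS_i$ -- the observation already recorded just after the definition of composition series.

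For $(1)$ I would prove by induction on $i$ that $\scrS_i=\lbr A_1,\ldots,A_i\rbr$ for suitable objects $A_1,\ldots,A_i\in\scrT$. For $i=1$ the category $\scrS_1=\scrS_1/\scrS_0$ is simple, so $\scrS_1=\lbr A_1\rbr$ for any nonzero $A_1\in\scrS_1$. For the inductive step, assume $\scrS_{i-1}=\lbr A_1,\ldots,A_{i-1}\rbr$ and pick any $A_i\in\scrS_i\setminus\scrS_{i-1}$. Then $\lbr A_1,\ldots,A_i\rbr$ is a thick subcategory containing $\scrS_{i-1}$, contained in $\scrS_i$ (since $A_j\in\scrS_j\subseteq\scrS_i$ for $j\leq i$), and strictly larger than $\scrS_{i-1}$ because it contains $A_i$; by the equivalence above it must coincide with $\scrS_i$. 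Taking $i=n$ gives $\scrT=\lbr A_1,\ldots,A_n\rbr=\lbr A_1\oplus\cdots\oplus A_n\rbr$, so $A_1\oplus\cdots\oplus A_n$ is a split generator and $\scrT$ is finitely generated.

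For $(2)$, note that $\scrT/\scrS_{n-1}=\scrS_n/\scrS_{n-1}$ is simple, so $\scrS_{n-1}$ is a maximal thick subcategory, and the induction carried out for $(1)$ already shows $\scrS_{n-1}=\lbr A_1,\ldots,A_{n-1}\rbr$ is finitely generated; this is the required category. I expect no real obstacle here; the only point needing a moment's care is the inductive step -- verifying that $\lbr A_1,\ldots,A_i\rbr$ genuinely lies strictly between $\scrS_{i-1}$ and $\scrS_i$, so that simplicity of $\scrS_i/\scrS_{i-1}$ forces equality -- together with the elementary fact that a thick subcategory generated by finitely many objects is generated by their direct sum.
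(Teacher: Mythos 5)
Your proof is correct and follows essentially the same route as the paper's: walk up a fixed composition series, inductively choosing $A_i\in\scrS_i\setminus\scrS_{i-1}$ to see that $\scrS_i=\lbr A_1,\ldots,A_i\rbr$, then read off $(1)$ at $i=n$ and $(2)$ at $i=n-1$. The only cosmetic difference is that you spell out the appeal to Proposition \ref{verdier} and the reduction from a finite generating set to a single split generator, both of which the paper leaves implicit.
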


\noindent
Full exceptional sequences yield composition series in the following way.

\begin{prop}\label{prop:length exceptional}
If $\scrT$ admits a full exceptional sequence $E_1,\hdots,E_n$, then the sequence
\begin{equation}\label{eq:fec}
\Bigl( \lbr E_1\rbr\subset \lbr E_1,E_2\rbr\subset\cdots\subset\lbr E_1,\hdots,E_n\rbr\Bigr)
\end{equation}
forms a composition series of $\scrT$. In particular, 
$\ell(\scrT)\leq \rk(K_0(\scrT))$.
\begin{proof}
Set $\scrS_i\defeq \lbr E_1,\hdots,E_i\rbr$. Then $\scrS_i/\scrS_{i-1}\cong \lbr E_i\rbr$ holds, and each $\lbr E_i\rbr\cong \Db(k)$ is simple. This proves the first assertion. The second one follows since $K_0(\scrT)=\bigoplus_{i=1}^n[E_i]\cong \bZ^n$.
\end{proof}
\end{prop}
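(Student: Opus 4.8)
The plan is to show that the sequence \eqref{eq:fec} is a composition series, i.e. that each successive subquotient is simple, and then to deduce the bound on $\ell(\scrT)$ from the standard fact that an exceptional collection gives a semi-orthogonal decomposition.

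First I would recall the semi-orthogonal decomposition associated to a full exceptional collection $E_1,\hdots,E_n$. For each $i$, the thick subcategory $\scrS_i \defeq \lbr E_1,\hdots,E_i\rbr$ coincides with the admissible subcategory generated by $E_1,\hdots,E_i$, and one has a semi-orthogonal decomposition $\scrS_i = \langle \scrS_{i-1}, \lbr E_i\rbr\rangle$ (this is where one uses that $E_i$ is exceptional, so $\lbr E_i\rbr \cong \Db(k)$, and that $E_i$ is right-orthogonal to $\scrS_{i-1}$ in the appropriate sense). In particular $\scrS_{i-1}$ is admissible in $\scrS_i$, so the Verdier quotient functor $\scrS_i \to \scrS_i/\scrS_{i-1}$ restricts to an equivalence $\lbr E_i\rbr \simto \scrS_i/\scrS_{i-1}$ on the orthogonal complement. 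Hence $\scrS_i/\scrS_{i-1} \cong \lbr E_i\rbr \cong \Db(k)$.

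Next I would verify that $\Db(k)$ is simple. Its only objects up to isomorphism and shift are finite direct sums of copies of $k$, so every non-zero thick subcategory contains $k$ and hence is all of $\Db(k)$; thus $\Th(\Db(k)) = \{0, \Db(k)\}$, which is exactly the definition of simple. Combining this with the previous paragraph, each subquotient $\scrS_i/\scrS_{i-1}$ is simple, and since $\scrS_0 = 0 \subsetneq \scrS_1 \subsetneq \cdots \subsetneq \scrS_n = \scrT$ (the inclusions being strict because the $E_i$ are a genuine exceptional collection, so each $E_{i+1} \notin \scrS_i$ as $\Hom(E_{i+1},E_{i+1}) \neq 0$ but would have to vanish by semi-orthogonality), this is a composition series. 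Therefore $\ell(\scrT) \leq \ell(\scrS_*) = n$. Finally, the semi-orthogonal decomposition $\scrT = \langle \lbr E_1\rbr,\hdots,\lbr E_n\rbr\rangle$ yields $K_0(\scrT) = \bigoplus_{i=1}^n K_0(\lbr E_i\rbr) = \bigoplus_{i=1}^n \bZ[E_i] \cong \bZ^n$, so $\rk(K_0(\scrT)) = n$ and hence $\ell(\scrT) \leq \rk(K_0(\scrT))$.

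The only genuinely delicate point is the identification $\scrS_i/\scrS_{i-1} \cong \lbr E_i\rbr$: one must be slightly careful that $\lbr E_i\rbr$, the thick subcategory \emph{generated} by $E_i$ inside $\scrT$, agrees with the component appearing in the semi-orthogonal decomposition of $\scrS_i$, and that this component maps isomorphically under the quotient functor. This is standard (it is the assertion that a semi-orthogonal summand is equivalent to the corresponding Verdier quotient), but it is the one place where the structure of exceptional collections is really used rather than formal nonsense about lattices; everything else is bookkeeping.
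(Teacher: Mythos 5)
Your proof takes essentially the same approach as the paper: identify $\scrS_i/\scrS_{i-1}$ with $\lbr E_i\rbr \cong \Db(k)$ via the semi-orthogonal decomposition, note $\Db(k)$ is simple, and compute $K_0$ from the direct sum decomposition. You just spell out the intermediate steps (strictness of the inclusions, why $\Db(k)$ is simple, why the quotient is identified with the orthogonal complement) that the paper takes for granted.
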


We don't know any example of a triangulated category $\scrT$  with a full exceptional sequence and  $\ell(\scrT)<\rk(K_0(\scrT))$.  So we ask the following.

\begin{ques}\label{Q:FESminimal}
Does the existence of a full exceptional sequence in $\scrT$ imply $\ell(\scrT)=\rk(K_0(\scrT))$?
\end{ques}

The following two propositions are immediate consequences of Propositions \ref{dense} and  \ref{prop:od} respectively, and so we omit the proofs.
\begin{prop}
Let $F\colon\scrT\to \scrT'$ be a  fully faithful dense functor. For $\scrS'_*=(\scrS'_0\subsetneq\cdots\subsetneq\scrS'_n)\in \CS(\scrT')$, we have a composition series $F^{-1}(\scrS'_*)\in \CS(\scrT)$ given by 
\[
F^{-1}(\scrS'_*)\defeq (F^{-1}(\scrS'_0)\subsetneq\cdots\subsetneq F^{-1}(\scrS'_n)).
\]
Furthermore, this defines a bijective map 
\[F^{-1}(-)\colon \CS(\scrT')\simto \CS(\scrT)\]
that preserves length. In particular, $\ell(\scrT)=\ell(\scrT')$.
\end{prop}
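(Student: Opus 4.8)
The final statement to prove is the proposition asserting that a fully faithful dense functor $F\colon\scrT\to\scrT'$ induces a bijection $F^{-1}(-)\colon\CS(\scrT')\simto\CS(\scrT)$ preserving length, hence $\ell(\scrT)=\ell(\scrT')$. The paper explicitly says this is an ``immediate consequence of Proposition \ref{dense}.''

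The plan is to transport composition series through the lattice isomorphism $F^{-1}\colon\Th(\scrT')\simto\Th(\scrT)$ of Proposition \ref{dense}(2), whose inverse is $\langle F\rangle$. First I would observe that since $F^{-1}$ is an order isomorphism of the lattices $\Th(\scrT')$ and $\Th(\scrT)$, it sends strict inclusions to strict inclusions in both directions, sends $0\mapsto 0$ and $\scrT'\mapsto\scrT$ (the latter using density: $\langle F(\scrT)\rangle=\scrT'$, or directly $F^{-1}(\scrT')=\scrT$), and therefore carries any finite chain $0=\scrS'_0\subsetneq\cdots\subsetneq\scrS'_n=\scrT'$ to a finite chain $0=F^{-1}(\scrS'_0)\subsetneq\cdots\subsetneq F^{-1}(\scrS'_n)=\scrT$ of the same length. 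So length is automatically preserved and the endpoints are correct.

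The one genuine point to check is that the chain $F^{-1}(\scrS'_*)$ is again a \emph{composition} series, i.e. that each successive quotient is simple. Here the cleanest argument is the intrinsic characterization: $\scrS'_i/\scrS'_{i-1}$ is simple if and only if there is no thick subcategory strictly between $\scrS'_{i-1}$ and $\scrS'_i$. Since $F^{-1}$ is an order isomorphism of the whole lattice, it restricts to an order isomorphism of the intervals $[\scrS'_{i-1},\scrS'_i]\simto[F^{-1}(\scrS'_{i-1}),F^{-1}(\scrS'_i)]$; hence one interval is a two-element chain $\{\scrS'_{i-1},\scrS'_i\}$ exactly when the other is. Thus $F^{-1}(\scrS'_*)\in\CS(\scrT)$. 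The same reasoning applied to $\langle F\rangle$ (the inverse lattice isomorphism) shows $\langle F\rangle(-)\colon\CS(\scrT)\to\CS(\scrT')$ is well-defined, and since $F^{-1}$ and $\langle F\rangle$ are mutually inverse on $\Th$, they are mutually inverse on $\CS$; this gives the bijection. Finally $\ell(\scrT)=\min_{\scrS_*\in\CS(\scrT)}\ell(\scrS_*)=\min_{\scrT'_*\in\CS(\scrT')}\ell(\langle F\rangle^{-1}(\scrT'_*))=\ell(\scrT')$, using that the bijection preserves length and, if $\CS(\scrT')=\emptyset$, then $\CS(\scrT)=\emptyset$ too so both lengths are $\infty$.

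There is no real obstacle here; if anything, the only thing to be careful about is the edge cases $\scrT=0$ (then $\scrT'=0$ by density and both sides are trivial) and $\CS=\emptyset$, which should be mentioned so the equality $\ell(\scrT)=\ell(\scrT')$ holds with the convention $\ell=\infty$. Everything else is a formal consequence of the lattice isomorphism, so the write-up can legitimately be two or three sentences invoking Proposition \ref{dense}, which is presumably why the authors omit the proof.
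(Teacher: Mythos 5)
Your argument is correct and is exactly the ``immediate consequence of Proposition \ref{dense}'' that the paper intends (and therefore omits): transport chains through the order isomorphism $F^{-1}\colon\Th(\scrT')\simto\Th(\scrT)$ with inverse $\lbr F\rbr$, and use the paper's own equivalent characterization of simplicity of $\scrS_i/\scrS_{i-1}$ as the absence of intermediate thick subcategories, which is manifestly preserved by a lattice isomorphism. Your handling of the edge cases ($\scrT=0$, $\CS=\emptyset$) is a sensible addition but not a genuine departure.
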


\begin{prop}\label{prop:semi-simple length}
Let $\scrT=\scrT_1\oplus\cdots\oplus \scrT_n$ be a direct sum decomposition. Then $\ell(\scrT)=\sum_{i=1}^n\ell(\scrT_i)$. In particular, if each $\scrT_i$ is simple, $\ell(\scrT)=n$.
\end{prop}

The following observation is useful.

\begin{lem}\label{lem:additivity}
Let $\scrU\in \Th(\scrT)$. If there are $\scrS_*\in\CS(\scrU)$ and $\scrS'_*\in\CS(\scrT/\scrU)$, then there exists $\widetilde{\scrS}_*\in\CS(\scrT)$ with $\ell(\widetilde{\scrS}_*)=\ell(\scrS_*)+\ell(\scrS'_*)$.
\end{lem}

\begin{dfn}\label{dfn:composite}
We say that a thick subcategory $\scrU\in \Th(\scrT)$ is {\it composite} in $\scrT$ if $\ell(\scrU)<\infty$ and $\ell(\scrT/\scrU)<\infty$. 
\end{dfn}

\begin{rem}
Note that $\scrU\in\Th(\scrT)$ is composite in $\scrT$ if and only if there is a composition series $\scrS_*=(\scrS_i)_{0\leq i\leq n}\in\CS(\scrT)$ in $\scrT$ such that $\scrU=\scrS_j$ for some $0\leq j\leq n$. By definition,  the following are equivalent: 
\begin{itemize}
\item[(1)] $\scrT$ is composite in $\scrT$.
\item[(2)] The trivial subcategory $0$ is composite in $\scrT$.
\item[(3)] $\ell(\scrT)<\infty$.

\end{itemize}
\end{rem}

The following statement says that taking length satisfies subadditivity. 

\begin{prop}\label{prop:length verdier}
Let $\scrU\in \Th(\scrT)$. If $\scrU$ is composite, then 
\[
\ell(\scrT)\leq \ell(\scrU)+\ell(\scrT/\scrU)<\infty.
\]
\begin{proof}
This follows from Lemma \ref{lem:additivity}.
\end{proof}
\end{prop}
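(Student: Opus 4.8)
The plan is to deduce this directly from Lemma \ref{lem:additivity}, which carries all of the substance; the present statement is then just bookkeeping about minima, so the proof will be very short.

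First I would unwind the hypothesis that $\scrU$ is composite in $\scrT$: by Definition \ref{dfn:composite} this means $\ell(\scrU)<\infty$ and $\ell(\scrT/\scrU)<\infty$. Since the length of a triangulated category admitting a composition series is by definition the \emph{minimum} of the lengths of its composition series, finiteness gives us composition series that realize these minima, i.e. there exist $\scrS_*\in\CS(\scrU)$ with $\ell(\scrS_*)=\ell(\scrU)$ and $\scrS'_*\in\CS(\scrT/\scrU)$ with $\ell(\scrS'_*)=\ell(\scrT/\scrU)$. (The extreme cases $\scrU=0$ and $\scrU=\scrT$ make the asserted inequality trivial, so one may harmlessly assume $0\subsetneq\scrU\subsetneq\scrT$, but this is not actually needed.)

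Next I would feed these two composition series into Lemma \ref{lem:additivity}: it produces a composition series $\widetilde{\scrS}_*\in\CS(\scrT)$ with $\ell(\widetilde{\scrS}_*)=\ell(\scrS_*)+\ell(\scrS'_*)=\ell(\scrU)+\ell(\scrT/\scrU)$. In particular $\CS(\scrT)\neq\emptyset$, so $\ell(\scrT)<\infty$, and since $\ell(\scrT)$ is the minimum over all composition series of $\scrT$ we obtain $\ell(\scrT)\leq\ell(\widetilde{\scrS}_*)=\ell(\scrU)+\ell(\scrT/\scrU)$, which is exactly the claim.

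I do not expect any genuine obstacle. The only point requiring care is already handled inside Lemma \ref{lem:additivity}, namely that the concatenation $\scrS_0\subsetneq\cdots\subsetneq\scrS_n=\scrU\subsetneq F^{-1}(\scrS'_1)\subsetneq\cdots\subsetneq F^{-1}(\scrS'_m)=\scrT$ has simple consecutive subquotients throughout: below $\scrU$ this is immediate from $\scrS_*\in\CS(\scrU)$, and above $\scrU$ it follows because the lattice isomorphism of Proposition \ref{verdier} identifies the interval $\{\scrV\in\Th(\scrT)\mid\scrU\subseteq\scrV\}$ with $\Th(\scrT/\scrU)$ and sends $F^{-1}(\scrS'_i)/F^{-1}(\scrS'_{i-1})$ to $\scrS'_i/\scrS'_{i-1}$, which is simple by $\scrS'_*\in\CS(\scrT/\scrU)$. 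Hence the proof is simply: \emph{this follows from Lemma \ref{lem:additivity}}, together with the observation that finiteness of $\ell(\scrU)$ and $\ell(\scrT/\scrU)$ guarantees the relevant minimizing composition series exist.
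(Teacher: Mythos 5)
Your proposal is correct and is exactly the argument the paper intends: pick minimizing composition series of $\scrU$ and $\scrT/\scrU$ (they exist since both lengths are finite), concatenate them via Lemma \ref{lem:additivity} to get a composition series of $\scrT$ of length $\ell(\scrU)+\ell(\scrT/\scrU)$, and conclude since $\ell(\scrT)$ is the minimum over $\CS(\scrT)$. The paper's one-line proof is just a compressed version of this.
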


\begin{rem}
(1) Let $\scrU,\scrV\in\Th(\scrT)$. Even if $\ell(\scrT)<\infty$, neither $\scrU$ nor $\scrT/\scrV$ admits any composition series in general. For example, by Proposition \ref{prop:length exceptional}, we see that $\ell(\Db(\bP^1))=2<\infty$. However, $\scrU\defeq \{F\in \Db(\bP^1)\mid \Supp(F)\neq \bP^1\}$ is not finitely generated, and in particular $\ell(\scrU)=\infty$. Let $p\defeq (1:0)\in \bP^1$, and set $U\defeq \bP^1\backslash \{p\}\cong \bA^1$. Then $\scrV\defeq\Perf_p\bP^1$ is a thick subcategory of $\Db(\bP^1)$, and by Lemma \ref{open rest}, there is a bijection  $\Th(\Db(\bP^1)/\scrV)\cong \Th(\Db(\bA^1))$. 
Hence Corollary \ref{cor:commutative ring} below shows that the quotient $\Db(\bP^1)/\scrV$ does not admit any composition series.\\
(2) The perfect derived category $\Perf k Q$ of a graded Kronecker quiver $Q$ contains a simple thick subcategory $\scrU$ with $\ell(\Perf k Q/\scrU)=\ell(\Perf k Q)=2$ (see Section \ref{section:rational surfaces} for the details).
This shows that the inequality in Proposition \ref{prop:length verdier} is strict in general, even if $\ell(\scrU)<\infty$ and $\ell(\scrT/\scrU)<\infty$.
\end{rem}

Lemma \ref{lem:additivity} implies the following.

\begin{prop}\label{prop:length sod}
Let $\scrT=\l\scrA_1,\hdots,\scrA_n\r$ be a semi-orthogonal decomposition with $\ell(\scrA_i)<\infty$ for each $1\leq i\leq n$. If $\scrS^i_*\in\CS(\scrA_i)$, there exists $\scrS_*\in\CS(\scrT)$ with $\ell(\scrS_*)=\sum_{i=1}^n\ell(\scrS^i_*)$. In particular,   $\ell(\scrT)\leq \sum_{i=1}^n\ell(\scrA_i)$ holds.
\end{prop}

We consider the following property.

\begin{dfn}
For a finite length $\scrT$, we say that $\scrT$ satisfies the {\it JD property} if $\ell(\scrS_*)=\ell(\scrS'_*)$ holds  for any $\scrS_*,\scrS'_*\in\CS(\scrT)$.
\end{dfn}

After a draft version of this paper appeared, Greg Stevenson and the second named author independently noticed that derived categories of certain finite-dimensional algebras do not satisfy the JD property. 
We generalize this observation in Theorem \ref{thm:appendix 2}, and use it in section \ref{section:rational surfaces} to obtain geometric examples without the JD property.

\begin{exa} \label{Ex:JDP}
By Proposition \ref{prop:od}, if $\cT=\cT_1 \oplus \cdots \oplus \cT_\ell$ and all $\cT_i$ satisfy the JD property, then $\cT$ satisfies the JD property.
In particular, all semi-simple triangulated categories satisfy the JD property.
\end{exa}

Hiroki Matsui pointed out the following: let $X$ be a topological space, and let  $\Spcl(X)$ be the set of specialization-closed subsets of $X$. Consider the similar notions of composition series, length and the JD property for the poset $\Spcl(X)$.

\begin{lem}\label{lem:matsui}
If $X$ is a $T_0$-space, the following are equivalent. 
\begin{itemize}
\item[$(1)$] $\Spcl(X)$ has a composition series 
\item[$(2)$] The set $X$ is  finite. 
\end{itemize}
Furthermore, if these conditions hold, $\ell(\Spcl(X))=\#X$ holds, and  $\Spcl(X)$ satisfies the JD property. 
\begin{proof}
(1) $\Rightarrow$ (2) Let $W_1,W_2\in \Spcl(X)$ such that $W_1\subsetneq W_2$ and there is no $W\in\Spcl(X)$ with $W_1\subsetneq W \subsetneq W_2$. We claim that $W_2=W_1\cup\{x\}$ for a unique $x\in W_2\backslash W_1$. Indeed, let $x\in W_2\backslash W_1$. Since $X$ is $T_0$, the subset $\overline{\{ x\}}\backslash \{x\}$ is also specialization-closed. Consider the following chain in $\Spcl(X)$:
\[
W_1\subseteq W_1\cup \Bigl(\overline{\{ x\}}\backslash \{x\}\Bigr)\subsetneq W_1\cup \overline{\{ x\}}\subseteq W_2,
\]
where the last inclusion holds since $W_2$ is specialization-closed.
By the assumption, we have $W_1=W_1\cup \Bigl(\overline{\{ x\}}\backslash \{x\}\Bigr)$ and $W_1 \cup \overline{\{ x\}} = W_2$. These equalities imply the claim. If $\Spcl(X)$ has a composition series
\[
W_0=\emptyset \subsetneq W_1 \subsetneq\cdots\subsetneq W_n=X,
\]
the claim shows that $\#X=n$. (2) $\Rightarrow$ (1) is obvious, and the latter assertion follows from the above argument.
\end{proof}
\end{lem}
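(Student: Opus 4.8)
The plan is to translate everything into the language of the specialization order and to exploit that the $T_0$ axiom turns this preorder into a genuine partial order. Recall that $W\subseteq X$ lies in $\Spcl(X)$ precisely when $x\in W$ forces $\overline{\{x\}}\subseteq W$; equivalently, $W$ is a down-set for the specialization order $y\leq z:\Longleftrightarrow y\in\overline{\{z\}}$. First I would record the one place where the separation axiom enters: for every $x\in X$ the set $\overline{\{x\}}\setminus\{x\}$ is still specialization-closed. Indeed, if $z\in\overline{\{x\}}$ with $z\neq x$, then $\overline{\{z\}}\subseteq\overline{\{x\}}$, and $x\notin\overline{\{z\}}$, for otherwise $\overline{\{x\}}=\overline{\{z\}}$, contradicting $T_0$.

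For $(1)\Rightarrow(2)$ the crucial step is a covering claim: if $W_1\subsetneq W_2$ in $\Spcl(X)$ and no specialization-closed set lies strictly between them, then $W_2=W_1\cup\{x\}$ for a unique $x\in W_2\setminus W_1$. To prove it I would pick any $x\in W_2\setminus W_1$ and insert the chain
\[
W_1\ \subseteq\ W_1\cup\bigl(\overline{\{x\}}\setminus\{x\}\bigr)\ \subsetneq\ W_1\cup\overline{\{x\}}\ \subseteq\ W_2
\]
in $\Spcl(X)$, whose middle inclusion is strict because $x$ belongs only to the right-hand term; maximality then forces the two outer inclusions to be equalities, so $W_2=W_1\cup\overline{\{x\}}=W_1\cup\{x\}$. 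Feeding this into a composition series $\emptyset=W_0\subsetneq W_1\subsetneq\cdots\subsetneq W_n=X$ yields $W_i=W_{i-1}\sqcup\{x_i\}$ with the $x_i$ pairwise distinct, hence $X=\{x_1,\dots,x_n\}$ is finite with $\#X=n$.

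For $(2)\Rightarrow(1)$, given that $X$ is finite I would use that the specialization order is then a partial order on a finite set, so it admits a linear extension; after relabelling I may assume $x_k\leq x_j\Rightarrow k\leq j$. Each $W_i\defeq\{x_1,\dots,x_i\}$ is then a down-set, hence specialization-closed, and since $W_i\setminus W_{i-1}$ is a single point no specialization-closed set lies strictly between $W_{i-1}$ and $W_i$; thus $\emptyset=W_0\subsetneq\cdots\subsetneq W_n=X$ is a composition series of length $n$. The furthermore is then immediate: the $(1)\Rightarrow(2)$ analysis shows that \emph{every} composition series has length exactly $\#X$, while $(2)\Rightarrow(1)$ produces one of that length, so $\ell(\Spcl(X))=\#X$ and the Jordan--Dedekind property holds.

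The only genuinely delicate point I anticipate is the preliminary observation that $\overline{\{x\}}\setminus\{x\}$ stays specialization-closed -- this is exactly where $T_0$ is needed and fails in general -- together with the (routine) appeal to the existence of a linear extension of a finite poset in $(2)\Rightarrow(1)$. Everything else is bookkeeping with down-sets.
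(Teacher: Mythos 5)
Your proposal is correct and follows essentially the same route as the paper: the key covering claim (any cover relation in $\Spcl(X)$ adds exactly one point, established via the $T_0$-dependent observation that $\overline{\{x\}}\setminus\{x\}$ is specialization-closed and the chain $W_1\subseteq W_1\cup(\overline{\{x\}}\setminus\{x\})\subsetneq W_1\cup\overline{\{x\}}\subseteq W_2$) is identical, and the counting argument for both $(1)\Rightarrow(2)$ and the Jordan--Dedekind conclusion is the same. The only difference is that you spell out $(2)\Rightarrow(1)$ via a linear extension of the finite specialization poset, where the paper simply calls this direction obvious; this is a harmless and correct elaboration.
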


\begin{cor}\label{cor:commutative ring}
Let $R$ be a noetherian commutative ring. Then $\ell(\Perf R)<\infty$ if and only if the set $\Spec R$ is finite. In this case, $\Perf R$ satisfies the JD property. 
\begin{proof}
This follows from Theorem \ref{class ring} and Lemma \ref{lem:matsui}.
\end{proof}
\end{cor}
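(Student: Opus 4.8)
The plan is to transport the problem from the triangulated category $\Perf R$ to the combinatorial poset $\Spcl(\Spec R)$ of specialization-closed subsets, where Lemma \ref{lem:matsui} applies essentially verbatim.

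First I would record that for a commutative ring $R$ the Zariski spectrum $\Spec R$ is a $T_0$-space (indeed it is sober), so Lemma \ref{lem:matsui} is applicable with $X=\Spec R$. Then I would invoke Theorem \ref{class ring}: the assignment $W\mapsto \Perf_W R$, sending a specialization-closed subset to the thick subcategory of perfect complexes supported on it, is an order-preserving bijection $\Spcl(\Spec R)\to \Th(\Perf R)$; its inverse sends $\scrU$ to $\Supp(\scrU)=\bigcup_{A\in\scrU}\Supp(A)$, which is manifestly order-preserving, so this is an isomorphism of lattices.

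The key observation is that both the notion of composition series and the length function $\ell$ --- in the triangulated setting as well as in the poset setting used in Lemma \ref{lem:matsui} --- depend only on the underlying partially ordered set. Indeed, by the reformulation built into our definition, a composition series of $\Perf R$ (resp.\ of $\Spcl(\Spec R)$) is precisely a maximal chain $0=\scrS_0\subsetneq \scrS_1\subsetneq\cdots\subsetneq \scrS_n$ in the lattice, i.e.\ one admitting no term strictly between two consecutive ones; a lattice isomorphism carries such chains to such chains in both directions. Hence Theorem \ref{class ring} yields a length-preserving bijection $\CS(\Spcl(\Spec R))\simto \CS(\Perf R)$.

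Finally I would combine this with Lemma \ref{lem:matsui}, which says that $\Spcl(\Spec R)$ admits a composition series if and only if $\Spec R$ is finite, and that in that case every composition series of $\Spcl(\Spec R)$ has length $\#\Spec R$. Transporting along the bijection, $\Perf R$ admits a composition series --- equivalently $\ell(\Perf R)<\infty$ --- exactly when $\#\Spec R<\infty$, and when this holds every composition series of $\Perf R$ has length $\#\Spec R$, so $\Perf R$ satisfies the Jordan--Dedekind property (and in fact $\ell(\Perf R)=\#\Spec R$). There is no genuine obstacle here; the only points deserving a line of justification are the formal identification of composition series with maximal chains --- exactly the ``equivalently'' clause in the definition --- and the fact that the bijection of Theorem \ref{class ring} respects the order in both directions.
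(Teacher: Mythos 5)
Your proposal is essentially identical to the paper's argument, which likewise cites Theorem \ref{class ring} and Lemma \ref{lem:matsui}; you merely spell out the routine verifications (that $\Spec R$ is $T_0$, that the Neeman bijection is an order isomorphism, and that composition series correspond to maximal chains in the lattice) that the paper leaves implicit.
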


\vspace{2mm}
\subsection{Krah's phantom subcategory}\label{section:Krah's phantom}~

In this section, we explain that the study of the JD property might be useful for the study of phantom categories. 

Let $X$ be the blow-up of $\bP_{\bC}^2$ at $10$ general closed points. Then $\Db(X)$ has a full exceptional sequence, and $\rk(K_0(\Db(X)))=13$. 

\begin{thm}[{\cite[Theorem 1.1]{kra}}] Let $X$ be the blow-up of $\bP_{\bC}^2$ at $10$ general closed points.
Then $\Db(X)$ admits an exceptional sequence $E_1,\hdots,E_{13}$ that is not full. 
\end{thm}

The above implies that 
\[\scrP\defeq \lbr E_1,\hdots,E_{13}\rbr^{\perp}\]
is a phantom subcategory. By the following remark, if $\Db(X)$ satisfies the JD property,  $\ell(\scrP)=\infty$ holds, and in particular, $\scrP$ is not simple. 

\begin{rem}\label{R:2.42}
Let $\scrT$ be a triangulated category with $\ell(\scrT)<\infty$, and let $\scrU\in\Th(\scrT)\backslash\{\scrT\}$ such that there is  a composition series $\scrS_*\in\CS(\scrU)$ with $\ell(\scrS_*)\geq\ell(\scrT)$.    If $\scrT$ satisfies the JD property, then $\ell(\scrT/\scrU)=\infty$. 

 Indeed, if $\ell(\scrT/\scrU)<\infty$, we can extend $\scrS_*$ to obtain $\widetilde{\scrS}_*\in\CS(\scrT)$ with $\ell(\widetilde{\scrS}_*)>\ell(\scrS_*)\geq\ell(\scrT)$. This contradicts the JD property of $\scrT$.  
\end{rem}

\begin{rem}
In Section \ref{section:JD property}, we provide counterexamples to the JD property of derived categories of certain smooth projective rational surfaces. However, the surfaces appearing in these counterexamples contain curves $C$ with self-intersection number $C^2<-1$, and these curves induce composition series of different lengths. On the other hand, the surface $X$ from above does not contain such a curve, and this might allow $\Db(X)$ to satisfy the JD property, cf.\ also Corollary \ref{cor:admissibleJDP} and Remark \ref{rem:SODintoJDP} for first steps in this direction.
\end{rem}

As we mentioned above, if $\Db(X)$ satisfies the JD property, the phantom $\scrP$ is not simple. In the following, we give examples of non-simple phantom subcategories on smooth projective varieties.

\begin{exa}\label{exa:hilbert} 
As above, let $X$ be one of the surfaces studied by Krah. 
Let $Y=\Hilb^n(X)$ be the Hilbert scheme of $n$ points on $X$. Then by \cite[Theorem 3.4]{kos} (see also the proof of \cite[Lemma 4.4]{kos}), $\Db(Y)$ is semi-orthogonally decomposed into several copies of the symmetric products $\scrA_i\defeq\Sym^i(\scrP)$ of $\scrP$ ($0\leq i\leq n$), which are also phantom if $i>0$. Thus the admissible subcategory  $\lbr\,\scrA_i\,|\, i>0\,\rbr\subset \Db(Y)$ is a phantom subcategory which has a semi-orthogonal decomposition whose components are $\scrA_i$.

Further examples of phantoms admitting semi-orthogonal decompositions arise from projective bundles on $X$ by work of Orlov \cite{orlov4}.
\end{exa}

\begin{rem}\label{Rem:2.44(old)}
In contrast to the examples above, simple triangulated categories $\scrT$ with vanishing Grothendieck group $K_0(\scrT)$ are also known. Indeed, consider the orbit category $$\cC_Q\defeq\Db(kQ)/(\bS [-2])$$ for a quiver $Q$,  where $\bS$ is the Serre-functor. $\cC_Q$ is called the {\it cluster category} of $Q$. Then $\cC_Q$ is simple for $Q$ Dynkin by \cite[Thm 8.1]{koe} and has vanishing Grothendieck group for $Q$ of Dynkin type $\bA_{2n}$ and $\bE_6, \bE_8$ by \cite[Prop. 5]{bkm}.

Other examples are given by the singularity categories of simple curve singularities of  Dynkin type $\bA_{2n}$, $\bE_6$, $\bE_8$ and the simple surface singularity of Dynkin type $\bE_8$, cf.\ \cite[Chapter 13]{yoshino} for vanishing of the Grothendieck group and \cite{takahashi1} for the simplicity statement. 

Note that these singularity categories with vanishing Grothendieck groups have different features from phantom categories, since their Hochschild homology groups are non-zero \cite[Theorem 6.6]{dyc}. We don't know whether the cluster categories above  have vanishing Hochschild homology .
\end{rem}

\vspace{2mm}
\subsection{Length spectra}~

In this section, we always assume that $\ell(\scrT)<\infty$. Consider the set
\[
\LS(\scrT)\defeq \{\ell(\scrS_*)\mid \scrS_*\in\CS(\scrT)\}\subseteq \bN\cup \{\infty\}
\]
of the length of all composition series of $\scrT$, and we call it the {\it length spectrum} of $\scrT$. Moreover, we set
\[
\ellu(\scrT)\defeq \sup\LS(\scrT),
\]
and  we call it the {\it ultimate length} of $\scrT$. When $\ellu(\scrT)<\infty$, we define the {\it Jordan--Dedekind index} of $\scrT$ by
\begin{equation}\label{eqn:JD index}
\JD(\scrT)\defeq\ellu(\scrT)-\ell(\scrT),
\end{equation}
and when $\ellu(\scrT)=\infty$, we put $\JD(\scrT)\defeq \infty$. 
By definition, $\scrT$ satisfies the JD property if and only if $\CS(\scrT)\neq \emptyset$ and $\JD(\scrT)=0$. In section \ref{section:JD property}, we will see that for every positive integer $n$, there is an indecomposable triangulated category $\scrT$ such that $\JD(\scrT)\geq n$. In what follows, we list natural questions on the above invariants. The first one is on the boundedness of length spectra.

\begin{ques}
Is there a finite length $\scrT$ such that $\ellu(\scrT)=\infty$?
\end{ques}

Examples of triangulated categories with $\JD(\scrT)>0$, which we know, satisfies $\JD(\scrT)<\ell(\scrT)$, and so we pose the following question.

\begin{ques}
If $\JD(\scrT)<\infty$, does the inequality $\JD(\scrT)< \ell(\scrT)$ hold?
\end{ques}

Orlov introduced the notion of {\it Orlov spectrum} of $\scrT$, which is defined to be the set of generation times of all split generators of $\scrT$ \cite{orl2}. In \cite{bfk}, gaps in Orlov spectra are considered.  
Similarly to this, we ask the existences of gaps in length spectra.

\begin{ques}
Is there a finite length $\scrT$ such that its length spectrum has a gap, i.e., there exists an integer $\ell(\scrT)<n<\ellu(\scrT)$ with $n\not\in\LS(\scrT)$? 
\end{ques}

\vspace{3mm}
\section{Triangulated categories with the JD property}\label{section:example}

Throughout this section $k$ is assumed to be algebraically closed.  

\vspace{2mm}
\subsection{Hereditary algebras}~

 In this section, we prove that the derived category of a finite-dimensional hereditary\footnote{That is, the global dimension is at most $1$.} algebra satisfies the JD property.  To this end, we begin by  recalling basic properties of thick subcategories of derived categories of hereditary abelian categories. Here, an abelian category $\scrA$ is hereditary if $\Ext^i_{\scrA}(A,B)=0$ for all $A,B\in \scrA$ and all $i>1$. Recall that an exact abelian subcategory $\scrW$ of an abelian category $\scrB$ is wide if it is closed under extensions.

Let $\scrA$ be a hereditary abelian category over $k$.
For a collection $\cC$ of objects in $\scrA$, we denote by 
$\wide(\cC)$
the smallest wide subcategory of $\scrA$ containing $\cC$. For a wide subcategory $\scrW\subseteq\scrA$, we define
\[
\D^{\rm b}_{\scrW}(\scrA)\defeq \{F\in \Db(\scrA)\mid H^i(F)\in \scrW \mbox{ for all } i\in \bZ\}.
\]
One can check that $\D^{\rm b}_{\scrW}(\scrA)=\lbr\scrW\rbr\subseteq \Db(\scrA)$ and $\D^{\rm b}_{\scrW}(\scrA)\cong \Db(\scrW)$.
The following is standard.

\begin{prop}\label{prop:wide thick} Notation is the same as above.
\begin{itemize}
\item[$(1)$] There is a bijective correspondence 
\[
\Th(\Db(\scrA))\longleftrightarrow\left\{ \mbox{wide subcategory of } \scrA\right\}
\]
that preserves inclusions. The map from the left to the right is given by $\scrU\mapsto \scrU\cap \scrA$, and the inverse is given by $\scrW\mapsto \D^{\rm b}_{\scrW}(\scrA)=\lbr\scrW\rbr$.
\item[$(2)$] Let $\cC\subset \scrA$ be a collection of objects in $\scrA$. Then 
\[\lbr\wide(\cC)\rbr=\lbr \cC\rbr
 \hspace{3mm}\mbox{ and }\hspace{3mm} \lbr \cC\rbr\cap \scrA=\wide(\cC).
\]
\end{itemize}
\begin{proof}
(1) is \cite[Theorem 5.1]{bru}, and  (2) follows from (1).
\end{proof}
\end{prop}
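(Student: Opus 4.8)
The statement to prove is Proposition~\ref{prop:wide thick}: a bijective, inclusion-preserving correspondence between thick subcategories of $\Db(\scrA)$ and wide subcategories of $\scrA$, for $\scrA$ hereditary abelian, with explicit maps $\scrU \mapsto \scrU \cap \scrA$ and $\scrW \mapsto \D^{\rm b}_{\scrW}(\scrA)$, together with the compatibility statements in part (2). The proof cites \cite[Theorem 5.1]{bru} for part (1), so the real content of a self-contained argument is to reconstruct that correspondence; part (2) is then a formal consequence.

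The plan is as follows. First I would establish the two basic facts already flagged in the text immediately before the proposition: that $\D^{\rm b}_{\scrW}(\scrA) = \lbr \scrW \rbr$ and that $\D^{\rm b}_{\scrW}(\scrA) \cong \Db(\scrW)$. For the first, the inclusion $\D^{\rm b}_{\scrW}(\scrA) \supseteq \lbr \scrW \rbr$ is clear since $\D^{\rm b}_{\scrW}(\scrA)$ is a thick subcategory containing $\scrW$ (it is closed under shifts, cones, and summands because $\scrW$ is closed under extensions, kernels and cokernels of all maps in a hereditary category, and summands); the reverse inclusion uses that any bounded complex $F$ with cohomology in $\scrW$ is, in a hereditary category, quasi-isomorphic to the direct sum of its shifted cohomology objects $\bigoplus_i H^i(F)[-i]$, hence lies in $\lbr \scrW \rbr$. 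That splitting (hereditary $\Rightarrow$ every complex is formal/decomposes into its cohomology) is the single most important input and I would state it explicitly as a lemma. The equivalence $\D^{\rm b}_{\scrW}(\scrA) \cong \Db(\scrW)$ follows because $\scrW$ is itself hereditary abelian and the natural functor $\Db(\scrW) \to \Db(\scrA)$ is fully faithful with essential image $\D^{\rm b}_{\scrW}(\scrA)$ — again using the splitting to reduce $\Hom$-computations to $\Ext$-groups inside $\scrW$, which agree with those in $\scrA$ since $\scrW$ is closed under extensions.

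Next I would check that the two maps are mutually inverse and order-preserving. Order-preservation is immediate from the definitions. For one composite: starting from a wide $\scrW$, one has $\D^{\rm b}_{\scrW}(\scrA) \cap \scrA = \scrW$ — the inclusion $\supseteq$ is obvious, and $\subseteq$ holds because an object of $\scrA$ with cohomology in $\scrW$ (it is its own cohomology) lies in $\scrW$. For the other composite: starting from a thick $\scrU \subseteq \Db(\scrA)$, I must show $\scrU = \D^{\rm b}_{\scrU \cap \scrA}(\scrA)$, and that $\scrU \cap \scrA$ is wide. Wideness of $\scrU \cap \scrA$: given a short exact sequence $0 \to A \to B \to C \to 0$ in $\scrA$ with $A, C \in \scrU$, it gives a triangle in $\Db(\scrA)$, so $B \in \scrU$; hence $\scrU\cap\scrA$ is closed under extensions, and closure under kernels/cokernels of arbitrary maps follows since in a hereditary category the kernel and cokernel of $f\colon A \to B$ appear in triangles built from $A, B$ and the cone of $f$, and one checks these pieces stay in $\scrU$ using heredity. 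For $\scrU = \D^{\rm b}_{\scrU\cap\scrA}(\scrA)$: the inclusion $\subseteq$ is the crucial point — I need every $F \in \scrU$ to have all its cohomology objects $H^i(F)$ inside $\scrU$; this again uses the hereditary splitting $F \cong \bigoplus_i H^i(F)[-i]$, so each $H^i(F)[-i]$ is a summand of $F \in \scrU$, hence in $\scrU$ by thickness, hence $H^i(F) \in \scrU \cap \scrA$. The reverse inclusion $\supseteq$ then follows from $\D^{\rm b}_{\scrW}(\scrA) = \lbr\scrW\rbr$ with $\scrW = \scrU\cap\scrA \subseteq \scrU$.

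Finally, part (2) is formal: $\lbr \cC \rbr \cap \scrA$ is a wide subcategory of $\scrA$ containing $\cC$ (wide by part (1) since $\lbr\cC\rbr$ is thick), so it contains $\wide(\cC)$; conversely $\lbr \wide(\cC)\rbr \subseteq \lbr\cC\rbr$ is clear, and applying the correspondence gives $\wide(\cC) = \lbr\wide(\cC)\rbr \cap \scrA \subseteq \lbr\cC\rbr\cap\scrA$, forcing equality $\lbr\cC\rbr\cap\scrA = \wide(\cC)$ and then $\lbr\cC\rbr = \D^{\rm b}_{\wide(\cC)}(\scrA) = \lbr\wide(\cC)\rbr$. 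I expect the main obstacle to be the careful verification that $\scrU \cap \scrA$ is closed under kernels and cokernels of arbitrary morphisms (not just extensions) and, relatedly, making fully precise the ``every object of $\Db$ of a hereditary category splits as the sum of its cohomology'' lemma and deploying it uniformly; everything else is bookkeeping with triangles and thickness. Of course, as the paper does, one may simply cite \cite[Theorem 5.1]{bru} for part (1) and derive part (2) in two lines as above.
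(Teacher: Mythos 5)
Your reconstruction of Br\"uning's Theorem~5.1 is correct and identifies the right key input: over a hereditary abelian category every bounded complex decomposes as the direct sum of its shifted cohomology objects. That single fact is what makes $\scrU\cap\scrA$ wide (the kernel and cokernel of a morphism $f$ in $\scrU\cap\scrA$ appear, up to shift, as direct summands of the cone of $f$, which lies in $\scrU$) and what gives $\scrU=\D^{\rm b}_{\scrU\cap\scrA}(\scrA)$ and $\D^{\rm b}_{\scrW}(\scrA)=\lbr\scrW\rbr$. The paper simply outsources part~(1) to the citation and derives~(2) formally, so you are not taking a different route; you are proving the cited theorem.

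One small slip in your part~(2): after establishing $\wide(\cC)\subseteq\lbr\cC\rbr\cap\scrA$ (hence $\lbr\wide(\cC)\rbr\subseteq\lbr\cC\rbr$), the line ``applying the correspondence gives $\wide(\cC)=\lbr\wide(\cC)\rbr\cap\scrA\subseteq\lbr\cC\rbr\cap\scrA$'' only reproves the inclusion you already had and cannot ``force equality.'' The missing (trivial) direction is $\cC\subseteq\wide(\cC)$, so $\lbr\cC\rbr\subseteq\lbr\wide(\cC)\rbr$, and hence $\lbr\cC\rbr\cap\scrA\subseteq\lbr\wide(\cC)\rbr\cap\scrA=\wide(\cC)$ by part~(1); combined with your first inclusion this gives both equalities at once. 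Everything else checks out.
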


{By work of Gabriel, every finite-dimensional hereditary algebra $\Lambda$ over $k$ is Morita equivalent to the path algebra $kQ$ of a finite acyclic quiver $Q$. Since we are only interested in (derived) module categories, in the following subsection, we can always work with path algebras $kQ$.}

Let $Q$ be a finite acyclic quiver with $n$ vertices. Then the abelian category $\fmod kQ$ is hereditary. Since the indecomposable projective $kQ$-modules corresponding to vertices form a full exceptional sequence of $\Db(kQ)$, we have $\ell(\Db(kQ))\leq n<\infty$. We recall  the notion of semibricks.

\begin{dfn}
An  object $B\in\fmod kQ$ is called a {\it brick} if $\End_{kQ}(B)\cong k$. Denote by $\brick kQ$ the set of isomorphism classes of bricks in $\fmod kQ$.
A finite subset $\cS=\{B_1,\hdots,B_r\}\subset \brick kQ$ is called a {\it semibrick} in $\fmod kQ$ if $\Hom(B_i,B_j)=0$ for $i\neq j$.
\end{dfn}

Since $\fmod kQ$ is hereditary, the lattice $\Th(\Db(kQ))$ is isomorphic to the lattice of wide subcategories of $\fmod kQ$. 
Furthermore, by {\cite[Section 1.2]{ringel}, finitely generated wide subcategories of $\fmod kQ$  correspond to semibricks in $\fmod kQ$. Therefore, the following holds.

\begin{prop}[cf.\ {\cite[Proposition 3.10]{el}}]\label{prop:unique semibrick}
For a finitely generated thick subcategory $\scrU\in \Th(\Db(kQ))$, there exists a unique semibrick $\cS\subset \brick kQ$ such that $\scrU=\lbr\cS\rbr$. 
\end{prop}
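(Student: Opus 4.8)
The plan is to transport the statement across the bijection of Proposition \ref{prop:wide thick}(1) and then invoke the classification of finitely generated wide subcategories of $\fmod kQ$ by semibricks. First I would check that, under the correspondence $\scrU\mapsto\scrW_{\scrU}\defeq\scrU\cap\fmod kQ$ of Proposition \ref{prop:wide thick}(1), the property ``finitely generated'' is preserved in both directions. If $\scrU=\lbr G\rbr$ for some $G\in\Db(kQ)$, then setting $\cC\defeq\{H^i(G)\mid i\in\bZ\}$ (a finite collection, since $G$ is a bounded complex) we have $\lbr\cC\rbr=\lbr G\rbr=\scrU$, whence $\scrW_{\scrU}=\scrU\cap\fmod kQ=\lbr\cC\rbr\cap\fmod kQ=\wide(\cC)$ by Proposition \ref{prop:wide thick}(2); thus $\scrW_{\scrU}$ is a finitely generated wide subcategory. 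Conversely, if $\scrW=\wide(\cC_0)$ for a finite $\cC_0$, then $\lbr\cC_0\rbr=\lbr\wide(\cC_0)\rbr=\lbr\scrW\rbr$ is a finitely generated thick subcategory corresponding to $\scrW$.

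Next I would recall (following \cite[Section 1.2]{ringel}, cf. \cite[Proposition 3.10]{el}) that a finitely generated wide subcategory $\scrW\subseteq\fmod kQ$ is an abelian length category with finitely many simple objects, and that the set $\cS_{\scrW}$ of isomorphism classes of its simple objects is a semibrick in $\fmod kQ$. Indeed, since the inclusion $\scrW\hookto\fmod kQ$ is exact, monomorphisms and epimorphisms in $\scrW$ remain such in $\fmod kQ$, so Schur's lemma together with $k=\overline k$ gives $\End_{kQ}(S)\cong k$ for every simple object $S$ of $\scrW$, and for two non-isomorphic simples of $\scrW$ the image (taken in $\scrW$) of any map between them is zero; moreover $\scrW=\wide(\cS_{\scrW})$, because in a length category every object is a finite iterated extension of simples and $\wide(\cS_{\scrW})$ is closed under extensions. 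This yields existence: $\scrU=\lbr\scrW_{\scrU}\rbr=\lbr\wide(\cS_{\scrW_{\scrU}})\rbr=\lbr\cS_{\scrW_{\scrU}}\rbr$ by Proposition \ref{prop:wide thick}(2).

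For uniqueness I would use that $\cS_{\wide(\cS)}=\cS$ for any semibrick $\cS$ --- i.e. the members of a semibrick are exactly the simple objects of the wide subcategory they generate, which is again part of the cited correspondence. Then, if $\lbr\cS\rbr=\lbr\cS'\rbr=\scrU$ for semibricks $\cS,\cS'$, Proposition \ref{prop:wide thick}(2) gives $\wide(\cS)=\scrU\cap\fmod kQ=\wide(\cS')$, and passing to simple objects on both sides forces $\cS=\cS'$. The only point requiring genuine care is the passage between the two finiteness conditions and the verification that the semibrick is recovered canonically as the set of simples of the generated wide subcategory; beyond that, the argument is formal from Proposition \ref{prop:wide thick} and the Ringel correspondence, so I do not anticipate a serious obstacle.
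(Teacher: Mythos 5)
Your proposal is correct and takes essentially the same route as the paper: transport $\scrU$ across the thick--wide bijection of Proposition \ref{prop:wide thick}, then invoke Ringel's classification of finitely generated wide subcategories of $\fmod kQ$ by semibricks (cf.\ \cite{ringel}, \cite{el}), which is exactly how the paper derives the statement. You fill in details the paper leaves implicit (e.g.\ that finite generation transfers in both directions via $G\cong\bigoplus_i H^i(G)[-i]$ over the hereditary algebra $kQ$, and that $\cS_{\wide(\cS)}=\cS$), but the decomposition of the argument is the same.
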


We say that a module $E\in \fmod kQ$ is {\it exceptional} if it is exceptional as an object in $\Db(\fmod kQ)$. For an exceptional module $E\in\fmod kQ$, we define 
\[E_{\fmod}^{\perp}\defeq\{ M\in\fmod kQ\mid \Ext^i(E,M)=0 \mbox{ for } i=0,1\}\subsetneq \fmod kQ.\]
One can check that $E_{\fmod}^{\perp}$ is a wide subcategory of $\fmod kQ$. We need the following lemmas.

\begin{lem}[{\cite[Theorem 2.3]{sch}}]\label{lem:sch}
Let $E\in\fmod kQ$ be an exceptional module. There exists a finite acyclic quiver $Q'$ with $n-1$ vertices  such that 
$E_{\fmod}^{\perp}$
is equivalent to $\fmod kQ'$.
\end{lem}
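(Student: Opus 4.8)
The plan is to run the perpendicular category argument of Schofield. Write $A\defeq kQ$ and $\scrW\defeq E_{\fmod}^{\perp}$. First I would record the basic structure of $\scrW$: since $\fmod A$ is hereditary, $\Ext^{\geq 2}_A(E,-)=0$, so the long exact sequences of $\Hom_A(E,-)$ and $\Ext^1_A(E,-)$ show that $\scrW$ is closed under subobjects, quotients and extensions in $\fmod A$. Hence $\scrW$ is a wide, in particular exact abelian, subcategory; as a wide subcategory of the hereditary length category $\fmod A$ it is itself hereditary, all of its objects have finite length, and it has finitely many isomorphism classes of simple objects.

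Next I would construct a left adjoint (reflection functor) $\ell\colon\fmod A\to\scrW$ to the inclusion $\iota\colon\scrW\hookto\fmod A$ by the standard two-step universal extension. Given $M\in\fmod A$, first form the universal extension $0\to M\to M'\to E^{\oplus r}\to 0$ with $r\defeq\dim_k\Ext^1_A(E,M)$, classified by a basis of $\Ext^1_A(E,M)$; its connecting map is surjective, so $\Ext^1_A(E,M')=0$. Then set $\ell(M)\defeq M'/U$, where $U\subseteq M'$ is the sum of the images of all morphisms $E\to M'$; using that $U$ is a quotient of a direct sum of copies of $E$, together with $\Ext^1_A(E,E)=0=\Ext^1_A(E,M')$ and heredity ($\Ext^2_A(E,-)=0$), one checks $\Hom_A(E,\ell M)=0=\Ext^1_A(E,\ell M)$, so $\ell(M)\in\scrW$; the construction is functorial and $M\to\ell(M)$ is the unit of an adjunction $\ell\dashv\iota$. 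Being a left adjoint of an exact, fully faithful functor, $\ell$ is right exact, essentially surjective, and sends projectives of $\fmod A$ to projectives of $\scrW$.

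Applying $\ell$ to the projective generator $A=P_1\oplus\cdots\oplus P_n$ of $\fmod A$ yields $P\defeq\ell(A)$, which is projective in $\scrW$ and a generator of $\scrW$ (since $\ell$ is right exact and essentially surjective). Thus $\scrW$ is a finite-length abelian category with a finite-length projective generator, hence $\scrW\simeq\fmod B$ for $B\defeq\End_{\scrW}(P)^{\mathrm{op}}$; as $\scrW$ is hereditary, $B$ is a finite-dimensional hereditary algebra, and over the algebraically closed field $k$ it is Morita equivalent to $kQ'$ for a finite acyclic quiver $Q'$, so $E_{\fmod}^{\perp}=\scrW\simeq\fmod kQ'$. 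To count the vertices of $Q'$: since $\fmod A$ is hereditary, every object of $\Db(A)$ is a finite direct sum of shifts of its cohomology modules, so the right perpendicular $E^{\perp}\subseteq\Db(A)$ (formed in the derived category) equals $\{X\mid H^i(X)\in\scrW\text{ for all }i\}$; in particular the standard t-structure restricts to a bounded one on $E^{\perp}$ with heart $\scrW$, giving $K_0(\scrW)\cong K_0(E^{\perp})$. Since $\lbr E\rbr\simeq\Db(k)$ is admissible in $\Db(A)$ with $\Db(A)=\langle\lbr E\rbr,E^{\perp}\rangle$, we have $\rk K_0(\Db(A))=1+\rk K_0(E^{\perp})$, whence $\rk K_0(\fmod kQ')=\rk K_0(\scrW)=\rk K_0(E^{\perp})=n-1$, i.e.\ $Q'$ has $n-1$ vertices.

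The technical heart is the reflection functor in the second step: checking that the two-step universal-extension procedure is well defined, functorial, and genuinely lands in $\scrW$, and that it yields the adjunction $\ell\dashv\iota$. This is the only place where heredity and the exceptionality of $E$ (so that $\Hom_A(E,E)=k$ and the universal extensions behave) enter in an essential way. The remaining ingredients — the formal properties of wide subcategories, the recognition of a finite-length category with a projective generator as a module category, the passage to a basic hereditary algebra over an algebraically closed field, and the $K$-theoretic count — are routine.
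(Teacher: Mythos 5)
The paper gives no proof of this lemma; it quotes it directly from Schofield's \cite[Theorem~2.3]{sch}, and your strategy — right perpendicular category, reflection (left adjoint) functor built from the two-step universal extension, projective generator $\ell(A)$, Morita theory, and the $K_0$ count via the semi-orthogonal decomposition of $\Db(kQ)$ — is essentially the Geigle--Lenzing/Schofield argument.

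There is, however, a genuine error in your first paragraph. You claim the long exact sequences show that $\scrW = E_{\fmod}^{\perp}$ is closed under subobjects and quotients. This is false. From $0\to N\to M\to M/N\to 0$ with $M\in\scrW$ one obtains $\Hom(E,N)=0$ and $\Ext^1(E,M/N)=0$, but also $\Hom(E,M/N)\cong\Ext^1(E,N)$, which need not vanish. Concretely, let $Q$ be the $A_2$ quiver $1\to 2$, let $S_1,S_2$ be the simples and $P_1$ the indecomposable projective at vertex $1$, and take $E=S_1$: then $P_1\in E_{\fmod}^{\perp}$, but its submodule $S_2$ is not in $E_{\fmod}^{\perp}$ (since $\Ext^1(S_1,S_2)\cong k$) and neither is the quotient $P_1/S_2\cong S_1$. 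So $E_{\fmod}^{\perp}$ is \emph{not} a Serre subcategory. What the long exact sequences do give — running them once more on the image of a morphism between two objects of $\scrW$ — is closure under kernels, cokernels and extensions, i.e.\ wideness, and wideness is all your subsequent steps actually use: the reflection functor, the projective generator argument (which needs only that short exact sequences in $\scrW$ are those of $\fmod kQ$ with all three terms in $\scrW$) and the $K_0$ count never invoke closure under arbitrary subobjects or quotients. You should also drop or justify the passing claim that $\scrW$ has ``finitely many isomorphism classes of simple objects'' at that stage: an $\scrW$-simple object need not be $kQ$-simple, and this finiteness only comes for free once the Morita equivalence is established — but since you do not use it, the proof survives once the subobject/quotient assertion is corrected to the wide-subcategory statement.
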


\begin{lem}[{\cite[Lemma 5]{cb}}]\label{lem:cb}
Let $\cE=\{E_1,\hdots,E_r\}\subset \fmod kQ$. If $E_1,\hdots,E_r$ forms an exceptional sequence in $\Db(kQ)$, then there exists a finite acyclic quiver $Q'$ with $r$ vertices such that $\wide(\cE)\cong \fmod kQ'$. 
\end{lem}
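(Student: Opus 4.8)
The plan is to argue by induction on $r$, peeling off one object at a time using Schofield's description of perpendicular categories (Lemma \ref{lem:sch}). For the base case $r=1$, an exceptional module $E_1$ is a brick with $\Ext^1_{kQ}(E_1,E_1)=0$, so every self-extension of $E_1$ splits and every morphism between finite direct sums of $E_1$ is given by a matrix over $k=\End(E_1)$; hence $\wide(E_1)=\add(E_1)\cong\fmod k$, the module category of the one-vertex quiver. For the inductive step, suppose $r\ge 2$ and the statement holds for exceptional collections of modules of length $<r$ over any finite acyclic quiver. Since $(E_1,\dots,E_r)$ is an exceptional collection and $\fmod kQ$ is hereditary, $\Hom(E_r,E_i)=\Ext^1(E_r,E_i)=0$ for all $i<r$, so $E_1,\dots,E_{r-1}$ all lie in the right perpendicular category $\scrW\defeq (E_r)^{\perp}_{\fmod}$. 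By Lemma \ref{lem:sch}, $\scrW\cong\fmod kQ''$ for a finite acyclic quiver $Q''$ with $n-1$ vertices. As $\scrW$ is extension-closed in $\fmod kQ$ it is again hereditary, and $\Hom$- and $\Ext^1$-groups among $E_1,\dots,E_{r-1}$ agree whether computed in $\scrW$ or in $\fmod kQ$; thus $(E_1,\dots,E_{r-1})$ is an exceptional collection in $\Db(kQ'')$. The induction hypothesis gives $\wide_{\scrW}(E_1,\dots,E_{r-1})\cong\fmod kQ'''$ for a finite acyclic $Q'''$ with $r-1$ vertices, and since a wide subcategory of $\scrW$ is exactly a wide subcategory of $\fmod kQ$ contained in $\scrW$, this category equals $\wide(E_1,\dots,E_{r-1})$.

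It remains to add $E_r$ back and show that $\wide(E_1,\dots,E_r)$ is equivalent to $\fmod kQ'$ for a finite acyclic $Q'$ with $r$ vertices. The cleanest route I know is to first complete $(E_1,\dots,E_r)$ to a complete exceptional collection $(E_1,\dots,E_n)$ of modules inside $\fmod kQ$ (exceptional collections in a hereditary module category refine to complete ones within it), and then to identify $\wide(E_1,\dots,E_r)=\bigcap_{j>r}(E_j)^{\perp}_{\fmod}$: the inclusion $\subseteq$ is just the semiorthogonality $\Hom^{\bullet}(E_j,E_i)=0$ for $j>i$, while for $\supseteq$ one applies Lemma \ref{lem:sch} successively $n-r$ times to see that $\bigcap_{j>r}(E_j)^{\perp}_{\fmod}\cong\fmod kQ'$ with $n-(n-r)=r$ vertices, and that a length-$r$ exceptional collection inside such a category already generates it. The number $r$ can moreover be read off intrinsically: by the identity $\lbr\cE\rbr=\lbr\wide(\cE)\rbr$ (Proposition \ref{prop:wide thick}(2)) together with $\lbr\wide(\cE)\rbr\cong\Db(\wide(\cE))$, and the fact that $(E_1,\dots,E_r)$ is a full exceptional collection of $\lbr\cE\rbr$, one gets $\rk K_0(\wide(\cE))=r$, which pins down the number of vertices.

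The main obstacle for a fully self-contained proof is precisely this last step. Everything up to the completion is formal once Lemma \ref{lem:sch} is granted, but showing that an exceptional collection in $\fmod kQ$ refines to a complete one within the module category, and that the wide subcategory it generates is the intersection of the perpendicular categories of the complementary modules, is the substantive ``perpendicular calculus'' for exceptional collections — exactly the content of Crawley--Boevey's argument in \cite{cb} (cf.\ also Schofield's perpendicular decomposition), and it is where one must go beyond the single-object statement of Lemma \ref{lem:sch}. A more hands-on alternative would be to analyse the one-sided semiorthogonality $\Hom^{\bullet}(E_r,\wide(E_1,\dots,E_{r-1}))=0$ directly and build a projective generator of $\wide(E_1,\dots,E_r)$, using exceptionality of $E_r$ (i.e.\ $\Ext^1(E_r,E_r)=0$) to rule out a loop at the new vertex; but making this precise turns out to be essentially as involved as the perpendicular calculus.
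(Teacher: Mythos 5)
The paper does not prove this lemma; it is a direct citation of \cite[Lemma 5]{cb}, so there is no in-paper proof to compare against. Your reconstruction follows the structure of Crawley--Boevey's argument (Schofield's perpendicular calculus), and you are candid about where the substance lies, which is a fair assessment. One point to flag: your first induction does not close as written. Removing $E_r$, placing $E_1,\dots,E_{r-1}$ inside $(E_r)^{\perp}_{\fmod}$, and applying the inductive hypothesis controls $\wide(E_1,\dots,E_{r-1})$ but gives no direct handle on $\wide(E_1,\dots,E_r)$ --- adding $E_r$ back \emph{is} the hard step --- and you rightly abandon that route for a different one.

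The pivoted argument is the right one, but it rests on two facts that are strictly beyond Lemma~\ref{lem:sch}: (i) every exceptional sequence of modules in $\fmod kQ$ extends to a complete exceptional sequence consisting of modules (Crawley--Boevey/Ringel), and (ii) the identification $\wide(\cE)=\bigcap_{j>r}(E_j)^{\perp}_{\fmod}$, whose $\supseteq$ direction needs that a length-$r$ exceptional sequence of modules in a hereditary module category of rank $r$ already generates it. You identify both and correctly observe that they are ``exactly the content of Crawley--Boevey's argument.'' The base case $\wide(E_1)=\add(E_1)\cong\fmod k$ is fine, as is the $K_0$-rank observation via $\lbr\cE\rbr=\lbr\wide(\cE)\rbr\cong\Db(\wide(\cE))$ (though that only pins down the rank, not that $\wide(\cE)$ is a module category). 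In sum: the proposal is an accurate and self-aware outline of the proof of the cited lemma rather than an independent proof; the two gaps you flag are genuine, and filling them essentially amounts to reproving \cite{cb}.
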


{We say that a wide subcategory $\scrW\subsetneq\scrA$ of an abelian category $\scrA$ is {\it maximal} if there is no wide subcategory $\scrX$ with $\scrW\subsetneq\scrX\subsetneq\scrA$.}
We reformulate {\cite[Theorem 3.14]{asai}} -- it is the main ingredient to show the JD property of $\Db(kQ)$.

\begin{thm}[{\cite[Theorem 3.14]{asai}}]\label{thm:asai}
Let $\cS=\{B_1,\hdots,B_r\}$ be a semibrick in $\fmod kQ$ such that the wide subcategory {$\wide(\cS)\subsetneq \fmod kQ$} is maximal. Then each brick $B_i$ is exceptional for $1\leq i\leq r$.
\end{thm}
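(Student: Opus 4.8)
\textbf{Proof proposal for Theorem \ref{thm:asai} (as stated, following Asai).}
The plan is to reduce the statement to a numerical/counting argument using the correspondence between finitely generated wide subcategories and semibricks (Proposition \ref{prop:unique semibrick}) together with the ``one-point reduction'' Lemmas \ref{lem:sch} and \ref{lem:cb}. First I would recall that a maximal wide subcategory $\scrW = \wide(\cS)$ of $\fmod kQ$ must have finite ``rank'': since $\wide(\cS)$ is equivalent to $\fmod kQ'$ for some finite acyclic quiver $Q'$ whenever $\cS$ is a semibrick consisting of an exceptional collection, the expected shape of the answer is that $\scrW$ is equivalent to $\fmod kQ'$ with $Q'$ having $n-1$ vertices. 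So the real content is: a semibrick generating a \emph{maximal} proper wide subcategory must in fact be an exceptional collection (after reordering), and then have exactly $n-1$ elements.

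The key steps, in order, would be: (i) Show that any maximal wide subcategory $\scrW \subsetneq \fmod kQ$ is finitely generated — this should follow because $\fmod kQ$ itself is generated by the $n$ simples, hence any proper wide subcategory omits some object, and maximality forces $\scrW$ to be ``large'', so one argues (via Proposition \ref{prop:wide thick} and the lattice isomorphism $\Th(\Db(kQ)) \cong \{\text{wide subcategories}\}$) that $\scrW$ corresponds to a maximal proper thick subcategory, which by Proposition \ref{prop:fg max}-type reasoning (or directly, since $\fmod kQ$ has a finite generating set) is finitely generated. (ii) Apply Proposition \ref{prop:unique semibrick} to write $\scrW = \lbr \cS \rbr$ with $\cS = \{B_1,\dots,B_r\}$ a semibrick uniquely. (iii) Induct on $n$: pick $B_1 \in \cS$; since $\scrW \subsetneq \fmod kQ$ there is a simple (or an exceptional module) $E$ not in $\scrW$ — here one wants $B_1$ itself, or some brick in $\cS$, to be exceptional, which is where the maximality hypothesis is used crucially. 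Passing to $E_{\fmod}^\perp \cong \fmod kQ'$ via Lemma \ref{lem:sch} and checking that $\scrW \cap E_{\fmod}^\perp$ is again a maximal wide subcategory of $\fmod kQ'$, one reduces the number of vertices and concludes by induction that all $B_i$ are exceptional.

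The main obstacle I anticipate is step (iii): showing that at least one brick in a semibrick generating a \emph{maximal} wide subcategory is exceptional, so that the perpendicular-category reduction can even be started. A priori a brick need not be exceptional (it can have self-extensions), and a semibrick generating a large wide subcategory could conceivably consist entirely of non-exceptional bricks. The resolution presumably uses a characterization of maximal wide subcategories — e.g. that a proper wide subcategory $\scrW$ is maximal iff $\scrW = E_{\fmod}^\perp$ for some exceptional $E$ (or $\scrW = {}^\perp E$), combined with the fact that $\fmod kQ'$ for $Q'$ with $n-1$ vertices has $K_0$ of rank $n-1$, forcing $r = n-1$ and then running the induction so that each $B_i$ becomes a simple in some $\fmod kQ'$, hence exceptional. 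One then needs to verify compatibility of the semibrick structure under the equivalence $E_{\fmod}^\perp \cong \fmod kQ'$, which is routine given Lemmas \ref{lem:sch} and \ref{lem:cb}. Since the statement is quoted verbatim from \cite{asai}, in the paper itself this ``proof'' is simply a citation; the above is the argument I would reconstruct if asked to prove it from scratch.
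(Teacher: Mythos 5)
Since the paper uses this result purely as a citation to Asai's Corollary 1.2 and does not reprove it, there is no internal argument of the paper to compare your sketch against; the comparison can only be between your reconstruction and Asai's own proof.

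As you acknowledge, the crux of your plan --- step (iii) --- is not established. You posit that a maximal finitely generated wide subcategory of $\fmod kQ$ must be a perpendicular category $E_{\fmod}^{\perp}$ of a single exceptional module, and then run a one-point induction via Lemma \ref{lem:sch}. But that characterization is not a consequence of Propositions \ref{prop:wide thick}, \ref{prop:unique semibrick} or Lemmas \ref{lem:sch}, \ref{lem:cb}; establishing it is essentially equivalent to the statement to be proved, so the sketch is circular precisely at the decisive point. Nothing in the listed tools rules out a priori that a finite semibrick consisting entirely of non-exceptional bricks could generate a maximal wide subcategory --- excluding that possibility is the actual content of Asai's result, and your proposal does not prove it.

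Two smaller remarks. First, step (i) is unnecessary: the hypothesis already hands you a finite semibrick $\cS$, so $\wide(\cS)$ is finitely generated by fiat and Proposition \ref{prop:unique semibrick} applies directly. It is also not obviously true that every maximal wide subcategory of $\fmod kQ$ is finitely generated, so you should not lean on that as a general fact. Second, the implicit counting ``\,$r=n-1$'' needs care: for non-Dynkin $Q$ a semibrick can have strictly more than $n$ elements (pairwise orthogonal regular simple modules over the Kronecker quiver, for instance), because the map $K_0(\wide(\cS))\to K_0(\fmod kQ)$ need not be injective when the bricks are not exceptional. The bound on $r$ is a consequence of the theorem (via Lemma \ref{lem:cb}, once the bricks are known exceptional), not an input to it. In short, your outline correctly identifies where the difficulty sits, but as written it black-boxes that difficulty rather than resolving it.
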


Using the above results, we prove the following.

\begin{prop}\label{prop:wild max}
Assume that $Q$ has $n>1$ vertices. Let $\scrM\subsetneq\Db(kQ)$ be a  maximal thick subcategory. If $\scrM$ is finitely generated, it is generated by an exceptional sequence $E_1,\hdots,E_{n-1}\in\scrM \cap \fmod kQ$ of length $n-1$. 
\begin{proof}
We use induction on $n$. Assume $n=2$. By Proposition \ref{prop:wide thick} and Proposition \ref{prop:unique semibrick}, $\scrM=\lbr\cS\rbr$ for a unique semibrick $\cS$ such that the wide subcategory $\wide(\cS) \subsetneq \fmod kQ$ is maximal. By Theorem \ref{thm:asai}, $\scrM$ contains an exceptional object $E\in \fmod kQ$. Then there is a semi-orthogonal decomposition 
\[
\Db(kQ)=\l E^{\perp},E\r,
\]
where $E^{\perp}=\lbr E_{\fmod}^{\perp}\rbr$.
By Lemma \ref{lem:sch}, there is an exact equivalence $E_{\fmod}^{\perp}\cong \fmod k$. Thus, the wide subcategory $E_{\fmod}^{\perp}\subset \fmod kQ$ is simple as it is generated by an exceptional object $F\in\fmod kQ$. Therefore, 
the thick subcategory $\lbr E\rbr\subset \Db(kQ)$ is  maximal. Since $\lbr E\rbr\subseteq \scrM$, we obtain $\lbr E\rbr=\scrM$, which proves the case when $n=2$.

Assume that the result holds for $n-1$.  By a similar argument as above,  we see that $\scrM$ contains an exceptional module $E\in \fmod kQ$, and there is a finite acyclic quiver $Q'$ with $n-1$ vertices such that $E_{\fmod}^{\perp}\cong \fmod kQ'$. Put
\[
\scrM'\defeq \scrM\cap \lbr E_{\fmod}^{\perp}\rbr\subseteq \lbr E_{\fmod}^{\perp}\rbr,
\]
and consider $\scrM'$ as a thick subcategory of $\Db(kQ')$ via a natural equivalence $\lbr E_{\fmod}^{\perp}\rbr\cong \Db(kQ')$. We define a functor $F\colon \Db(kQ)\to\Db(kQ')$ by the composition
\[
 \Db(kQ)\surjto \Db(kQ)/\lbr E\rbr\simto\lbr E_{\fmod}^{\perp}\rbr\simto \Db(kQ'),
\]
where the first functor is the Verdier quotient functor. Since $F^{-1}(\scrM')=\scrM$, $\scrM'$ is maximal in $\Db(kQ')$ by Proposition \ref{verdier}. By the induction hypothesis, there is a full exceptional sequence $F_1,\hdots,F_{n-2}\in \fmod kQ'$ of $\scrM'$. By the semi-orthogonal decomposition
\[
\scrM=\l \scrM\cap \lbr E_{\fmod}^{\perp}\rbr,E\r,
\]
the sequence of modules $F_1,\hdots,F_{n-2},E\in \fmod kQ$ forms a full exceptional sequence of $\scrM$.
\end{proof}
\end{prop}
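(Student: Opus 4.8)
The plan is to prove the statement by induction on the number $n$ of vertices of $Q$: at each stage I would locate a single exceptional module sitting inside the maximal subcategory $\scrM$, split it off, and pass to its perpendicular subcategory, which is again the bounded derived category of a path algebra of an acyclic quiver with one fewer vertex.

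The crucial first step is to produce an exceptional module inside $\scrM$. Since $\fmod kQ$ is hereditary, Proposition \ref{prop:wide thick} identifies $\Th(\Db(kQ))$ with the lattice of wide subcategories of $\fmod kQ$, so the maximal thick subcategory $\scrM$ corresponds to a maximal wide subcategory $\scrW = \scrM\cap\fmod kQ$. As $\scrM$ is finitely generated, Proposition \ref{prop:unique semibrick} supplies a unique semibrick $\cS$ with $\scrM = \lbr\cS\rbr$ and $\wide(\cS) = \scrW$, and then Asai's theorem (Theorem \ref{thm:asai}) forces every brick of $\cS$ to be exceptional. In particular $\scrM$ contains an exceptional module $E\in\fmod kQ$, and one has a semi-orthogonal decomposition $\Db(kQ) = \langle E^{\perp},E\rangle$ with $E^{\perp} = \lbr E_{\fmod}^{\perp}\rbr$. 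When $n=2$, Lemma \ref{lem:sch} gives $E_{\fmod}^{\perp}\cong\fmod k$, which is simple; hence $\lbr E\rbr$ is already maximal in $\Db(kQ)$, and from $\lbr E\rbr\subseteq\scrM\subsetneq\Db(kQ)$ and maximality we get $\lbr E\rbr = \scrM$, generated by the length-one exceptional collection $(E)$.

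For the inductive step, assume the result for acyclic quivers on $n-1$ vertices. By Lemma \ref{lem:sch} there is an acyclic quiver $Q'$ on $n-1$ vertices with $\lbr E_{\fmod}^{\perp}\rbr\cong\Db(kQ')$, and I would form the composite functor
\[
F\colon \Db(kQ)\surjto\Db(kQ)/\lbr E\rbr\simto\lbr E_{\fmod}^{\perp}\rbr\simto\Db(kQ').
\]
Setting $\scrM'\defeq\scrM\cap\lbr E_{\fmod}^{\perp}\rbr$ and regarding it inside $\Db(kQ')$, one checks $F^{-1}(\scrM') = \scrM$, so Proposition \ref{verdier} shows $\scrM'$ is maximal in $\Db(kQ')$; it is finitely generated since $\scrM$ is. Applying the inductive hypothesis gives a full exceptional collection $F_1,\hdots,F_{n-2}\in\fmod kQ'$ of $\scrM'$, and combining this with the semi-orthogonal decomposition $\scrM = \langle\scrM\cap\lbr E_{\fmod}^{\perp}\rbr,E\rangle$ yields that $F_1,\hdots,F_{n-2},E\in\fmod kQ$ is a full exceptional collection of $\scrM$ of length $n-1$.

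The main obstacle is really the first step: there is no formal reason why an arbitrary maximal thick subcategory of $\Db(kQ)$ should contain an exceptional object, since a maximal wide subcategory might a priori be generated by non-exceptional bricks. This is precisely what Asai's theorem resolves, so the argument depends on first reducing — using heredity — to the combinatorics of maximal wide subcategories and their semibricks. The rest is careful bookkeeping with perpendicular categories: that $\lbr E_{\fmod}^{\perp}\rbr$ is equivalent to $\Db(kQ')$ with the module-category heart matching up, that $\scrM'$ pulls back to $\scrM$ along $F$ so that Proposition \ref{verdier} transports maximality, and that the transported collection $F_1,\hdots,F_{n-2}$ together with $E$ genuinely lies in the heart $\fmod kQ$ and stays exceptional — all of which are standard once Lemma \ref{lem:sch} and the properties of semi-orthogonal decompositions are in hand.
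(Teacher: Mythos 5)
Your argument is correct and follows essentially the same route as the paper's own proof: reduce via heredity and Proposition \ref{prop:wide thick} to maximal wide subcategories, use the semibrick description and Asai's theorem to extract an exceptional module $E\in\scrM$, pass to $E_{\fmod}^\perp\cong\fmod kQ'$ via Lemma \ref{lem:sch}, and induct, transporting maximality along the Verdier quotient with Proposition \ref{verdier}. The one small thing you add beyond the paper's write-up is the explicit observation that $\scrM'=\scrM\cap\lbr E_{\fmod}^{\perp}\rbr$ is finitely generated; this is indeed needed to invoke the inductive hypothesis, and it does follow from the semi-orthogonal decomposition $\scrM=\langle\scrM',E\rangle$ (the projection onto $\scrM'$ is essentially surjective, so a split generator of $\scrM$ projects to one of $\scrM'$), so your proof is, if anything, slightly more careful at that step.
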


{Now we are ready to prove the JD property of $\Db(kQ)$ by classifying all composition series in $\Db(kQ)$. We say that a composition series is {\it exceptional} if it is built from a full exceptional sequence, i.e., it is of the form as in \eqref{eq:fec}.}

\begin{thm}\label{thm:JD wild} 
Every composition series in $\Db(kQ)$ is exceptional. In particular, $\Db(kQ)$ satisfies the JD property, and $\ell(\Db(kQ))=n$.
\begin{proof}
Let $\scrS_*=\bigl(0=\scrS_0\subsetneq \scrS_1\subsetneq\cdots\subsetneq\scrS_r=\Db(kQ)\bigr)\in\CS(\Db(kQ))$. 

First, we show that, for each $1\leq i\leq r$, the thick subcategory $\scrS_i$  has a full exceptional sequence. We prove this by induction on $n$. The case when $n=1$ is obvious, since $\Db(kQ)\cong \Db(k)$. 
Since $\scrS_{r-1}\subsetneq \Db(kQ)$ is a finitely generated maximal thick subcategory,  Proposition \ref{prop:wild max} implies that there exists an exceptional sequence $\cE=\{E_1,\hdots, E_{n-1}\}\subset \fmod kQ$ such that
$\scrS_{r-1}=\lbr \cE\rbr$. By Lemma \ref{lem:cb}, there exists a finite acyclic quiver $Q'$ with $n-1$ vertices such that $\wide(\cE)\cong \fmod kQ'$. Using Proposition \ref{prop:wide thick}\,(2), we obtain a sequence of equivalences
\[
\scrS_{r-1}=\lbr \cE\rbr=\lbr \wide(\cE)\rbr\cong \Db(\wide(\cE))\cong \Db(kQ').
\]
If we put
$\scrS'_*\defeq\bigl(0\subsetneq \scrS_1\subsetneq\cdots\subsetneq\scrS_{r-1}\cong\Db(kQ')\bigr)$,
the ascending chain $\scrS_*'$ can be considered as a composition series in $\Db(kQ')$. By the induction hypothesis, for $1\leq i\leq r-2$, the thick subcategory $\scrS_i$ is generated by an exceptional sequence.

Since $\scrS_1$ is simple, there is an exceptional module $E_1$ such that $\scrS_1=\lbr E_1\rbr$. Then there is a semi-orthogonal decomposition 
\[
\scrS_2=\l E_1, ^{\perp}\!E_1\r.
\]
Since $\scrS_2$ is admissible in $\Db(kQ)$, so is  $^{\perp}\!E_1\subset \Db(kQ)$. Hence $^{\perp}\!E_1$  is generated by an exceptional sequence by \cite[Theorem A.4]{hk2} (cf.\ \cite[Corollary 3.7]{robotis}). Since $^{\perp}\!E_1\cong \scrS_2/\scrS_1$ is simple, there is an exceptional module $E_2\in \fmod kQ$ such that $E_1,E_2$ is a full  exceptional sequence of $\scrS_2$. Repeating this argument, we obtain the first assertion. The second assertion follows since the length of an exceptional composition series in $\Db(kQ)$ is equal to $\rk K_0(\Db(kQ))=n$.
\end{proof}
\end{thm}

\begin{rem}\label{rem:cps vs nc}
Theorem \ref{thm:JD wild} implies that every composite subcategory of $\Db(kQ)$ is generated by an exceptional sequence of modules. By \cite[Theorem 1.2]{hk2}, there is an isomorphism of posets
\[
\Th^{\rm cps}(\Db(kQ))\cong {\rm NC}(K_0(\fmod kQ)),
\]
where the left hand side is the poset of composite subcategories of $\Db(kQ)$ and the right hand side is the poset of {\it non-crossing partitions} of a generalized Cartan lattice defined on $K_0(\fmod kQ)$. See loc. cit. for the details.
\end{rem}

\subsection{Application of results on hereditary algebras}~

In this section, we provide several applications of Theorem \ref{thm:JD wild}.

\begin{cor}\label{cor:wild kronecker}
Let $\scrT$ be a  
triangulated category with a dg-enhancement. If $\scrT$ admits 
a full strong exceptional sequence  $E_1,E_2$ of length two, then $\scrT$ satisfies the JD property.
\begin{proof}
Since $T\defeq E_1\oplus E_2$ is a tilting generator of $\scrT$, there is an equivalence
\[
\scrT\cong \Perf \End(T)
\]
by Theorem \ref{thm:keller equiv}\,(2).  
If we set $d\defeq \dim \Hom(E_1,E_2)$, then $\End(T)$ is isomorphic to the path algebra $kK_d$ of a $d$-Kronecker quiver
\[
K_d\colon \hspace{3mm}\begin{tikzcd}
    1
    \arrow[r, draw=none, "\raisebox{+1.5ex}{\vdots}" description]
    \arrow[r, bend left,        "\alpha_1"]
    \arrow[r, bend right, swap, "\alpha_{d}"]
    &
    2.
\end{tikzcd}
\]
So $\scrT\cong \Perf(kK_d) \cong \Db(kK_d)$ satisfies the JD property by Theorem \ref{thm:JD wild}.
\end{proof}
\end{cor}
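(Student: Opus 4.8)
The plan is to reduce the statement to the case of a Kronecker quiver and then quote Theorem \ref{thm:JD wild}. First I would observe that $T\defeq E_1\oplus E_2$ is a tilting generator of $\scrT$. Exceptionality of $E_1$ and $E_2$ kills the diagonal self-extensions $\Hom(E_i,E_i[k])$ for $k\neq 0$; semiorthogonality of the collection kills $\Hom(E_2,E_1[k])$ for all $k$; and the hypothesis that the collection is \emph{strong} is exactly what gives $\Hom(E_1,E_2[k])=0$ for $k\neq 0$. Hence $\Hom(T,T[k])=0$ for all $k\neq 0$, and since $E_1,E_2$ split-generate $\scrT$, the object $T$ is a split generator.

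Next, using that $\scrT$ is idempotent complete and carries a dg-enhancement, I would pass to the dg-endomorphism algebra $\REnd(T)$: its cohomology is concentrated in degree $0$ by the previous paragraph, so it is quasi-isomorphic to the ordinary endomorphism algebra $\End(T)$, and the standard tilting/Morita theorem then yields an exact equivalence $\scrT\simeq\Perf\End(T)$. It remains to identify $\End(T)=\End(E_1\oplus E_2)$: as a $k$-algebra it has basis $\id_{E_1}$, $\id_{E_2}$ together with a chosen basis of the $d$-dimensional space $\Hom(E_1,E_2)$, and no two of the ``arrow'' morphisms admit a nonzero composition, so $\End(T)$ is the path algebra $kK_d$ of the $d$-Kronecker quiver. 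Since $kK_d$ is hereditary (hence of finite global dimension), $\Perf kK_d=\Db(kK_d)$, and therefore $\scrT\simeq\Db(kK_d)$.

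Finally I would invoke Theorem \ref{thm:JD wild} for the finite acyclic quiver $K_d$, which has two vertices: every composition series of $\Db(kK_d)$ has length $2$, so $\Db(kK_d)$---and therefore $\scrT$---satisfies the Jordan--Dedekind property.

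The only step that requires genuine care is the passage $\scrT\simeq\Perf\End(T)$: one must cite the correct form of the tilting theorem for a dg-enhanced, idempotent-complete triangulated category with a split generator whose higher self-extensions vanish, and be careful to use the strongness hypothesis in precisely the spot where those higher self-extensions of $T$ must vanish. Everything after that is routine bookkeeping with the path algebra $kK_d$.
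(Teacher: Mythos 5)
Your proposal follows exactly the paper's argument: pass from the strong exceptional pair to the tilting generator $T=E_1\oplus E_2$, obtain $\scrT\cong\Perf\End(T)$ via the dg/tilting machinery, identify $\End(T)$ with $kK_d$, and invoke Theorem~\ref{thm:JD wild}. You merely spell out the vanishing of higher self-extensions and the hereditary property $\Perf kK_d=\Db(kK_d)$ in slightly more detail than the paper does; the route is the same.
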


\begin{rem}
In Proposition \ref{prop:kronecker}, we will see that the assumption of the strongness of the exceptional sequence in Corollary \ref{cor:wild kronecker} is necessary. 
\end{rem}

We will see that the following shows that the derived categories of the projective plane and Hirzebruch surfaces have a different feature.

\begin{cor} \label{cor:admissibleJDP}
Let $0\neq\scrA\subsetneq \Db(\bP^2)$ be a non-trivial admissible subcategory. Then $\scrA$ satisfies the JD property. 
\begin{proof} If $\scrA$ is generated by an exceptional object, it is simple and thus satisfies the JD property.
Otherwise, by \cite{pir}, we can assume that $\scrA$ is generated by an exceptional sequence $E_1,E_2$ of length two, which is a subcollection of a mutation of the full strong exceptional sequence $\cO,\cO(1),\cO(2)$ of $\Db(\bP^2)$. By \cite[Proposition 3.3, Corollary 2.4]{bp2}, all mutations of $\cO,\cO(1),\cO(2)$ are again strong. Therefore, the exceptional sequence  $E_1,E_2$ is strong, and so $\scrA$ satisfies the JD property by Corollary \ref{cor:wild kronecker}.
\end{proof}
\end{cor}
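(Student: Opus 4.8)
The plan is to deduce this directly from the recent classification of admissible subcategories of $\Db(\bP^2)$ together with Corollary \ref{cor:wild kronecker}. First I would recall from Pirozhkov's work \cite{pir} that every non-trivial admissible subcategory $\scrA\subsetneq\Db(\bP^2)$ is generated by an exceptional pair $E_1,E_2$, and moreover that this pair can be taken to be a subcollection of some mutation of the standard full exceptional collection $\cO,\cO(1),\cO(2)$. This reduces the problem to showing that $\scrA$ is generated by a \emph{strong} exceptional pair, since Corollary \ref{cor:wild kronecker} then immediately gives the Jordan--Dedekind property (via the identification $\scrA\cong\Db(kK_d)$ and Theorem \ref{thm:JD wild}).

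The key remaining step is therefore the strongness: I would invoke the fact that every mutation of the full strong exceptional collection $\cO,\cO(1),\cO(2)$ on $\bP^2$ is again strong. This is classical for $\bP^2$ --- it follows from Bondal--Polishchuk type results on helices/mutations for $\bP^2$ (e.g. \cite[Proposition 3.3, Corollary 2.4]{bp2}), where one uses that all exceptional bundles on $\bP^2$ have vanishing higher self- and cross-Ext's in the appropriate order. Since a subcollection of a strong exceptional collection is again strong, the pair $E_1,E_2$ generating $\scrA$ is strong. Then $T\defeq E_1\oplus E_2$ is a tilting object with $\End(T)\cong kK_d$ for $d=\dim\Hom(E_1,E_2)$, so $\scrA\cong\Perf(kK_d)=\Db(kK_d)$, and Corollary \ref{cor:wild kronecker} (equivalently Theorem \ref{thm:JD wild} applied to the Kronecker quiver) finishes the argument.

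The main obstacle --- really the only non-formal input --- is controlling the Ext-vanishing after mutation, i.e.\ that mutating within the exceptional collections of $\bP^2$ preserves strongness; but this is exactly what the cited results of Bondal--Polishchuk provide, so in the write-up I would simply quote them rather than reprove anything. Everything else (passing to subcollections, tilting equivalence, citing Corollary \ref{cor:wild kronecker}) is routine.
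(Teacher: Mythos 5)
Your proposal is correct and follows exactly the same route as the paper: Pirozhkov's classification to reduce to an exceptional pair arising as a subcollection of a mutation of $\cO,\cO(1),\cO(2)$, Bondal--Polishchuk to ensure strongness is preserved under mutation, and then Corollary \ref{cor:wild kronecker} to conclude. The extra remarks on the tilting equivalence simply unpack what that corollary already provides.
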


The following can be seen as a (partial) extension of the corollary above. This might be an indication that the JD property holds for the surfaces in Krah's construction, cf.\ Section \ref{section:Krah's phantom}.

\begin{rem} \label{rem:SODintoJDP}
Let $n \in \bZ_{>0}$, and  
let $X$ be the blow-up of $\bP^2$ in $n$ general closed points. Then there is a semi-orthogonal decomposition
\begin{align*}
\Db(X)=\langle \Db(kT_n), \Db(kK_3) \rangle
\end{align*}
where $T_n$ is a connected quiver with $n$ vertices
and $K_3$ is the $3$-Kronecker quiver, cf.\ \cite[Proof of Proposition 4.2\,(2)]{BelRae} -- here, $\Db(kK_3)$ is generated by the pullbacks of $\cO(1)$ and $\cO(2)$.
In particular, in combination with Theorem \ref{thm:JD wild}, we see that $\Db(X)$ decomposes into two admissible subcategories that satisfy the JD property. One subcategory has a Grothendieck group of rank $n$ and the other has a Grothendieck group of rank $2$. 
Further examples of similar semi-orthogonal decompositions for more general surfaces have also been obtained in \cite{TU}, see in particular, Example 7.4 in loc. cit.
\end{rem}

\noindent
We use Theorem \ref{thm:JD wild} to show the JD property for stack quotients of $\bP^1$.

\begin{cor}\label{C:QuotStacks}
Let $G \subset \mathsf{PGL}_2(\bC)$ be a finite subgroup. Let $\mathbb{X}:=[\bP^1/G]$ be the quotient stack. Then $\Db(\mathbb{X})$ satisfies the JD property.
\end{cor}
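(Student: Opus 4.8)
The plan is to produce an exact equivalence $\Db(\mathbb{X})\cong\Db(kQ)$ for some finite acyclic quiver $Q$, and then to quote Theorem~\ref{thm:JD wild}; the Jordan--Dedekind property transfers along equivalences because an equivalence is fully faithful and dense and hence induces a length-preserving bijection on composition series, as in the proposition on fully faithful dense functors in Section~\ref{section:length} (cf.\ Proposition~\ref{dense}).

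First I would identify the stack $\mathbb{X}$. Since $G\subset\mathsf{PGL}_2(\bC)$ is finite, its action on $\bP^1$ has trivial stabilizer at the generic point and nontrivial stabilizer at only finitely many points; each such stabilizer fixes a point of $\bP^1$, hence acts faithfully on the tangent line there, hence is cyclic. So \'etale-locally around such a point $\mathbb{X}=[\bP^1/G]$ is $[\bA^1/\mu_m]$, which means $\mathbb{X}$ is a smooth proper Deligne--Mumford stack with coarse moduli space $\bP^1$ and cyclic stacky structure -- that is, a Geigle--Lenzing weighted projective line $\bP^1_{p_1,\dots,p_t}$, where $(p_1,\dots,p_t)$ is the tuple of stabilizer orders. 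Going through the classification of finite subgroups of $\mathsf{PGL}_2(\bC)$ (cyclic, dihedral, tetrahedral, octahedral, icosahedral), the weight tuple is one of $(n,n)$, $(2,2,n)$, $(2,3,3)$, $(2,3,4)$, $(2,3,5)$, and in every case $\sum_i(1-1/p_i)=2-2/|G|<2$, so $\mathbb{X}$ is of domestic type.

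Then I would invoke the tilting theory of weighted projective lines: $\Db(\mathbb{X})$ carries a tilting bundle $T$ (a suitable direct sum of line bundles indexed by a fundamental domain of $\Pic(\mathbb{X})$), and $\End_{\mathbb{X}}(T)$ is the associated canonical algebra $\Lambda(p_1,\dots,p_t)$ (which for $t\le 3$ depends only on the weight tuple). In the domestic cases above this algebra is derived equivalent to the path algebra $kQ$ of the corresponding extended Dynkin quiver -- namely $\widetilde{A}_{2n-1}$, $\widetilde{D}_{n+2}$, $\widetilde{E}_6$, $\widetilde{E}_7$, $\widetilde{E}_8$, equipped with an acyclic orientation -- which is classical in the theory of canonical algebras (Geigle--Lenzing, Ringel). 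Since $\End_{\mathbb{X}}(T)$ has finite global dimension and $\Db(\mathbb{X})$ is idempotent complete, the tilting bundle gives
\[
\Db(\mathbb{X})\;\cong\;\Db\!\big(\End_{\mathbb{X}}(T)\big)\;\cong\;\Db(kQ),
\]
with $Q$ a finite acyclic quiver, and Theorem~\ref{thm:JD wild} finishes the argument.

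I do not expect a genuinely hard step here: both inputs -- that $[\bP^1/G]$ is a domestic weighted projective line, and that domestic canonical algebras are derived-hereditary -- are classical, and the only delicate point is bookkeeping, namely matching each $G$ with its weight tuple and with the associated affine Dynkin diagram, and checking the routine hypotheses (finite global dimension, idempotent completeness) that make the displayed chain of equivalences literal rather than only up to idempotent completion. If one wished to avoid weighted projective lines entirely, one could instead construct a $G$-equivariant tilting bundle on $\bP^1$ directly -- a direct sum of line bundles twisted so as to be $G$-linearisable -- and compute its equivariant endomorphism algebra to be an acyclically oriented affine Dynkin path algebra, which again lands us in the setting of Theorem~\ref{thm:JD wild}.
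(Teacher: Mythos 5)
Your proof is correct and follows essentially the same approach as the paper: identify $[\bP^1/G]$ as a domestic Geigle--Lenzing weighted projective line, obtain a derived equivalence $\Db(\mathbb{X}) \cong \Db(kQ)$ for a finite acyclic quiver $Q$, and conclude by Theorem~\ref{thm:JD wild}. The only minor variation is that you pass through the canonical algebra and invoke the classical fact that domestic canonical algebras are derived equivalent to tame hereditary algebras, whereas the paper cites \cite[Proposition 2.4]{gl} for a tilting object whose endomorphism algebra is already hereditary, reaching the same equivalence in one step.
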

\begin{proof}
$\Db(\mathbb{X})$ has a full strong exceptional sequence with a hereditary endomorphism algebra, cf.\ \cite[Proposition 2.4]{gl}, which is formulated in the language of `Geigle--Lenzing weighted projective lines' (see e.g.\ \cite[Section 1.4]{pol} for a translation between our setting and \cite{gl}).
\end{proof}

\begin{rem}
Actually, Corollary \ref{C:QuotStacks}
holds more generally for all `\emph{domestic} Geigle--Lenzing weighted projective lines', cf.\ \cite[Proposition 2.4]{gl}.
\end{rem}

The {\it root category} of a hereditary algebra $\Lambda$ is defined to be the orbit category 
\[
\scrR_\Lambda\defeq \Db(\Lambda)/[2],
\]
which is introduced by Happel \cite{happel}. Happel showed that for a Dynkin quiver $Q$ of type ADE, the isomorphism classes of indecomposable objects in $\scrR_{kQ}$ correspond bijectively to the  root system of the simple Lie algebra of the same ADE type. The root category $\scrR_\Lambda$ has a natural triangulated structure, since so does $\Db(\Lambda)/[m]$ for any $m\in\bZ$ by \cite{kel}. Combining \cite{koe} and Theorem \ref{thm:JD wild}, shows that $\scrR_{\Lambda}$ satisfies the JD property.

\begin{cor}\label{cor:orbit}
Let $Q$ be a finite acyclic quiver, and let $m\in\bZ_{>0}$ be a positive integer. Then the natural projection functor $\pi\colon \Db(kQ)\to \Db(kQ)/[m]$ induces an isomorphism 
\[
\Th(\Db(kQ))\simto \Th(\Db(kQ)/[m]) \hspace{7mm} \scrU\mapsto \pi(\scrU).
\]
In particular, $\Db(kQ)/[m]$ satisfies the JD property.
\begin{proof}
The first assertion is a special case of \cite[Theorem 4.4]{koe}. The second one  follows immediately from the first and Theorem \ref{thm:JD wild}.
\end{proof}
\end{cor}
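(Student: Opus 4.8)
The first assertion is exactly \cite[Theorem 4.4]{koe}: for a hereditary algebra, the projection onto the orbit category $\Db(kQ)/[m]$ induces an isomorphism of the lattices of thick subcategories via the rule $\scrU\mapsto \pi(\scrU)$. So the plan is to take this lattice isomorphism as a black box and transport along it the Jordan--Dedekind property of $\Db(kQ)$ established in Theorem \ref{thm:JD wild}; the only thing that needs checking is that this isomorphism interacts correctly with composition series.

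To that end, I would first record that membership of a chain $0=\scrS_0\subsetneq\cdots\subsetneq \scrS_n=\scrT$ in $\CS(\scrT)$ is a purely order-theoretic condition on $\Th(\scrT)$: by Proposition \ref{verdier} the quotient $\scrS_i/\scrS_{i-1}$ is simple if and only if there is no thick subcategory strictly between $\scrS_{i-1}$ and $\scrS_i$, i.e. $\scrS_i$ covers $\scrS_{i-1}$ in the lattice $\Th(\scrT)$. Hence any lattice isomorphism $\varphi\colon \Th(\scrT)\simto \Th(\scrT')$ sends $\CS(\scrT)$ bijectively onto $\CS(\scrT')$ and preserves length, since it preserves the least and greatest elements and the covering relation. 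Applying this with $\scrT=\Db(kQ)$, $\scrT'=\Db(kQ)/[m]$ and $\varphi=\pi(-)$ yields a length-preserving bijection $\CS(\Db(kQ))\simto \CS(\Db(kQ)/[m])$. Since by Theorem \ref{thm:JD wild} every member of $\CS(\Db(kQ))$ has length $n$, the same holds on the right; in particular $\ell(\Db(kQ)/[m])=n<\infty$ and $\Db(kQ)/[m]$ satisfies the Jordan--Dedekind property.

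There is essentially no obstacle beyond the cited input: the substantive work is done by \cite[Theorem 4.4]{koe}, whose proof analyses the effect of the orbit construction on thick subcategories (and in particular uses that $\Db(kQ)/[m]$ is triangulated in the sense of \cite{kel}); granting it, the argument above is formal. The one point worth spelling out explicitly is the lattice-theoretic characterisation of the simplicity of the subquotients $\scrS_i/\scrS_{i-1}$, which is what guarantees that composition series and their lengths are preserved under any isomorphism of the poset $\Th(-)$ — this is precisely the mechanism already exploited in Section \ref{section:length}.
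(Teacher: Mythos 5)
Your proposal is correct and follows the paper's argument exactly: cite \cite[Theorem 4.4]{koe} for the lattice isomorphism and then transport the Jordan--Dedekind property from Theorem \ref{thm:JD wild}. The extra paragraph spelling out that a lattice isomorphism preserves composition series via the covering relation (using Proposition \ref{verdier}) is the content the paper compresses into ``follows immediately,'' so the two proofs coincide.
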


At the end of this section, we ask the following question.

\begin{ques}
Let $\scrA$ be a hereditary abelian category over $k$ such that  $\Db(\scrA)$ is of finite length. Does $\Db(\scrA)$ satisfy the JD property?
\end{ques}

\vspace{3mm}
\subsection{Singular varieties and singularity categories}~

We discuss the length of derived categories of some singular varieties. First, we consider perfect complexes over artinian rings. 

\begin{prop}\label{prop:artin length}
Let $R$ be a commutative artinian ring, and 
denote by $n$  the number of maximal ideals in $R$. Then $\ell(\Perf R)=n$, and $\Perf R$ satisfies the JD property.
\begin{proof}
This follows from Theorem \ref{class ring} and Lemma \ref{lem:matsui}.
\end{proof}
\end{prop}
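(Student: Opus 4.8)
The plan is to reduce the statement directly to the two already-cited inputs: Neeman's classification of thick subcategories of $\Perf R$ over a noetherian ring (Theorem \ref{class ring}) and the combinatorial lemma on composition series of the poset of specialization-closed subsets of a $T_0$-space (Lemma \ref{lem:matsui}). Since $R$ is commutative artinian, $\Spec R$ is a finite discrete set with exactly $n$ points, one for each maximal ideal. In particular $\Spec R$ is a finite $T_0$-space, so Lemma \ref{lem:matsui} applies and gives that $\Spcl(\Spec R)$ admits a composition series, that $\ell(\Spcl(\Spec R)) = \#\Spec R = n$, and that $\Spcl(\Spec R)$ satisfies the Jordan--Dedekind property.

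Next I would transport this statement across the lattice isomorphism. By Theorem \ref{class ring}, the assignment $W \mapsto \Perf_W R$ is an order-preserving bijection from $\Spcl(\Spec R)$ to $\Th(\Perf R)$ (note that over an artinian ring every subset of $\Spec R$ is closed, hence specialization-closed, so the domain really is all subsets). An order isomorphism of posets carries maximal chains to maximal chains and, because simplicity of a Verdier quotient $\scrS_i/\scrS_{i-1}$ is equivalent to there being no thick subcategory strictly between $\scrS_{i-1}$ and $\scrS_i$ (a purely order-theoretic condition, as recorded right after the definition of composition series), it identifies $\CS(\Perf R)$ with the set of composition series of $\Spcl(\Spec R)$, preserving lengths. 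Hence $\ell(\Perf R) = \ell(\Spcl(\Spec R)) = n$ and the Jordan--Dedekind property transfers verbatim.

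There is essentially no obstacle here; the only point requiring a line of care is the observation that ``no thick subcategory strictly in between'' is an intrinsically lattice-theoretic notion, so that the lattice isomorphism of Theorem \ref{class ring} really does preserve the property of being a composition series and not merely the property of being a maximal chain — but these two coincide by the equivalent reformulation of simplicity given in the definition. One could alternatively phrase the whole argument by simply invoking the companion Corollary \ref{cor:commutative ring} together with the finiteness of $\Spec R$, but spelling out the length count $n$ via Lemma \ref{lem:matsui} makes the stated value of $\ell(\Perf R)$ explicit.
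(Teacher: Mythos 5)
Your proof is correct and uses exactly the same two ingredients as the paper — Neeman's classification (Theorem \ref{class ring}) and the lemma on specialization-closed subsets of a $T_0$-space (Lemma \ref{lem:matsui}) — merely spelling out the transport across the lattice isomorphism that the paper leaves implicit.
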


Let $X$ be a regular noetherian separated scheme,  $L$ an ample line bundle, and $s\in \Gamma(X,L)$ a non-zero-divisor. Denote by $Z$ the zero scheme of $s$, and  consider the singularity category
\[
\Dsg(Z)\defeq \Db(Z)/\Perf Z.
\]
For an object $F\in \Dsg(Z)$, we define its support by
\[
\Supp(F)\defeq\{p\in Z\mid F_{p}\not\cong 0 \mbox{ in } \Dsg(\cO_{Z,p})\}.
\]
It is known that $\Supp(F)$ is a closed subset of the singular locus $\Sing(Z)$. 
For a specialization closed subset $W\subseteq \Sing(Z)$, denote by 
$\D^{\rm sg}_W(Z)\subseteq \Dsg(Z)$ the thick subcategory consisting of objects $F$ with $\Supp(F)\subseteq W$. The following is a global version of Takahashi's result \cite{takahashi1}.

\begin{thm}[{\cite{hirano},\cite{stevenson}}]\label{thm:sing lattice}
The map 
\[
f\colon \{W\subseteq \Sing(Z)\mid \mbox{$W$ is specialization-closed}\}\to \Th(\Dsg(Z))
\]
given by $f(W)\defeq \D^{\rm sg}_W(Z)$ is an order-preserving bijection.
\end{thm}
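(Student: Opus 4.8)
The plan is to deduce the global statement from Takahashi's classification of thick subcategories of the stable category of maximal Cohen--Macaulay modules over a local hypersurface \cite{takahashi1}, by a reduction-to-local argument that exploits the localisation behaviour of singularity categories. First note that $f$ is well defined and order-preserving for formal reasons: singular support is stable under shifts, cones and direct summands, so $\D^{\rm sg}_W(Z)$ is thick, and $W\subseteq W'$ forces $\D^{\rm sg}_W(Z)\subseteq \D^{\rm sg}_{W'}(Z)$. So the content is bijectivity.

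For injectivity it suffices to show $\bigcup_{F\in\D^{\rm sg}_W(Z)}\Supp(F)=W$, the inclusion from left to right being definitional. Given $p\in W$, I would use the object $\cO_{\overline{\{p\}}}\in\Db(Z)$, the structure sheaf of the reduced closure of $p$. Its singular support is a closed subset of $\overline{\{p\}}\cap\Sing(Z)$, which is contained in $W$ because $W$ is specialization-closed in $\Sing(Z)$ and contains $p$; and $p$ itself lies in this support, since $\cO_{\overline{\{p\}},p}$ is the residue field of the singular local ring $\cO_{Z,p}$, hence of infinite projective dimension over it. Thus every point of $W$ occurs in the support of some object of $\D^{\rm sg}_W(Z)$, so distinct $W$'s give distinct subcategories.

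For surjectivity, let $\scrU\in\Th(\Dsg(Z))$ and put $W\defeq\bigcup_{F\in\scrU}\Supp(F)$, which is specialization-closed in $\Sing(Z)$; only $\D^{\rm sg}_W(Z)\subseteq\scrU$ needs proof, i.e.\ any $F$ with $\Supp(F)\subseteq W$ must lie in $\scrU$. Since $L$ is ample, $Z$ has an affine open cover on which it is cut out of a regular ring by a single non-zerodivisor, so $Z$ is locally a hypersurface. Using the localisation sequences $\Dsg(Z)/\D^{\rm sg}_{Z\setminus V}(Z)\hookto\Dsg(V)$ for open $V\subseteq Z$ and their local counterparts $\Dsg(Z)\to\Dsg(\cO_{Z,p})$, one runs a Noetherian induction on $\Supp(F)$ that reduces testing $F\in\scrU$ to the corresponding statements over the local hypersurfaces $\cO_{Z,p}$ with $p\in\Supp(F)$; this is the local-to-global argument carried out in \cite{stevenson}, respectively its matrix-factorisation incarnation in \cite{hirano}. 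The remaining local input is Takahashi's: over a local hypersurface, maximal Cohen--Macaulay modules $M,N$ with $\Supp(\underline N)\subseteq\Supp(\underline M)$ satisfy $\underline N\in\lbr\underline M\rbr$, proved by placing, for each minimal prime $\q$ of $\Supp(\underline N)$, a Koszul object $\underline M\otimes^{\mathbf L}K(\underline y)$ on generators of $\q$ inside $\lbr\underline M\rbr$, and using the $2$-periodicity of free resolutions over hypersurfaces to recover $\underline N$ from an object with the same support.

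The step I expect to be the main obstacle is this globalisation: making precise that a thick subcategory of $\Dsg(Z)$ is reconstructed from its localisations $\Dsg(\cO_{Z,p})$, and controlling the gap between thick and localising subcategories in the descent. This is exactly where the hypothesis that $Z$ is a divisor in a regular scheme is used essentially -- for more singular $Z$ the map $f$ fails to be surjective -- just as the local case rests essentially on $2$-periodicity. Concretely I would follow \cite{stevenson} for the descent bookkeeping and \cite{hirano} for the factorisation-category version, checking that their hypotheses are met for $Z$ the zero scheme of a section of an ample line bundle on a regular separated scheme.
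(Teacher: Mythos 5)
The paper states this theorem as a citation of \cite{hirano} and \cite{stevenson} and gives no proof, so there is no in-text argument to compare against. Your outline has the right overall shape: the injectivity argument via the witness object $\cO_{\overline{\{p\}}}$ is sound (its stalk at $p$ is the residue field $\kappa(p)$, which has infinite projective dimension over the singular local ring $\cO_{Z,p}$, and its singular support lies in $\overline{\{p\}}\cap\Sing(Z)\subseteq W$), and surjectivity does indeed reduce to Takahashi's classification over local hypersurfaces.

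However, I think you have misidentified the mechanism that makes the global-to-local reduction work. You locate the difficulty in ``controlling the gap between thick and localising subcategories in the descent,'' but the obstruction is not of that nature. The actual key lemma in both \cite{hirano} and \cite{stevenson} -- and precisely where ampleness of $L$ enters -- is that \emph{every} thick subcategory of $\Dsg(Z)$ is automatically closed under the tensor action of the ambient regular scheme, i.e.\ is a tensor submodule. This follows from two observations: in the matrix-factorisation model one has a natural isomorphism $(-)\otimes L\cong [2]$, so any thick subcategory, being closed under shifts, is closed under $(-)\otimes L^{\otimes n}$ for all $n\in\bZ$; and ampleness of $L$ forces the twists $L^{\otimes n}$ to split-generate the perfect complexes on the ambient scheme, which upgrades closure under these twists to closure under the full tensor action. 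Only once this ``thick implies tensor-ideal'' step is in place does the Hopkins--Neeman--Thomason/Balmer--Favi support-theoretic bookkeeping apply to classify the subcategories by specialization-closed subsets. Without naming this lemma, the appeal to Takahashi's local result is not actually secured; the paper's own remark immediately following the theorem flags exactly this, noting that for non-ample $L$ one must restrict $\Th(\Dsg(Z))$ to tensor-closed subcategories.
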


\begin{rem}
If $L$ is not ample, we need to restrict $\Th(\Dsg(Z))$ to the set of thick subcategories closed under certain tensor action (see \cite{hirano,stevenson}).
\end{rem}

The combination of Theorem \ref{thm:sing lattice} and Lemma \ref{lem:matsui} implies the following.

\begin{cor}\label{prop:sing length}
We keep the notation from above. If $\ell(\Dsg(Z))$ or $\#\Sing(Z)$ is finite, then 
$\ell(\Dsg(Z))=\#\Sing(Z)$  and
$\Dsg(Z)$ has the JD property. 
\end{cor}

\begin{cor}\label{prop:artin derived}
Let $R$ be an artinian hypersurface singularity.  
Denote by $n$  the number of maximal ideals of $R$, and denote by $m$ the number of singular points. Then there is a composition series $\scrS_*\in \CS(\Db(R))$ with $\ell(\scrS_*)=n+m$. In particular, $\ell(\Db(R))\leq n+m$.
\begin{proof}
By Proposition \ref{prop:artin length}, $\ell(\Perf R)=n$ holds, and  by Corollary \ref{prop:sing length} we have $\ell(\Db(R)/\Perf R)=m$. Hence $\ell(\Db(R))\leq n+m$ by Proposition \ref{prop:length verdier}.
\end{proof}
\end{cor}

\begin{exa}\label{exa:singular} Using the above results, we discuss the length of derived categories of singular varieties.
\begin{itemize}
\item[(1)] Let $R\defeq k[x]/\l x^n\r$ for $n>1$. Then $R$ is artinian, and $\Spec R=\Sing R$ has a unique point. Thus $\ell(\Db(R))\leq2$ by Proposition \ref{prop:artin derived}. Since $\Db(R)$ is not simple, 
$\ell(\Db(R))=2$.

\item[(2)]Let $X$ be the nodal quadric in $\bP^3$ -- note that this projective surface can also be described as the weighted projective plane $\bP(1,1,2)$. 

Let $R\defeq k[x]/\l x^2\r$. There is a semi-orthogonal decomposition 
\[
\Db(X)=\l \Db(R), \Db(k),\Db(k)\r
\]
by \cite{kuz}, \cite[Example 5.7]{kaw} or \cite[Example 5.12]{kks} , which induces a semi-orthogonal decomposition 
\[
\Perf X=\l \Perf R, \Db(k), \Db(k)\r.
\] 
These decompositions induce several different composition series of length $4$ in $\Db(X)$ -- in particular, $\ell(\Db(X))\leq 4$.
\begin{align*}
&\Bigl(0\subsetneq\Perf R\subsetneq \Db(R) \subsetneq \l\Db(R),\Db(k)\r\subsetneq \Db(X)\Bigr)\\
&\Bigl(0\subsetneq \Perf R\subsetneq \l\Perf R,\Db(k)\r\subsetneq \Perf X \subsetneq\Db(X)\Bigr)\\
&\Bigl(0\subsetneq \Db(k)\subsetneq \l\Db(k),\Db(k)\r\subsetneq \Perf X \subsetneq\Db(X)\Bigr)
\end{align*}
In the last two cases, we use that $\Dsg(X)$ is simple (by Corollary \ref{prop:sing length}), since
$X$ has a unique isolated hypersurface singularity.

\item[(3)] Let $X\defeq \bP(1,2,3)$. By \cite[Example 5.8]{kaw} and \cite[Example 5.13]{kks}, there is a semi-orthogonal decomposition
\[
\Db(X)=\l \Db(k),\Db(k[x]/\l x^2\r), \Db(k[x]/\l x^3\r)\r,
\]
which induces a similar semi-orthogonal decomposition of $\Perf X$.
Hence $\ell(\Db(X))\leq 5$ and $\ell(\Perf X)\leq 3$.
\end{itemize}
\end{exa}

\begin{rem}\label{rem:rank 2}
By Corollary \ref{prop:sing length}, the singularity categories of local hypersurface rings with  isolated singularities are simple. Let $n$ be a positive integer. By \cite[Lemma 2.22]{kps}, there exists a local ring $R$ with an isolated singularity, such that  $\rk(K_0(\Dsg(R)))=n$. For example, if $R=k\lbr x,y,z,w\rbr/\l x^2+y^2+z^2w+w^{2r-1}\r$, then $\Dsg(R)$ is simple and $K_0(\Dsg(R))=\bZ^{\oplus 2}$.
\end{rem}

Let $(R,\m)$ be a commutative noetherian local ring. We write $\Spec_0R\defeq \Spec R\backslash\{\m\}$, and set
\[
\D_{\m}^{\rm sg}(R)\defeq \{M\in \Dsg(R)\mid M_{\p}\cong 0 \in \Dsg(R_{\p})\mbox{ for all $\p\in \Spec _0R$}\}.
\]
It is easy to see that $\D_{\m}^{\rm sg}(R)$ is a thick subcategory, and the following holds.
\begin{lem}\label{lem:dsgm} We have the following.
\begin{itemize} 
\item[$(1)$] The equality $\D_{\m}^{\rm sg}(R)=\lbr R/\m\rbr$ holds.
\item[$(2)$]$\D_{\m}^{\rm sg}(R)$ is non-zero if and only if $\Dsg(R)$ is non-zero.
\end{itemize}
\begin{proof}
(1) This is \cite[Corollary 4.3\,(3)]{takahashi3}.\\
(2) $(\Rightarrow)$ is obvious. If  $\Dsg(R)$ is non-zero, $\Sing(R)\neq\emptyset$. Since $R$ is singular, we have ${\rm proj.dim} \,R/\m={\rm gl.dim} \,R=\infty$. This shows that $R/\m$ is non-zero in $\Dsg(R)$. By (1), $\D_{\m}^{\rm sg}(R)$ is non-zero.
\end{proof}
\end{lem}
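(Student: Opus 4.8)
The plan is to prove (1) and then read off (2) as an easy consequence. For (1) I would establish the two inclusions separately. The inclusion $\lbr R/\m\rbr\subseteq\D_{\m}^{\rm sg}(R)$ is formal: for each $\p\in\Spec_0 R$ the localization functor $(-)_{\p}\colon\Dsg(R)\to\Dsg(R_{\p})$ is exact, so $\{M\in\Dsg(R)\mid M_{\p}\cong 0\}$, being the preimage of the zero subcategory under an exact functor, is a thick subcategory of $\Dsg(R)$; consequently their intersection $\D_{\m}^{\rm sg}(R)$ is a thick subcategory. It contains the object $R/\m$, since for $\p\neq\m$ we have $(R/\m)_{\p}=0$ already as an $R_{\p}$-module (any $s\in\m\backslash\p$ kills $R/\m$ but is invertible in $R_{\p}$). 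Hence $\lbr R/\m\rbr\subseteq\D_{\m}^{\rm sg}(R)$.

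For the opposite inclusion $\D_{\m}^{\rm sg}(R)\subseteq\lbr R/\m\rbr$ I would invoke \cite[Corollary 4.3\,(3)]{takahashi3}. If a direct argument were wanted, the idea is a d\'evissage: given $M\in\D_{\m}^{\rm sg}(R)$, reduce via the truncation filtration of a representing complex to the case that $M$ is a single finitely generated module which is free on the punctured spectrum; pick a system of parameters $\underline{x}=x_1,\dots,x_d$ of $R$; then the Koszul twist $K(\underline{x})\otimes^{\mathbf L}_R M$ has finite-length cohomology, hence lies in $\lbr R/\m\rbr$ because a finite-length module is an iterated extension of copies of $R/\m$; the remaining step, recovering $M$ itself from its Koszul twist inside the thick closure of $R/\m$, is exactly the nontrivial content of the cited result, so I would quote it rather than reprove it.

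Granting (1), part (2) follows quickly. The implication $(\Rightarrow)$ is immediate from $\D_{\m}^{\rm sg}(R)\subseteq\Dsg(R)$. For $(\Leftarrow)$: if $\Dsg(R)\neq 0$ then $R$ is not regular, and since $R$ is local the Auslander--Buchsbaum--Serre theorem gives $\projdim_R(R/\m)=\gldim R=\infty$; thus $R/\m\notin\Perf R$, i.e. $R/\m\not\cong 0$ in $\Dsg(R)$, so $\lbr R/\m\rbr\neq 0$, and by (1) we conclude $\D_{\m}^{\rm sg}(R)=\lbr R/\m\rbr\neq 0$. The single genuine obstacle in the whole argument is the inclusion $\D_{\m}^{\rm sg}(R)\subseteq\lbr R/\m\rbr$ in (1) — the assertion that every object of the singularity category with isolated singular support is generated by the residue field — which is a real d\'evissage theorem and is most cleanly handled by citing \cite{takahashi3}.
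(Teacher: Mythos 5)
Your proof is correct and takes essentially the same approach as the paper: both parts (1) rest on the same citation to Takahashi's Corollary 4.3(3), and part (2) uses the same argument that $R$ singular forces $\projdim_R(R/\m)=\gldim R=\infty$ (by Auslander--Buchsbaum--Serre) so that $R/\m$ is nonzero in $\Dsg(R)$ and hence $\D_{\m}^{\rm sg}(R)=\lbr R/\m\rbr\neq 0$. The additional details you supply (the explicit verification of the easy inclusion $\lbr R/\m\rbr\subseteq\D_{\m}^{\rm sg}(R)$, and the sketched d\'evissage for the hard direction) are correct but not needed beyond the citation.
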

The following is well known, but we give a proof for the reader's convenience. 

\begin{lem}\label{lem:isolated}
Let $(R,\m)$ be a commutative noetherian local ring. Then $R$ is an isolated singularity if and only if $\Dsg(R)=\D_{\m}^{\rm sg}(R)$. 
\begin{proof}
($\Rightarrow$): By Lemma \ref{lem:dsgm}\,(1), $\D_{\m}^{\rm sg}(R)=\lbr R/\m\rbr$ holds. Thus if $R$ is an isolated singularity, $\Dsg(R)=\D_{\m}^{\rm sg}(R)$ holds by \cite[Corollary 4.3\,(2)]{takahashi3}.\\
($\Leftarrow$): If $\Sing R\neq \{\m\}$, there exists $\p\in \Spec_0 R$ such that $R_{\p}$ is singular. Then $R/\p\in \Dsg(R)$ does not lie in $\D_{\m}^{\rm sg}(R)$, since $(R/\p)_{\p}\not\cong 0$ in $\Dsg(R_{\p})$. This contradicts to the assumption $\Dsg(R)=\D_{\m}^{\rm sg}(R)$.
\end{proof}
\end{lem}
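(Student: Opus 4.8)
The plan is to deduce both directions directly from the defining condition
\[
\D_{\m}^{\rm sg}(R)=\{M\in\Dsg(R)\mid M_{\p}\cong 0\text{ in }\Dsg(R_{\p})\text{ for all }\p\in\Spec_0R\},
\]
together with two standard facts: the localization functor $\Db(R)\to\Db(R_{\p})$ preserves perfect complexes and hence induces an exact functor $\Dsg(R)\to\Dsg(R_{\p})$, $M\mapsto M_{\p}$; and a noetherian local ring is regular if and only if its singularity category vanishes (equivalently, by Auslander--Buchsbaum--Serre, if and only if its residue field has finite projective dimension).

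For the implication $(\Rightarrow)$, suppose $R$ is an isolated singularity. Then $R_{\p}$ is regular for every $\p\in\Spec_0R$, so $\Dsg(R_{\p})=0$, and therefore $M_{\p}\cong 0$ in $\Dsg(R_{\p})$ for \emph{every} object $M\in\Dsg(R)$ and every such $\p$. By the displayed condition this means $\Dsg(R)\subseteq\D_{\m}^{\rm sg}(R)$, and since the reverse inclusion holds by definition we obtain $\Dsg(R)=\D_{\m}^{\rm sg}(R)$. Alternatively one can invoke Lemma \ref{lem:dsgm}\,(1), which identifies $\D_{\m}^{\rm sg}(R)=\lbr R/\m\rbr$, together with the fact that $\lbr R/\m\rbr=\Dsg(R)$ for an isolated singularity, e.g.\ \cite[Corollary 4.3\,(2)]{takahashi3}.

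For the implication $(\Leftarrow)$ I would argue contrapositively. Assume $R$ is not an isolated singularity; then $\Sing R\neq\{\m\}$, so there is $\p\in\Spec_0R$ for which $R_{\p}$ is singular. Take $R/\p$, viewed as an object of $\Dsg(R)$. Its localization at $\p$ is $(R/\p)_{\p}=R_{\p}/\p R_{\p}$, the residue field of the local ring $R_{\p}$, which has infinite projective dimension over $R_{\p}$ precisely because $R_{\p}$ is not regular; hence $(R/\p)_{\p}$ is a nonzero object of $\Dsg(R_{\p})$. Consequently $R/\p$ is itself nonzero in $\Dsg(R)$ (localization sends the zero object to the zero object) and fails the defining condition for membership in $\D_{\m}^{\rm sg}(R)$. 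Thus $\Dsg(R)\neq\D_{\m}^{\rm sg}(R)$, which proves the contrapositive.

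The statement is classical, so I do not anticipate a genuine obstacle. The only points requiring a little care are exhibiting an object that is genuinely nonzero in $\Dsg(R)$ while not lying in $\D_{\m}^{\rm sg}(R)$ -- this comes for free, since $(R/\p)_{\p}$ is already nonzero in $\Dsg(R_{\p})$ -- and being careful to cite rather than reprove both the compatibility of $\Dsg$ with localization and the vanishing of $\Dsg$ for regular local rings.
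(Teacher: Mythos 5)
Your proof is correct, and the $(\Leftarrow)$ direction is essentially identical to the paper's: contrapositive, take $\p\in\Spec_0R$ with $R_\p$ singular and observe $(R/\p)_\p$ is the residue field of $R_\p$, hence nonzero in $\Dsg(R_\p)$, so $R/\p\notin\D_\m^{\rm sg}(R)$.

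For $(\Rightarrow)$ your primary argument is actually more direct than the paper's. The paper first invokes Lemma \ref{lem:dsgm}\,(1) to identify $\D_\m^{\rm sg}(R)=\lbr R/\m\rbr$, and then cites \cite[Corollary 4.3\,(2)]{takahashi3} (which says $\lbr R/\m\rbr=\Dsg(R)$ for isolated singularities). You instead read the conclusion off the definition: if $R$ is an isolated singularity then $R_\p$ is regular for every $\p\in\Spec_0R$, so $\Dsg(R_\p)=0$ and the defining condition on $M_\p$ holds vacuously for every $M\in\Dsg(R)$. This is cleaner, avoids the external citation, and makes the one-line character of the forward implication transparent; you correctly note the paper's route as an alternative. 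Both are valid; yours is self-contained given only the regular-iff-$\Dsg=0$ fact and the compatibility of $\Dsg$ with localization, both of which you flag appropriately.
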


Following \cite{takahashi4}, we say that $R$ is {\it dominant} if, for each nonzero object $M\in \Dsg(R)$, the residue field $R/\m$ lies in the thick subcategory $\lbr M\rbr\subseteq \Dsg(R)$. The following is pointed out by Souvik Dey.

\begin{prop}
Let $R$ be a commutative noetherian local ring. Then the following are equivalent.
\begin{itemize}
\item[$(1)$] $\Dsg(R)$ is simple.
\item[$(2)$] $R$ is dominant isolated singularity.
\end{itemize}
\begin{proof}
(1)$\Rightarrow$(2): By assumption (1), $\Dsg(R)$ is simple. In particular, $\Dsg(R)$ is non-zero, $\D_{\m}^{\rm sg}(R)$ is a non-zero thick subcategory by Lemma \ref{lem:dsgm}\,(2). Since $\Dsg(R)$ is simple, $\Dsg(R)=\D_{\m}^{\rm sg}(R)$ holds. Therefore, $R$ is an isolated singularity by Lemma \ref{lem:isolated}.\\
(2)$\Rightarrow$(1): Let $M\in\Dsg(R)$ be a non-zero object. Since $R$ is dominant, $\D_{\m}^{\rm sg}(R)=\lbr R/\m\rbr\subseteq \lbr M\rbr\subseteq \Dsg(R)$. Since $R$ is an isolated singularity, this implies that $\lbr M\rbr=\Dsg(R)$ by Lemma \ref{lem:isolated}. This finishes the proof.
\end{proof}
\end{prop}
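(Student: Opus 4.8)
The plan is to deduce the equivalence formally from the two preceding lemmas: Lemma~\ref{lem:dsgm}, which identifies the thick subcategory $\D_{\m}^{\rm sg}(R)$ with $\lbr R/\m\rbr$ and records that it is nonzero precisely when $\Dsg(R)$ is, and Lemma~\ref{lem:isolated}, which says that $R$ is an isolated singularity if and only if $\Dsg(R)=\D_{\m}^{\rm sg}(R)$. The key reformulation is that simplicity of $\Dsg(R)$ amounts to: $\Dsg(R)\neq0$ and every nonzero object of $\Dsg(R)$ is a split generator. In this picture, ``dominant'' supplies generation of the residue field $R/\m$, while ``isolated singularity'' upgrades $\lbr R/\m\rbr$ to all of $\Dsg(R)$.

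For $(1)\Rightarrow(2)$ I would argue as follows. Assume $\Dsg(R)$ is simple, so in particular $\Dsg(R)\neq0$, whence $\D_{\m}^{\rm sg}(R)\neq0$ by Lemma~\ref{lem:dsgm}(2). Being a nonzero thick subcategory of the simple category $\Dsg(R)$, it must coincide with $\Dsg(R)$, and by Lemma~\ref{lem:isolated} this means $R$ is an isolated singularity. For dominance, take any nonzero $M\in\Dsg(R)$: then $\lbr M\rbr$ is a nonzero thick subcategory, so $\lbr M\rbr=\Dsg(R)=\D_{\m}^{\rm sg}(R)=\lbr R/\m\rbr$ by Lemma~\ref{lem:dsgm}(1), and in particular $R/\m\in\lbr M\rbr$.

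For $(2)\Rightarrow(1)$ I would use that a dominant isolated singularity is in particular genuinely singular, so $\Dsg(R)\neq0$ and $\D_{\m}^{\rm sg}(R)=\Dsg(R)$ by Lemma~\ref{lem:isolated}. Given a nonzero $M\in\Dsg(R)$, dominance yields $R/\m\in\lbr M\rbr$, hence $\D_{\m}^{\rm sg}(R)=\lbr R/\m\rbr\subseteq\lbr M\rbr\subseteq\Dsg(R)$ by Lemma~\ref{lem:dsgm}(1); since the outer terms agree, $\lbr M\rbr=\Dsg(R)$. Thus every nonzero object generates $\Dsg(R)$, so $\Dsg(R)$ contains no nontrivial thick subcategory, i.e. it is simple.

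Once the two lemmas are available the argument is essentially formal, so I do not expect a serious obstacle here; the only point requiring care is the convention that ``isolated singularity'' is understood to include the requirement that $R$ be singular (equivalently $\Dsg(R)\neq0$), since otherwise a regular local ring would be a dominant isolated singularity with $\Dsg(R)=0$, which is not simple. The real content of the statement is packaged into Lemmas~\ref{lem:dsgm} and~\ref{lem:isolated}, namely Takahashi's description of $\lbr R/\m\rbr$ and the support-theoretic classification of thick subcategories of singularity categories.
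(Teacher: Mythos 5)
Your argument matches the paper's almost verbatim: both directions are deduced formally from Lemmas~\ref{lem:dsgm} and~\ref{lem:isolated} exactly as you describe. In fact you are slightly more thorough than the paper, whose proof of $(1)\Rightarrow(2)$ verifies only the isolated-singularity half of condition $(2)$ and leaves dominance (which, as you observe, follows immediately since every nonzero object split-generates a simple category) to the reader; your explicit handling of that point, and of the convention that an ``isolated singularity'' is genuinely singular so that $\Dsg(R)\neq 0$ in direction $(2)\Rightarrow(1)$, are both worthwhile clarifications.
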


\noindent
The following is pointed out by Hiroki Matsui and Souvik Dey independently.
\begin{rem}
Let $R$ be a regular local ring, and let $R/I$ be a complete intersection of codimension $c$. By \cite[Corollary 10.5]{stevenson} and Lemma \ref{lem:matsui}, if $\ell(\Dsg(R/I))<\infty$, $R/I$ is a hypersurface singularity, i.e., $c=1$.
\end{rem}

At the end of this section, we discuss the JD property for certain $\bZ$-graded singularity categories.

\begin{cor} 
\begin{itemize}
\item[$(1)$]
Let $f\in S\defeq \bC[x_1,x_2,x_3]$ be a polynomial of type ADE. Define a $\bZ$-grading on $S$ by $d_i\defeq \deg(x_i)\in\bZ_{>0}$  such that $f\in S$ is homogeneous and $(d_1,d_2,d_3)$ is coprime. Then the graded singularity category $\D^{\rm sg}_{\bZ}(S/\l f\r)$ satisfies the JD property.

\item[$(2)$] Let $S\defeq k[x_1,x_2,x_3]$ with all $x_i$ in degree $1$ and let $C_3 = \langle \sigma  \rangle$ be the cyclic group of order three. Consider the action of $C_3$ on $S$ via $\sigma x_i = \omega x_i$ where $\omega^3 = 1$, $\omega \neq 1$. Let 
$R = S^{C_3}$ be the invariant ring. Then the graded singularity category $\D^{\rm sg}_{\bZ}(R)$
satisfies the JD property.
\end{itemize}
\begin{proof}
$(1)$ By \cite{orlov3}, $\D^{\rm sg}_{\bZ}(S/\l f\r)$ is equivalent to the homotopy category of graded matrix factorizations of $f$, and it is equivalent to the derived category $\Db(kQ)$ for a Dynkin quiver $Q$ of the same ADE type as $f$ by \cite{kst}. By  Theorem \ref{thm:JD wild}, it satisfies the JD property.

\noindent
$(2)$ Using e.g.\ \cite{KMV} there is an equivalence $\D^{\rm sg}_{\bZ}(R) \cong \Db(kK_3)$ where $K_3$ denotes the $3$-Kronecker quiver. Now the result follows from Theorem \ref{thm:JD wild}.
\end{proof}
\end{cor}

\vspace{3mm}
\section{Derived categories without the JD property}\label{section:JD property}

\noindent
Throughout this section, let $k$ be an algebraically closed field of characteristic zero. We give examples of derived categories of smooth varieties and finite-dimensional algebras of finite global dimension without the JD property. 

\vspace{2mm}
\subsection{Toric surfaces}\label{section:rational surfaces}
We show that derived categories of certain smooth projective toric surfaces, including Hirzebruch surfaces $\bF_d$ with $d>1$, don't satisfy the JD property. 
To this end,  we first consider the {\it graded $m$-Kronecker quiver} $\Kr^m_q$ of degree $q$ for $m>1$ and $q\in\bZ$, which is defined as the graded quiver
\[
\begin{tikzcd}
    1
    \arrow[r, draw=none, "\raisebox{+1.5ex}{\vdots}" description]
    \arrow[r, bend left,        "\alpha_0"]
    \arrow[r, bend right, swap, "\alpha_{m-1}"]
    &
    2
\end{tikzcd}
\]
where $\deg(\alpha_0)=0$ and $\deg(\alpha_i)=q$ for $i>0$. Then the graded path algebra $k\Kr_q^m$ is a graded $R$-algebra, where $R\defeq ke_1\times ke_2\subset k\Kr_q^m$ is the subalgebra generated by the idempotent elements.  

\begin{lem}\label{lem:kronecker formal}
The graded $R$-algebra $k\Kr^m_q$ is intrinsically formal.
\begin{proof}
Since the quiver of $k\Kr^m_q$ has only $2$ vertices, this follows as in the proof of \cite[Proposition A.3]{kuz3}.
\end{proof}
\end{lem}

Considering  $k\Kr^m_q$ as a dg-$R$-algebra with trivial differential, we define
\[
\scrKr^m_q\defeq \Perf(k\Kr^m_q),
\]
and call it the {\it graded $m$-Kronecker quiver category} of degree $q$ following \cite{ks}. If the underlying quiver is not wild, that is, if $m=2$, we simply write $\scrKr_q$.

\begin{lem}\label{lem:kronecker equiv}
Let $\scrT$ be an idempotent complete triangulated category with a dg-enhancement. Let $E_1,E_2\in\scrT$ be an exceptional sequence, and assume that $\Hom(E_1,E_2)\cong k$. Furthermore, assume  that there is  $q\neq0\in\bZ$ such that $\Hom(E_1,E_2[i])=0$ if and only if $i\not\in \{0,q\}$. Then there is an equivalence
\[
\l E_1,E_2\r\cong \scrKr^m_{q},
\]
 where $m\defeq \dim\Hom(E_1,E_2[q])+1$. 
\begin{proof} There is an equivalence $\l E_1,E_2\r\cong \Perf A$ by Theorem \ref{thm:keller equiv}\,(2), where 
 $A\defeq \REnd(E_1\oplus E_2)$ is the dg-endomorphism algebra of $E_1\oplus E_2$. This dg-algebra contains the subalgebra $R\defeq k\id_{E_1}\times k\id_{E_2}\cong k^2$, and the cohomology algebra $H^*(A)$ is isomorphic to the graded $R$-algebra $k\Kr^m_q$. By Lemma \ref{lem:kronecker formal}, $A$ is quasi-isomorphic to $k\Kr^m_q$. Consequently, $\l E_1,E_2\r\cong \Perf A\cong \scrKr^m_q$.
\end{proof}
\end{lem}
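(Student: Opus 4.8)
The plan is to reduce the statement to one about dg-algebras via derived Morita theory, identify the relevant cohomology algebra, and then invoke the intrinsic formality established in Lemma~\ref{lem:kronecker formal}. First I would note that, since $\scrT$ is idempotent complete and carries a dg-enhancement, the object $T\defeq E_1\oplus E_2$ is a classical (split) generator of the thick subcategory $\l E_1,E_2\r$, so the usual tilting statement for dg-categories gives an equivalence $\l E_1,E_2\r\cong\Perf A$ where $A\defeq\REnd(T)$ is the dg-endomorphism algebra computed inside the enhancement. Moreover $A$ contains the semisimple subalgebra $R\defeq k\,\id_{E_1}\times k\,\id_{E_2}\cong k^2$ of vertex idempotents, so $A$ is naturally a dg-$R$-algebra.

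Next I would compute $H^*(A)$ as a graded $R$-algebra. Since $E_1,E_2$ is an exceptional sequence one has $\Hom(E_i,E_i[j])=k$ for $j=0$ and $0$ otherwise, and $\Hom(E_2,E_1[j])=0$ for all $j$; by hypothesis $\Hom(E_1,E_2[j])$ equals $k$ for $j=0$, equals $k^{m-1}$ for $j=q$, and vanishes otherwise. Thus, as a graded vector space, $H^*(A)$ is $R$ in degree $0$ together with a one-dimensional piece in degree $0$ and an $(m-1)$-dimensional piece in degree $q$, all of the form $e_1(-)e_2$. Picking a generator $\alpha_0$ of $\Hom(E_1,E_2)$ and a basis $\alpha_1,\dots,\alpha_{m-1}$ of $\Hom(E_1,E_2[q])$ exhibits an isomorphism $H^*(A)\cong k\Kr^m_q$: every product of two arrows vanishes because both point from vertex $1$ to vertex $2$ and hence are never composable, which is exactly the (lack of) relations in the path algebra of $\Kr^m_q$.

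Finally, by Lemma~\ref{lem:kronecker formal} the graded $R$-algebra $k\Kr^m_q$ is intrinsically formal, so any dg-$R$-algebra with this cohomology, in particular $A$, is quasi-isomorphic to $k\Kr^m_q$ viewed as a dg-algebra with zero differential. Hence $\l E_1,E_2\r\cong\Perf A\cong\Perf(k\Kr^m_q)=\scrKr^m_q$, as desired.

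The step I expect to be most delicate is the identification of $H^*(A)$: one must check not only the graded dimensions but that the graded \emph{algebra} structure over $R$ coincides with $k\Kr^m_q$, and one must make sure the intrinsic-formality input is the \emph{relative} version over the base $R\cong k^2$ rather than over $k$. The Morita reduction in the first step is routine but genuinely uses both hypotheses: the dg-enhancement, so that $\REnd(T)$ makes sense, and idempotent completeness, so that the idempotent-complete category $\Perf A$ recovers $\l E_1,E_2\r$ on the nose rather than up to idempotent completion.
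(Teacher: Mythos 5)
Your proposal is correct and follows essentially the same route as the paper's proof: pass to $\Perf A$ for $A=\REnd(E_1\oplus E_2)$ viewed as a dg-$R$-algebra over $R\cong k^2$, identify $H^*(A)\cong k\Kr^m_q$, and invoke the intrinsic formality from Lemma~\ref{lem:kronecker formal}. The extra details you supply (why $T$ split-generates, why the arrow products vanish, and the reminder that formality must be taken relative to $R$ rather than $k$) are all accurate and simply make explicit what the paper leaves implicit.
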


\begin{lem}\label{lem:-2 curve}
Let $S$ be a smooth projective surface, and $E\subsetneq S$ be a smooth rational curve with $E^2=-m$ for some $m>1$. If $\cO_S\in \Db(S)$ is exceptional,  $\cO_S,\cO_S(E)$ is an exceptional sequence in $\Db(S)$. Moreover the admissible subcategory $\scrC\defeq\l \cO_S,\cO_S(E)\r\subseteq \Db(S)$ is equivalent to $\scrKr^m_1$.
\begin{proof}
A standard computation shows the first assertion and the following equality
\[
\Ext^*(\cO_S,\cO_S(E))\cong k\oplus (k^{m-1}[-1]),
\]
which induces an equivalence $\scrC\cong \scrKr^{m}_1$ by Lemma \ref{lem:kronecker equiv}.
\end{proof}
\end{lem}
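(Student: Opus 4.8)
The plan is to reduce the statement to two structure sequences of sheaves on $S$ and then quote Lemma \ref{lem:kronecker equiv}. First I would dispose of exceptionality and semiorthogonality. Exceptionality of $\cO_S(E)$ is free: the functor $(-)\otimes_S\cO_S(E)$ is an autoequivalence of $\Db(S)$, so $\Ext^*(\cO_S(E),\cO_S(E))\cong\Ext^*(\cO_S,\cO_S)=k$. For the vanishing $\Ext^*(\cO_S(E),\cO_S)\cong H^*(S,\cO_S(-E))=0$ I would use
\[
0\to\cO_S(-E)\to\cO_S\to\cO_E\to 0 .
\]
Since $E\cong\bP^1$ we have $H^*(E,\cO_E)=k$ concentrated in degree $0$, and the restriction map $H^0(S,\cO_S)\to H^0(E,\cO_E)$ sends $1$ to $1$, hence is an isomorphism of one-dimensional spaces; combined with $H^{>0}(S,\cO_S)=0$, the long exact cohomology sequence forces $H^*(S,\cO_S(-E))=0$. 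Thus $\cO_S,\cO_S(E)$ is an exceptional sequence in $\Db(S)$, and consequently $\scrC=\l\cO_S,\cO_S(E)\r$ is an admissible subcategory of $\Db(S)$ (an exceptional collection in the bounded derived category of a smooth projective variety generates an admissible subcategory).

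Next I would compute the $\Ext$-groups in the remaining direction. Twisting the structure sequence of $E$ by $\cO_S(E)$ yields
\[
0\to\cO_S\to\cO_S(E)\to\cO_E(E)\to 0,
\]
where $\cO_E(E)=\cO_S(E)|_E$ is the normal bundle of $E$, a line bundle of degree $E^2=-m$ on $E\cong\bP^1$, so $\cO_E(E)\cong\cO_{\bP^1}(-m)$. Hence $H^0(E,\cO_E(E))=0$ and, by Serre duality on $\bP^1$, $H^1(E,\cO_E(E))\cong H^0(\bP^1,\cO(m-2))^\vee\cong k^{m-1}$. Feeding this together with $H^*(S,\cO_S)=k$ (in degree $0$) into the long exact sequence gives $H^0(S,\cO_S(E))=k$, $H^1(S,\cO_S(E))=k^{m-1}$ and $H^2(S,\cO_S(E))=0$, i.e.
\[
\Ext^*(\cO_S,\cO_S(E))\cong k\oplus(k^{m-1}[-1]).
\]

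Finally I would apply Lemma \ref{lem:kronecker equiv}: $\Db(S)$ is idempotent complete and admits a dg-enhancement, $\Hom(\cO_S,\cO_S(E))\cong k$, and since $m>1$ we have $\Hom(\cO_S,\cO_S(E)[i])=0$ if and only if $i\notin\{0,1\}$. Thus the lemma applies with $q=1$, its parameter being $\dim\Hom(\cO_S,\cO_S(E)[1])+1=(m-1)+1=m$, which gives $\scrC\cong\scrKr^m_1$. I expect no genuine obstacle here: the argument is bookkeeping with two long exact cohomology sequences. The only points deserving a moment's attention are the identification $\cO_E(E)\cong\cO_{\bP^1}(-m)$, the fact that $H^0(S,\cO_S)\to H^0(E,\cO_E)$ is an isomorphism, and the observation that the hypothesis $m>1$ is precisely what supplies the ``only if'' direction (and $q\neq0$) required by Lemma \ref{lem:kronecker equiv}.
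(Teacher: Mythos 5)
Your proof is correct and spells out exactly the ``standard computation'' the paper leaves implicit: the two structure sequences of $E$ (untwisted for orthogonality, twisted by $\cO_S(E)$ for the Ext computation), the identification $\cO_E(E)\cong\cO_{\bP^1}(-m)$, and the observation that $m>1$ is what makes the hypotheses of Lemma \ref{lem:kronecker equiv} apply with $q=1$. This is the same approach as the paper, just with the details filled in.
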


\begin{prop} \label{prop:kronecker} Let $d \in \bZ_{>1}$. Then
 $\{2,3\}\subseteq\LS(\scrKr^d_1).$
\begin{proof}
By Lemma \ref{lem:kronecker equiv}, $\scrKr^d_1$ has a full exceptional sequence of length $2$, so $ 2 \in \LS(\scrKr^d_1)$.

Let $R_d=k[x_1,\hdots,x_{d-1}]/\l x_1,\hdots,x_{d-1}\r^2$.
It is well-known that  $\scrKr^d_1$ has a tilting object, cf.\ e.g.\ \cite[Proposition 1.5]{HP}.
By \cite[Theorem 6.26]{KK}, its endomorphism algebra is isomorphic to 
$B_d:=\End_{R_d}(R_d \oplus S_d)$, where $S_d$ is the simple $R_d$-module. In summary, there is a triangle equivalence
\begin{align}\label{eq:Equiv}
\scrKr^d_1 \cong \Db(B_d).
\end{align}

Let $e \in B_d$ be the primitive idempotent corresponding to the projection onto $R_d$, and let $S$ be the simple $B_d$-module corresponding to the idempotent $1-e$. Then there is a triangle equivalence, cf. e.g. \cite[Remark 2.9]{Kalckgentle}
\begin{align}\label{eq:VQ}
\Db(B_d)/\lbr S\rbr \cong \Db(R_d).
\end{align}
The latter category has length $2$ by \cite[Theorem A\,(2)]{el2} and Theorem \ref{class ring}.

Let $P_S$ be the projective cover of $S$. Then there is an isomorphism of $B_d$-modues $B_d \cong P_S \oplus P_e$, where $P_e$ is the indecomposable projective module corresponding to $e$.  The projective resolution 
\begin{align}
0 \to P_S^{\oplus d-1}  \to P_e \to P_S \to S \to 0
\end{align}
(cf.\ e.g.\ \cite[Theorem 2.8]{KK}) shows that $S$ is a $(d-1)$-bouquet $2$-spherelike object, so $\lbr S\rbr$ is simple, by Proposition \ref{prop:bouquet}. Together with \eqref{eq:Equiv} and \eqref{eq:VQ}, this shows $ 3 \in \LS(\scrKr^d_1)$ as claimed.
\end{proof}
\end{prop}

\begin{rem} Let $\Lambda(1, 2, 0)$ be the algebra introduced in Section \ref{subs: derived-discrete}.
Using the triangle equivalence
$
\scrKr^2_1 \cong \Db(\Lambda(1, 2, 0)),
$
a more detailed analysis shows 
\[\{2,3\} = \LS(\scrKr^2_1),\]
cf.\ also \cite{bro}.

\end{rem}

 Let $X$ be a smooth projective toric surface associated to a fan $\Sigma$. Let $\tau_1,\hdots,\tau_n$ be the set of rays in $\Sigma$, where the order of $\tau_i$ is counterclockwise, and denote by $E_i$ the irreducibel torus-invariant divisor corresponding to $\tau_i$. Then each $E_i$ is a smooth rational curve. {We say that a ray $\tau_i$ is {\it adjacent} to $\tau_j$ if $|i-j|\in \{1,n-1\}$.} 

\begin{thm}\label{thm:toric}
We keep the notation from above. 
 Assume that there is a sequence $E_{i_1},\hdots,E_{i_r}$ satisfying the following conditions:
\begin{itemize}
\item[$(1)$] For each $1\leq j\leq r$, $m_j\defeq -E_{i_j}^2>1$.
\item[$(2)$]There are no adjacent rays in $\tau_{i_1},\hdots,\tau_{i_r}$. 
\end{itemize}
Then there are admissible subcategories $\scrC_1,\hdots,\scrC_r$ and $\scrL_1,\hdots,\scrL_{r+1}$ such that $\scrC_j\cong \scrKr^{m_j}_1$, $\scrL_i$ is generated by an exceptional sequence of line bundles if $\scrL_i\neq0$, and there is a semi-orthogonal decomposition
\[
\Db(X)=\l \scrL_1,\scrC_1,\scrL_2,\scrC_2\hdots,\scrL_{r},\scrC_r,\scrL_{r+1} \r.
\]
In particular, we obtain
\[\{n,n+1,\hdots,n+r\}\subseteq \LS(\Db(X)),\]
and  $\JD(\Db(X))\geq r$ holds.
\begin{proof}
By \cite[Theorem 5.1]{hil}, $\Db(S)$ admits a full exceptional sequence
\[L_1,\hdots L_n\]
of line bundles, where $L_1\defeq \cO_S$ and  $L_i\defeq \cO\bigl(\sum_{j=1}^{i-1}E_j\bigr)$ for $i\geq2$. If we put   $\scrC_{j}\defeq\l L_{i_j},L_{i_j+1}\r$ and $m_j\defeq -E_{i_j}^2$,  there is a semi-orthogonal decomposition
\[
\hspace{10mm}\Db(S)=\l L_1,\hdots,L_{i_1-1},\scrC_{1},L_{i_1+2},\hdots, L_{i_r-1},\scrC_{r},L_{i_r+2},\hdots,L_{n}\r.
\]
By standard computation, $\Ext^*(L_{i_j},L_{i_j+1})\cong k\oplus (k^{m_j-1}[-1])$ holds. This implies an equivalence $\scrC_{j}\cong \scrKr^{m_j}_1$ by Lemma \ref{lem:-2 curve}.
 Combining Proposition \ref{prop:kronecker} with the semi-orthogonal decomposition shows that 
$\Db(S)$ admits composition series of lengths $n,n+1,\hdots,n+r$.
\end{proof}
\end{thm}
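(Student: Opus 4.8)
The plan is to realise the claimed semi-orthogonal decomposition by regrouping a Hille--Perling exceptional collection of line bundles on $X$, and then to feed that decomposition into the length machinery of Section~\ref{section:length} to produce composition series of every length from $n$ to $n+r$.

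First I would recall that, by \cite[Theorem 5.1]{hil}, for any choice of the ``first'' boundary divisor in the counterclockwise cyclic order the line bundles $L_1\defeq\cO_X$ and $L_i\defeq\cO_X\bigl(\sum_{j<i}E_j\bigr)$ for $2\le i\le n$ form a full exceptional collection of $\Db(X)$. The non-adjacency hypothesis forces $r<n$, so some ray is unselected; relabelling the rays cyclically I may assume that ray is $\tau_n$, so that, after reordering, $1\le i_1<\dots<i_r\le n-1$; the non-adjacency hypothesis then gives $i_{j+1}\ge i_j+2$. Consequently the index pairs $\{i_j,i_j+1\}$ are pairwise disjoint and correctly ordered inside $\{1,\dots,n\}$, so cutting the full exceptional collection into the consecutive blocks $L_1,\dots,L_{i_1-1}$; $L_{i_1},L_{i_1+1}$; $L_{i_1+2},\dots,L_{i_2-1}$; $\dots$; $L_{i_r+2},\dots,L_n$ yields a semi-orthogonal decomposition $\Db(X)=\l\scrL_1,\scrC_1,\scrL_2,\dots,\scrC_r,\scrL_{r+1}\r$, where $\scrC_j\defeq\l L_{i_j},L_{i_j+1}\r$ and each $\scrL_i$ is generated by a (possibly empty) exceptional subcollection of line bundles, an empty block being the zero subcategory.

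Next I would identify the blocks $\scrC_j$. Tensoring with $L_{i_j}$ is an autoequivalence of $\Db(X)$ sending $\cO_X\mapsto L_{i_j}$ and $\cO_X(E_{i_j})\mapsto L_{i_j+1}$, so $\scrC_j\cong\l\cO_X,\cO_X(E_{i_j})\r$. Since $X$ is rational, $\cO_X$ is exceptional; since $E_{i_j}\cong\bP^1$ and $E_{i_j}^2=-m_j$ with $m_j>1$, the restriction of $\cO_X(E_{i_j})$ to $E_{i_j}$ is $\cO_{\bP^1}(-m_j)$, and the short exact sequence $0\to\cO_X\to\cO_X(E_{i_j})\to\cO_{\bP^1}(-m_j)\to0$ gives $\Ext^*(\cO_X,\cO_X(E_{i_j}))\cong k\oplus k^{m_j-1}[-1]$. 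By Lemma~\ref{lem:-2 curve} (equivalently Lemma~\ref{lem:kronecker equiv} with $q=1$) this identifies $\scrC_j\cong\scrKr^{m_j}_1$, and each nonzero $\scrL_i$ is generated by its exceptional collection of line bundles by construction.

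Finally I would assemble composition series. By Proposition~\ref{prop:kronecker}, each $\scrKr^{m_j}_1$ with $m_j>1$ admits composition series both of length $2$ (from the exceptional pair) and of length $3$; each $\scrL_i$ admits a composition series whose length is the number of line bundles it contains (Proposition~\ref{prop:length exceptional}). Fixing a subset $T\subseteq\{1,\dots,r\}$, I would take a length-$3$ composition series of $\scrC_j$ for $j\in T$, a length-$2$ one for $j\notin T$, and any composition series of the $\scrL_i$, and concatenate them along the semi-orthogonal decomposition using Proposition~\ref{prop:length sod}; the resulting composition series of $\Db(X)$ has length $(n-2r)+2(r-|T|)+3|T|=n+|T|$. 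Letting $|T|$ run over $0,\dots,r$ gives $\{n,n+1,\dots,n+r\}\subseteq\LS(\Db(X))$, and since $\ell(\Db(X))\le\rk(K_0(\Db(X)))=n$ by Proposition~\ref{prop:length exceptional}, this forces $\iota(\Db(X))\ge r$. The only step needing real care is the cyclic bookkeeping at the start --- arranging the labelling so that every pair $\{L_{i_j},L_{i_j+1}\}$ genuinely lies inside the Hille--Perling collection and the blocks are well defined; once that is settled, the remaining ingredients are either quoted or reduce to a single cohomology computation.
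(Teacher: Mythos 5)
Your proposal is correct and follows essentially the same route as the paper: regroup the Hille--Perling exceptional collection of line bundles into the pairs $\{L_{i_j},L_{i_j+1}\}$, identify each pair via Lemma~\ref{lem:-2 curve} with $\scrKr^{m_j}_1$, and then use Proposition~\ref{prop:kronecker} and the semi-orthogonal concatenation (Proposition~\ref{prop:length sod}) to realise all lengths from $n$ to $n+r$. Your explicit cyclic relabelling so that $1\le i_1<\dots<i_r\le n-1$ spells out a small bookkeeping point that the paper leaves implicit when writing down the decomposition, but it does not change the substance of the argument.
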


Recall from Definition \ref{dfn:composite} that a thick subcategory $\scrU\subseteq \Db(X)$ is composite in $\Db(X)$ if and only if $\ell(\scrU)<\infty$ and $\ell(\Db(X)/\scrU)<\infty$. If a composite thick subcategory of $\Db(X)$ fails the JD property, so does $\Db(X)$.

\begin{cor}\label{cor:toric surface}
If a smooth projective toric surface $X$ has a smooth rational curve $E$ with $E^2<-1$, then $\Db(X)$ contains a composite admissible subcategory without the JD property. In particular, $\Db(X)$
does not satisfy the JD property.
\begin{proof}
This follows from Theorem \ref{thm:toric} and the following lemma.
\end{proof}
\end{cor}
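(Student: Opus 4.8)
The strategy is to upgrade the hypothesis to a \emph{torus-invariant} negative curve and then to quote Theorem~\ref{thm:toric}. The only ingredient not already contained in the earlier results is an auxiliary lemma, which I would state as: \emph{on a smooth projective toric surface $X$, every irreducible curve $C$ with $C^2<0$ equals one of the torus-invariant prime divisors $E_i$ attached to the rays of the fan.} To prove it I would first show that $C$ is rigid, i.e. $|C|=\{C\}$: if $D\sim C$ is effective, write $D=aC+D'$ with $D'\geq 0$ not containing $C$ and $a\geq 0$; intersecting with $C$ gives $(a-1)C^2=-C\cdot D'\leq 0$, hence $a\geq 1$, and then $D\sim C$ forces $(a-1)C+D'\sim 0$, so $a=1$ and $D'=0$, i.e. $D=C$. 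Since the torus $T$ is connected it acts trivially on $\Pic(X)$, so for every $t\in T$ the translate $t(C)$ is an effective divisor linearly equivalent to $C$, hence $t(C)\in|C|=\{C\}$; thus $C$ is $T$-stable, i.e. $C=E_i$ for some ray. Applied to our curve $E$ (which has $E^2<-1<0$), this gives $E=E_{i_1}$ with $m_1\defeq -E_{i_1}^2>1$.

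Next I would apply Theorem~\ref{thm:toric} with $r=1$ and the single curve $E_{i_1}$: condition~(2) of that theorem is vacuous for one curve, condition~(1) is exactly $m_1>1$, and $\cO_X$ is exceptional because $H^i(X,\cO_X)=0$ for $i>0$ on a toric variety. One thus obtains a semi-orthogonal decomposition $\Db(X)=\l\scrL_1,\scrC_1,\scrL_2\r$ with $\scrC_1\cong\scrKr^{m_1}_1$ and $\scrL_1,\scrL_2$ generated by (possibly empty) exceptional collections of line bundles. Put $\scrU\defeq\l\scrL_1,\scrC_1\r$. It is admissible, being generated by a finite exceptional collection of objects of the smooth projective category $\Db(X)$; from $\Db(X)=\l\scrU,\scrL_2\r$ one reads off $\ell(\Db(X)/\scrU)=\ell(\scrL_2)<\infty$; and $\ell(\scrU)<\infty$ by Proposition~\ref{prop:length sod}, since $\ell(\scrC_1)=2$ (an exceptional pair generates $\scrKr^{m_1}_1$) and $\scrL_1$ has finite length. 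Hence $\scrU$ is composite in $\Db(X)$. Now $\{2,3\}\subseteq\LS(\scrKr^{m_1}_1)=\LS(\scrC_1)$ by Proposition~\ref{prop:kronecker}, so concatenating a composition series of $\scrL_1$ coming from its generating exceptional collection with composition series of $\scrC_1$ of lengths $2$ and $3$ (via Lemma~\ref{lem:additivity}) yields two composition series of $\scrU$ whose lengths differ by one; thus $\scrU$ fails the Jordan--Dedekind property. Appending to each of these a fixed composition series of $\Db(X)/\scrU$ (Lemma~\ref{lem:additivity} again) produces two composition series of $\Db(X)$ of different lengths, so $\Db(X)$ does not satisfy the Jordan--Dedekind property.

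Everything that rests on Theorem~\ref{thm:toric} and Proposition~\ref{prop:kronecker} is routine bookkeeping; the real content, and the step I expect to require the most care, is the auxiliary lemma above --- that an irreducible curve of negative self-intersection on a smooth projective toric surface must be torus-invariant (the rigidity-plus-connectedness argument). A minor further point, already built into the above, is that the composite admissible subcategory should be taken to be the initial segment $\l\scrL_1,\scrC_1\r$ of the semi-orthogonal decomposition rather than $\scrC_1$ alone, so that its compositeness is immediate.
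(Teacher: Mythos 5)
Your proof is correct and reaches the same conclusion, but it proves the auxiliary lemma by a genuinely different method. The paper's argument that an irreducible curve $D$ with $D^2<0$ on a smooth projective toric surface must be torus-invariant is computational: it invokes the standard fact (\cite[Theorem 4.1.3]{cls}) that $D$ is linearly equivalent to an effective sum $\sum a_i D_i$ of torus-invariant prime divisors, then observes $D^2 = \sum a_i (D\cdot D_i) < 0$ forces $D\cdot D_i < 0$ for some $i$, and concludes $D = D_i$ because distinct irreducible curves meet nonnegatively. Your argument is more conceptual: you first prove $D$ is rigid ($|D| = \{D\}$) via the usual negative-self-intersection argument, then use that the connected torus $T$ acts trivially on the discrete group $\Pic(X)$, so every translate $t(D)$ lies in $|D|=\{D\}$, i.e.\ $D$ is $T$-stable and hence one of the $E_i$. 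Both proofs are short; the paper's is arguably the more standard one in the toric literature (it only uses the Cox--Little--Schenck decomposition), while yours trades that input for rigidity plus the connectedness-implies-trivial-action-on-$\Pic$ principle, which generalizes readily to any smooth projective surface with a connected group acting. Your remaining bookkeeping is also more explicit than the paper's terse ``this follows from Theorem~\ref{thm:toric}'': you correctly isolate $\scrU = \l \scrL_1, \scrC_1\r$ as the composite admissible subcategory that fails Jordan--Dedekind, which is what the corollary's first sentence asserts and what the paper leaves the reader to extract from the semi-orthogonal decomposition in Theorem~\ref{thm:toric}. One minor superfluity: you verify that $\cO_X$ is exceptional, but that is needed for Lemma~\ref{lem:-2 curve}, not for Theorem~\ref{thm:toric}, whose hypotheses (1) and (2) are all you actually need to check.
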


\begin{lem}
Notation is the same as above. Every irreducible curve $D$ with $D^2<0$ on $X$ is torus-invariant.
\begin{proof}
By \cite[Theorem 4.1.3]{cls}, there are torus invariant irreducible curves $D_1,\hdots,D_m$ such that $D$ is linearly equivalent to $\sum_{i=1}^ma_iD_i$ with $a_i\geq0$. Since $D^2=\sum_{i=1}^ma_i(D. D_i)<0$, there is $i$ with $D.D_i<0$. This implies $D=D_i$, since two different irreducible curves have a nonnegative intersection number \cite[Proposition V.1.4]{har}.
\end{proof}
\end{lem}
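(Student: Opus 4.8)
The plan is to deduce this from two standard facts: (i) on a complete toric variety the monoid of effective divisor classes is generated by the classes of the torus-invariant prime divisors, and (ii) on a smooth projective surface two \emph{distinct} irreducible curves meet non-negatively.

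First I would invoke \cite[Theorem 4.1.3]{cls}: since $X$ is a smooth projective (in particular complete) toric surface, every effective divisor class on $X$ is a non-negative integral combination of the classes of the torus-invariant prime divisors $D_1,\hdots,D_m$ (these are the curves $E_i$ in our notation). Because $D$ is an irreducible curve, it is effective, so $D$ is linearly equivalent to $\sum_{i=1}^m a_i D_i$ with all $a_i\geq 0$. Then I would intersect this relation with $D$ to get $D^2=\sum_{i=1}^m a_i(D\cdot D_i)$, and the hypothesis $D^2<0$ forces some summand to be negative; hence there is an index $i$ with $a_i>0$ and $D\cdot D_i<0$.

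To finish, I would use the elementary fact that two distinct irreducible curves on a smooth projective surface intersect properly and therefore have non-negative intersection number \cite[Proposition V.1.4]{har}. Since $D\cdot D_i<0$, this is impossible unless $D=D_i$; as $D_i$ is torus-invariant, so is $D$. There is no serious obstacle here: the only point that needs a moment's care is confirming that the generation statement for the effective cone is being applied to the right class of varieties (smooth projective toric surfaces), which is immediate from the cited reference, and that $D$ being reduced and irreducible is exactly what is needed to apply the intersection-number positivity in the form stated.
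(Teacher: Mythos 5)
Your proof is correct and follows essentially the same route as the paper's: decompose the class of $D$ as a non-negative combination of torus-invariant prime divisors via \cite[Theorem 4.1.3]{cls}, pair with $D$ to produce some $D\cdot D_i<0$, and conclude $D=D_i$ from \cite[Proposition V.1.4]{har}. The only addition is your explicit remark that some $a_i>0$ must accompany $D\cdot D_i<0$, which is a minor clarification of the same argument.
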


Corollary \ref{cor:toric surface} can be generalised to the following situation. 
\begin{prop}
Let $X$ be a smooth projective surface with $\cO_X\in\Db(X)$ exceptional, and let $E$ be a smooth rational curve with $E^2<-1$. If $\l\cO_X,\cO_X(E)\r$ is composite, $\Db(X)$ does not satisfy the JD property.
\begin{proof}
By Lemma \ref{lem:-2 curve}, there is an equivalence $\l\cO_X,\cO_X(E)\r\cong \scrKr^m_1$, where $m\defeq -E^2$. By Proposition \ref{prop:kronecker}, the admissible subcategory $\l\cO_X,\cO_X(E)\r$ does not satisfy the JD property. Since $\l\cO_X,\cO_X(E)\r$ is composite, $\Db(X)$ does not satisfy the JD property.
\end{proof}
\end{prop}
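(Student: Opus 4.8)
The plan is to reduce to the graded Kronecker category $\scrKr^m_1$ with $m\defeq -E^2>1$, and then to propagate the failure of the Jordan--Dedekind property from this composite subcategory up to $\Db(X)$ via additivity of lengths of composition series.

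First I would apply Lemma \ref{lem:-2 curve} with $S=X$: since $\cO_X$ is exceptional and $E\subseteq X$ is a smooth rational curve with $E^2=-m$ and $m>1$, that lemma shows that $\cO_X,\cO_X(E)$ is an exceptional sequence in $\Db(X)$ and produces an equivalence
\[
\scrC\defeq\l\cO_X,\cO_X(E)\r\;\cong\;\scrKr^m_1 .
\]
This is where the real content sits: Lemma \ref{lem:-2 curve} packages the computation $\Ext^*(\cO_X,\cO_X(E))\cong k\oplus(k^{m-1}[-1])$ together with the intrinsic formality of the graded Kronecker algebra (Lemma \ref{lem:kronecker formal}).

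Next, by Proposition \ref{prop:kronecker} we have $\{2,3\}\subseteq\LS(\scrKr^m_1)$; transporting along the above equivalence, $\scrC$ admits composition series $\scrS_*$ and $\scrS'_*$ with $\ell(\scrS_*)=2$ and $\ell(\scrS'_*)=3$. In particular $\ell(\scrC)<\infty$ and $\iota(\scrC)\geq 1$, so $\scrC$ itself fails the Jordan--Dedekind property. (Recall that Proposition \ref{prop:kronecker} ultimately rests on Theorem \ref{thm:appendix 1}, where the length-three series of $\widetilde{\scrA}_1$ is built from the bouquet $2$-sphere-like object $\cO_E(-1)$ together with Lemma \ref{lem:additivity}.) Now, since $\scrC$ is composite in $\Db(X)$ by hypothesis, the quotient $\Db(X)/\scrC$ has finite length, hence admits a composition series $\scrS''_*$, say with $\ell(\scrS''_*)=m'$. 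Applying Lemma \ref{lem:additivity} to the pairs $(\scrS_*,\scrS''_*)$ and $(\scrS'_*,\scrS''_*)$ then yields composition series of $\Db(X)$ of lengths $2+m'$ and $3+m'$. These are distinct, so $\Db(X)$ does not satisfy the Jordan--Dedekind property.

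I do not anticipate a genuine obstacle: essentially all of the work has already been done in Lemma \ref{lem:-2 curve} and in Theorem \ref{thm:appendix 1}/Proposition \ref{prop:kronecker}. The only point to be careful about is that the hypothesis ``composite'' is precisely what makes the last step go through --- it guarantees that $\Db(X)/\scrC$ has a composition series of finite length which can be stacked on top of each of the two distinct composition series of $\scrC$, producing two composition series of $\Db(X)$ of different lengths. (When $X$ is toric, this composite hypothesis is automatic from the semi-orthogonal decomposition of Theorem \ref{thm:toric}, recovering Corollary \ref{cor:toric surface}.)
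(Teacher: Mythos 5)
Your proposal is correct and follows essentially the same route as the paper: Lemma \ref{lem:-2 curve} to identify $\l\cO_X,\cO_X(E)\r$ with $\scrKr^m_1$, Proposition \ref{prop:kronecker} to get composition series of lengths $2$ and $3$ there, and the compositeness hypothesis to transfer the failure of the Jordan--Dedekind property to $\Db(X)$. The only difference is that you unpack the last step (stacking a composition series of the quotient on top via Lemma \ref{lem:additivity}), whereas the paper leaves this implicit by appealing to the remark preceding Corollary \ref{cor:toric surface} that a composite subcategory failing Jordan--Dedekind forces the ambient category to fail it as well.
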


\vspace{2mm}
\subsection{Derived-discrete algebras}~\label{subs: derived-discrete}

Let $\Omega=\{(r,n,m)\in\mathbb{Z}^3|n\geq r\geq 1,m\geq 0\}$  and let
\[\Lambda(r,n,m)\defeq kQ(r,n,m)/I(r,n,m)\] for $(r,n,m)\in\Omega$, where $Q(r,n,m)$ is the quiver
\[
\xymatrix@R=1.2pc@C=1.2pc{&&&&& \scriptstyle{1}\ar[r]^{\scriptstyle{\alpha_1}}&\cdot\ar@{.}[r]&\cdot\ar[r]^(.35){\scriptstyle{\alpha_{n-r-2}}} & \scriptstyle{n-r-1}\ar[rd]^{\scriptstyle{\alpha_{n-r-1}}}&\\
\scriptstyle{(-m)}\ar[r]^(.6){\scriptstyle{\alpha_{-m}}}&\cdot\ar@{.}[r]&\cdot\ar[r]^(.4){\scriptstyle{\alpha_{-2}}}&\scriptstyle{(-1)}\ar[r]^(.6){\scriptstyle{\alpha_{-1}}} & \scriptstyle{0}\ar[ru]^{\scriptstyle{\alpha_0}} &&&& & \scriptstyle{n-r}\ar[ld]^{\scriptstyle{\alpha_{n-r}}}\\
&&&&&\scriptstyle{n-1}\ar[lu]^{\scriptstyle{\alpha_{n-1}}} &
\cdot\ar[l]^(.35){\scriptstyle{\alpha_{n-2}}}&\cdot\ar@{.}[l]
&\scriptstyle{n-r+1}\ar[l]^(.65){\scriptstyle{\alpha_{n-r+1}}}}
\]
and $I(r,n,m)$ is the two-sided ideal of $kQ(r,n,m)$ generated by the paths
$\alpha_0\alpha_{n-1}$, $\alpha_{n-1}\alpha_{n-2}$, $\ldots$,
$\alpha_{n-r+1}\alpha_{n-r}$. Then the algebra $\Lambda(r,n,m)$ is  finite-dimensional, and it is of finite global dimension if and only if $r<n$. 

Let $\Lambda$ be a finite-dimensional $k$-algebra. We say that $\Lambda$ is {\it derived-discrete} if for every map $v\colon \bZ\to K_0(\Db(\Lambda))$ there are only finitely many isomorphism classes of objects $F\in \Db(\Lambda)$ such that $[H^i(F)]=v(i)\in K_0(\Db(\Lambda))$ for all $i\in\bZ$. 
Recall that $\Lambda$ is {\it connected} if it is not a direct product of two algebras, and we say that $\Db(\Lambda)$ is of {\it Dynkin type} if it is equivalent to the derived category $\Db(kQ)$ of a Dynkin quiver $Q$. It is easy to see that if the derived category $\Db(\Lambda)$ is of Dynkin type, then $\Lambda$ is derived-discrete. The following is the classification of derived-discrete algebras by \cite{bgs}.

\begin{thm}[{\cite[Theorem A]{bgs}}]\label{thm:dd alg}
Let $\Lambda$ be a connected finite-dimensional $k$-algebra, and assume that $\Db(\Lambda)$ is not of Dynkin type. Then $\Lambda$ is derived-discrete if and only if $\Db(\Lambda)\cong \Db(\Lambda(r,n,m))$ for some $(r,n,m)\in \Omega$. Moreover, $\Db(\Lambda(r,n,m))\cong \Db(\Lambda(r',n',m'))$ if and only if $(r,n,m)=(r',n',m')$.
\end{thm}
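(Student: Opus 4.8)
The plan is to deduce this from the literature: the first assertion is Vossieck's characterization of derived-discrete algebras \cite{vos} together with the normal-form theorem \cite[Theorem A]{bgs}, and the uniqueness part is the derived-invariance of the triple $(r,n,m)$, also established in \cite{bgs}. We only indicate the structure of the argument, since for our applications only the statement is needed.

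First I would recall Vossieck's dichotomy: a basic connected finite dimensional $k$-algebra $\Lambda$ has discrete derived category precisely when either $\Lambda$ is piecewise hereditary of Dynkin type -- the case excluded by hypothesis, since then $\Db(\Lambda)\cong\Db(kQ)$ for a Dynkin quiver $Q$ -- or $\Lambda\cong kQ/I$ for a gentle bound quiver $(Q,I)$ whose underlying graph contains exactly one cycle, and this cycle does not satisfy the clock condition (the numbers of clockwise and of counterclockwise relations along it differ). This reduces the classification to gentle one-cycle algebras without the clock condition. For such an algebra one then shows, by straightening the linear tail and concentrating the relations through an explicit chain of tilting complexes (equivalently, derived-equivalence-preserving mutations of the bound quiver), that $\Db(\Lambda)\cong\Db(\Lambda(r,n,m))$ for some $(r,n,m)\in\Omega$; here $n+m$ is the number of vertices, $r$ the number of relations, and $m$ the length of the tail attached to the cycle.

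For the "moreover" part I would verify that $(r,n,m)$ is a complete derived invariant of $\Lambda(r,n,m)$. The rank of $K_0(\Db(\Lambda(r,n,m)))$ recovers $n+m$, and the Avella-Alaminos--Gei\ss{} combinatorial invariant of a gentle algebra (or, alternatively, data read off from the Euler form together with the Coxeter polynomial) separates $r$ and $m$; hence $\Db(\Lambda(r,n,m))\cong\Db(\Lambda(r',n',m'))$ forces $(r,n,m)=(r',n',m')$. The genuinely hard step of this program -- carried out in \cite{bgs} and not reproved here -- is the construction of the explicit derived equivalences to the normal form $\Lambda(r,n,m)$, which rests on a delicate analysis of tilting objects over gentle algebras; we refer to \cite{vos,bgs} for the complete proofs.
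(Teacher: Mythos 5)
Your proposal is essentially correct and matches the paper's treatment: the paper states this result purely as a citation to \cite[Theorem A]{bgs} (supplemented implicitly by \cite{vos}) and gives no proof, and your sketch likewise defers the substance to those references while accurately summarising the structure of the argument (Vossieck's dichotomy into Dynkin type versus gentle one-cycle algebras failing the clock condition, reduction to the normal form $\Lambda(r,n,m)$, and derived invariance of $(r,n,m)$ via AAG-type invariants). Since the paper offers nothing beyond the citation, there is nothing further to compare.
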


The following lemma should also follow from a detailed analysis of \cite{bro}. For the convenience of the reader, we include a direct argument.

\begin{thm}\label{thm:appendix 2}
Let $(r,n,m) \in \Omega$ with $r<n$. The category $\Db(\Lambda(r,n,m))$  admits composition series of length $n+m$ and of length $n+m+1$.
\end{thm}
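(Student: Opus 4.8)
The plan is to deduce both composition series from a single semi-orthogonal decomposition of $\Db(\Lambda(r,n,m))$ into copies of $\Db(k)$ together with graded Kronecker quiver categories, in the spirit of Theorem~\ref{thm:toric}. The first step is to construct a semi-orthogonal decomposition
\[
\Db(\Lambda(r,n,m))=\l \scrL_0,\scrC_1,\scrL_1,\hdots,\scrC_s,\scrL_s\r,\qquad s\geq 1,
\]
in which each $\scrC_j$ is an admissible subcategory of the form $\l E_j,F_j\r$ for an exceptional pair $E_j,F_j$, each $\scrL_i$ is either $0$ or generated by an exceptional collection, and $\sum_{i=0}^{s}\rk(K_0(\scrL_i))+2s=n+m$. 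Since $r<n$, the algebra $\Lambda(r,n,m)$ has finite global dimension, so $\Db(\Lambda(r,n,m))=\Perf\Lambda(r,n,m)$ is idempotent complete with a dg-enhancement, and Lemma~\ref{lem:kronecker equiv} then identifies each $\scrC_j\cong\scrKr^{d_j}_1$ with $d_j\geq 1$, provided one checks that $\Ext^*(E_j,F_j)$ is concentrated in degrees $0$ and $1$ with one-dimensional degree-$0$ part; the decomposition is to be arranged so that at least one $d_{j_0}\geq 2$. The decomposition itself should be read off from the string-module combinatorics of the gentle algebra $\Lambda(r,n,m)$ (equivalently, extracted from \cite{bro}, or transported from a small resolution of a nodal threefold, cf.\ \cite{ky18}); carrying this out is the technical heart of the argument.

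Granting such a decomposition, the conclusion is formal. By Proposition~\ref{prop:kronecker}, $\{2,3\}\subseteq\LS(\scrKr^{d}_1)$ for every $d>1$, while $\scrKr^{1}_1\cong\Db(kA_2)$ has $\LS=\{2\}$; and each nonzero $\scrL_i$ carries a composition series of length $\rk(K_0(\scrL_i))$ coming from its exceptional collection (Proposition~\ref{prop:length exceptional}). Feeding these into Proposition~\ref{prop:length sod} (that is, iterating Lemma~\ref{lem:additivity}) and choosing for every $\scrC_j$ its length-$2$ composition series produces a composition series of $\Db(\Lambda(r,n,m))$ of total length $\sum_{i=0}^{s}\rk(K_0(\scrL_i))+2s=n+m$. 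In the single factor $\scrC_{j_0}$ with $d_{j_0}\geq 2$, replacing its length-$2$ series by a length-$3$ series raises the total by exactly one, giving a composition series of length $n+m+1$. (Refining all the $\scrC_j$ by their exceptional pairs also exhibits a full exceptional collection of $\Db(\Lambda(r,n,m))$, which re-derives the length-$(n+m)$ series directly via Proposition~\ref{prop:length exceptional}.)

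The content hidden inside Proposition~\ref{prop:kronecker} is the mechanism of Section~\ref{section:Hirzebruch}: inside $\scrC_{j_0}\cong\scrKr^{d_{j_0}}_1$ with $d_{j_0}\geq 2$ there is a $(d_{j_0}-1)$-bouquet $2$-sphere-like object $S$ (a genuine $2$-sphere-like object when $d_{j_0}=2$); by Proposition~\ref{prop:bouquet} the thick subcategory $\lbr S\rbr$ is simple, and the quotient $\scrKr^{d_{j_0}}_1/\lbr S\rbr$ still has length $2$, being the derived category of a local artinian ring (cf.\ Lemma~\ref{lem:appendix 2}). So $\lbr S\rbr$ contributes one extra step, and a sphere-like object of $\Db(\Lambda(r,n,m))$ is precisely what forces $n+m+1\in\LS(\Db(\Lambda(r,n,m)))$, exactly as for Hirzebruch surfaces.

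The main obstacle is the construction of the semi-orthogonal decomposition itself, and in particular the verification that some piece $\scrC_{j_0}$ is a non-strong exceptional pair whose Ext is nonetheless concentrated in degrees $0$ and $1$. Since $\gldim\Lambda(r,n,m)$ grows with $r$, a naively chosen consecutive pair in a full exceptional collection may carry higher Ext groups, so the collection has to be produced with care --- for instance by starting from the simple or projective modules and performing suitable mutations, or by pulling it back from the resolution of a compound Du Val threefold as in \cite{ky18}. This non-strongness is exactly what distinguishes $\Lambda(r,n,m)$ for $r<n$ from the path algebra of a finite acyclic quiver, for which Theorem~\ref{thm:JD wild} would instead force the Jordan--Dedekind property; it is the presence of a sphere-like object that the computation has to exhibit.
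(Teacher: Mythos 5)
Your proposal captures the correct underlying mechanism --- one extra composition-series step is supplied by a sphere-like object sitting inside a ``graded Kronecker''-type admissible subcategory --- and this is indeed the idea driving the paper's proof. However, there are two substantive problems.

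First, the claim that the Kronecker pieces have grading degree $q=1$, i.e.\ $\scrC_j\cong\scrKr^{d_j}_1$ with $\Ext^*(E_j,F_j)$ concentrated in degrees $\{0,1\}$, is incorrect for $r>1$. The sphere-like objects that $\Lambda(r,n,m)$ actually possesses are in higher degree: the simple module $S_1$ in $\Lambda(r,r+1,0)$ is $(r+1)$-sphere-like, and the complex $Q=(P_0\to P_{r-1}\to\cdots\to P_1)$ in $\Perf\Lambda(r,r,0)$ is $(1-r)$-sphere-like. By the mechanism of Proposition \ref{prop:grkronecker}, a $(q+1)$-sphere-like object corresponds to a $\scrKr^m_q$-piece, so for $r>1$ the relevant $q$ is $r$, not $1$. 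Your appeal to Proposition \ref{prop:kronecker} (which only treats $\scrKr^d_1$) and to the Hirzebruch-surface picture of Lemma \ref{lem:appendix 2} does not cover this case; you would need the analogue $\{2,3\}\subseteq\LS(\scrKr^m_q)$ for the relevant $m,q$, which is Proposition \ref{prop:grkronecker} when $m=2$ but is not established in the paper for $m>2$ with $q>1$.

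Second, and more fundamentally, the semi-orthogonal decomposition you posit is not constructed --- you explicitly defer it as ``the technical heart'' --- so as written this is a plan, not a proof. The paper's argument sidesteps the need for such an SOD entirely. Instead of locating a Kronecker-type \emph{admissible} subcategory, it uses the recollement $\lbr \fmod\Lambda/e\rbr\to\Db(\Lambda)\to\Db(e\Lambda e)$ attached to an idempotent $e$, which produces a composite thick subcategory even when it is not admissible. Choosing $e$ to delete the tail-and-top vertices reduces to $\Db(\Lambda(r,r+1,0))$ with a kernel generated by $n+m-r-1$ exceptional simples; a second idempotent $e=1-e_1$ in $\Lambda(r,r+1,0)$ exhibits the simple thick subcategory $\lbr S_1\rbr$ ($S_1$ being $(r+1)$-sphere-like) with quotient $\Db(\Lambda(r,r,0))$; and the infinite-global-dimension algebra $\Lambda(r,r,0)$ is handled via the non-admissible thick subcategory $\Perf\subsetneq\Db$ together with $\Dsg(\Lambda(r,r,0))\cong\Db(k)/[r]$. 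That flexibility --- using thick subcategories that are merely composite rather than admissible --- is exactly what lets the paper avoid the SOD construction you would have to carry out.
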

\begin{proof}
Write $\Lambda\defeq\Lambda(r,n,m)$.
It is well-known that the category $\Db(\Lambda)$ admits a full exceptional sequence, e.g.\ \cite[Proposition 7.6]{bpp}. This yields a composition series of length $n+m$ (since $Q(r,n,m)$ has $n+m$ vertices).

We now explain how to obtain a composition series of length $n+m+1$. The following claim is well-known to experts. We include the proof for convenience of the reader. Denote by $S_i$ the simple module corresponding to the vertex $i$. 

{\it Claim}: The simple $\Lambda$-modules $S_{-m}, S_{-m+1}, \ldots, S_0, S_1, \ldots, S_{n-r-1}$ form an exceptional sequence in $\Db(\Lambda)$. 

Indeed, we have projective resolutions
\begin{align}
0 \to P_{i+1} \to P_i \to S_i \to 0,
\end{align}
where $P_j$ is the indecomposable projective at vertex $j$. This shows that $\Ext^{>1}_{\Lambda}(S_i, S_j)=0$ for all $-m \leq i, j, \leq n-r-1$. Moreover, it implies for all $-m \leq i, j, \leq n-r-1$ that $\Ext^1_{\Lambda}(S_i, S_j)\neq 0$ iff $j=i+1$. 
Finally, since the $S_j$ are simple and $k$ is algebraically closed, we have $\dim_k \Hom_\Lambda(S_i, S_j)=\delta_{ij}$. Since the $S_i$ are modules we don't have negative Exts. Summing up, we have checked  the claim.

For any idempotent $e \in \Lambda$, the functor $\Hom_\Lambda(\Lambda e, -)\colon \fmod\Lambda \to \fmod e\Lambda e$ induces a short exact sequence of triangulated categories
\begin{align} \label{E:SESfromIdemp}
\lbr\fmod\Lambda/\l e\r\rbr \to \Db(\Lambda) \to \Db(e\Lambda e).
\end{align} 
Specifying $e$ to be the idempotent $1-(\sum_{i=-m}^{-1} e_i + \sum_{i=1}^{n-r-1} e_i) \in \Lambda$, where the $e_i$ are the primitive idempotents corresponding to the vertices, we obtain 
$\lbr\fmod \Lambda/\l e\r\rbr=\lbr S_{-m}, S_{-m+1}, \ldots, S_{-1}, S_1, \ldots, S_{n-r-1}\rbr$ and 
$e \Lambda e \cong \Lambda(r, r+1,0)$. Since $S_{-m}, S_{-m+1}, \ldots, S_{-1}, S_1, \ldots, S_{n-r-1}$ is an exceptional sequence, $\lbr\fmod\Lambda/\l e\r\rbr$ has a composition series of length $m+n-r-1$. Thus it suffices to show that $D^b(\Lambda(r, r+1,0))$ has a composition series of length $r+2$. 

Let us consider the sequence \eqref{E:SESfromIdemp} for $\Lambda=\Lambda(r, r+1,0)$ and $e=1-e_1$. By Proposition  \ref{prop:bouquet}, $\lbr\fmod\Lambda/\l e\r\rbr=\lbr S_1\rbr$ is simple. Indeed, applying $\Hom(-,S_1)$ to a projective resolution
\begin{align}
0 \to P_1 \to P_0 \to P_r \to \cdots \to P_{2} \to P_1 \to S_1\to 0
\end{align}
shows that  $S_1$ is $(r+1)$-sphere-like. 
 Now $e \Lambda e \cong \Lambda(r, r,0)$ has infinite global dimension. We claim that $\Perf(\Lambda(r, r,0))$ has a composition series of length $r$ and $\Dsg(\Lambda(r, r,0))\defeq\Db(\Lambda(r, r,0))/\Perf(\Lambda(r, r,0))$ is simple. Summing up, this shows that $\Db(\Lambda(r, r+1,0))$ has a composition series of length $1+r+1=r+2$ as claimed. 

We show the claim. We can assume that $r>1$ since we have already seen that $\Lambda(1, 1,0)\cong k[x]/\l x^2\r$ has length 2 by Example \ref{exa:singular}\,(1). 
Consider a complex
\[Q=(P_0 \to P_{r-1} \to P_{r-2} \to \cdots \to P_1)\in \Perf(\Lambda(r,r,0)),\]
which is $(1-r)$-sphere-like. Then $Q$ is right-orthogonal to the thick subcategory of $\Perf(\Lambda(r,r,0))$ generated by an exceptional sequence $P_0,P_{r-1},\hdots,P_2$. Indeed, all indecomposable projectives have length
2, and each simple appears precisely once as the head and precisely once as the socle of any projective since $\Lambda(r, r,0)$ is self-injective.
Therefore every morphism between indecomposable projectives  is either a scalar multiple of the identity or a scalar multiple of the multiplication with an arrow $\alpha_i$ ($0\leq i\leq r-1$). This shows that every morphism (in the category of cochain complexes) from $P_i$ to $Q$  is null-homotopic for each $i=0,r-1,\hdots,2$, and so $Q \in \l P_0,P_{r-1},\hdots,P_2 \r^\perp$. 
Since $P_1\in \lbr Q, P_0,P_{r-1},\hdots,P_2\rbr$, 
we have a semiorthogonal decomposition
\begin{align}
\Perf(\Lambda(r, r,0))&=\langle Q, P_0, P_{r-1}, P_{r-2}, \ldots, P_2 \rangle.
\end{align}
Thus this semiorthogonal decomposition yields a composition series of length $r$ as claimed, since $\lbr Q\rbr$ is simple by Proposition \ref{prop:bouquet}. 
Finally,  $\Dsg(\Lambda(r, r,0))$ is equivalent to the triangulated orbit category $\Db(k)/[r]$ cf.\ \cite{kal}, which is simple by Corollary \ref{cor:orbit}. 
\end{proof}

\begin{cor}\label{cor:appendix}
Let $\Lambda$ be a connected finite-dimensional $k$-algebra of finite global dimension, and assume that $\Lambda$ is derived-discrete. Then the following are equivalent:
\begin{itemize}
\item[$(1)$] $\Db(\Lambda)$ satisfies the JD property.
\item[$(2)$] $\Db(\Lambda)$ is of Dynkin type.
\end{itemize}
\begin{proof}
(2) $\Rightarrow$ (1) follows from Theorem \ref{thm:JD wild}. Assume that $\Db(\Lambda)$ is not of Dynkin type. Then by Theorem \ref{thm:dd alg}, $\Lambda$ is derived equivalent to $\Lambda(r,n,m)$ for some $(r,n,m)\in \Omega$. Since $\Lambda$ is of finite global dimension, so is $\Lambda(r,n,m)$. Hence $r<n$ holds, and $\Db(\Lambda)$ does not satisfy the JD property by Theorem \ref{thm:appendix 2}. This shows (1) $\Rightarrow$ (2). 
\end{proof}
\end{cor}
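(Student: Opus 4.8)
The plan is to prove the two implications separately, with all the real work outsourced to Theorems \ref{thm:JD wild}, \ref{thm:dd alg} and \ref{thm:appendix 2}, and to be careful only about checking that the relevant notions are preserved under triangle equivalence.

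For $(2)\Rightarrow(1)$ I would argue as follows. If $\Db(\Lambda)$ is of Dynkin type, then by definition there is a triangle equivalence $\Db(\Lambda)\cong\Db(kQ)$ for some Dynkin quiver $Q$; in particular $Q$ is a finite acyclic quiver, so Theorem \ref{thm:JD wild} shows that $\Db(kQ)$ satisfies the Jordan--Dedekind property. Since this property is phrased purely in terms of the lattice $\Th(-)$ and the notion of simplicity, it is transported along the equivalence, so $\Db(\Lambda)$ satisfies it as well. (Note that this direction does not even use the derived-discreteness hypothesis, and that a Dynkin-type algebra is automatically derived-discrete, so the case is genuinely covered.)

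For $(1)\Rightarrow(2)$ I would argue by contraposition. Assume $\Db(\Lambda)$ is not of Dynkin type. Since $\Lambda$ is connected, finite-dimensional over $k$ and derived-discrete, Theorem \ref{thm:dd alg} applies and gives a triple $(r,n,m)\in\Omega$ together with a triangle equivalence $\Db(\Lambda)\cong\Db(\Lambda(r,n,m))$. Because $\Lambda$ has finite global dimension and finite global dimension is a derived invariant, $\Lambda(r,n,m)$ has finite global dimension too; by the criterion recorded in the definition of $\Lambda(r,n,m)$ this is equivalent to $r<n$. Now Theorem \ref{thm:appendix 2} furnishes composition series of $\Db(\Lambda(r,n,m))$ of length $n+m$ and of length $n+m+1$. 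Transporting these along the equivalence $\Db(\Lambda(r,n,m))\cong\Db(\Lambda)$ (again using that composition series and their lengths are equivalence-invariant) produces composition series of $\Db(\Lambda)$ of two distinct lengths, so $\Db(\Lambda)$ fails the Jordan--Dedekind property, contradicting $(1)$.

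Since every nontrivial step is already established in the cited results, I do not expect a genuine obstacle here; the only things to state explicitly are the derived-invariance of the hypotheses (finite global dimension) and of the property under discussion, and the fact that the two composition series of Theorem \ref{thm:appendix 2} indeed have different lengths, which is immediate. The substance of the corollary lies entirely in Theorem \ref{thm:appendix 2} (the existence of a composition series of length $n+m+1$ built from a sphere-like object) and in the classification Theorem \ref{thm:dd alg}.
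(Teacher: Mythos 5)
Your proposal is correct and follows essentially the same route as the paper: $(2)\Rightarrow(1)$ via Theorem \ref{thm:JD wild}, and $(1)\Rightarrow(2)$ by contraposition using the classification in Theorem \ref{thm:dd alg}, the derived-invariance of finite global dimension to deduce $r<n$, and Theorem \ref{thm:appendix 2} to produce composition series of lengths $n+m$ and $n+m+1$. The explicit remarks on transporting the Jordan--Dedekind property and composition series along a triangle equivalence are left implicit in the paper but are a harmless addition.
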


The following was independently also pointed out to us by Greg Stevenson.

\begin{rem}
By Theorem \cite[Theorem 3.3]{orl}, the above derived categories $\Db(\Lambda)$ of derived-discrete algebras can be realized as admissible subcategories $\scrA$ of derived categories of some smooth projective schemes. If $\ell(\scrA^{\perp})<\infty$
 and $\scrA\cong\Db(\Lambda)$ does not satisfy the JD property, the derived category of such a scheme does not satisfy the JD property. 
\end{rem}

\vspace{2mm}
\subsection{Threefolds}~

Let $Y$ be a nodal projective threefold, i.e.~all its singular points $y_1,\hdots,y_r\in Y$ are ordinary double points.  Assume that there is a (crepant) small resolution $X\to Y$  with exceptional curves $C_1,\hdots,C_r\subset X$. If we write
\[
K_i\defeq \cO_{C_i}(-1),
\]
it is a $3$-spherical object in $\Db(X)$, and the spherical objects $K_1,\hdots,K_r$ are orthogonal to each other. Denote by 
\[
\sfT_i\defeq\sfT_{K_i}\colon \Db(X)\simto\Db(X)
\]
the spherical twist associated to $K_i$. The goal of this section is to prove the following.

\begin{thm}\label{thm:3-fold}
Let $E_1,\hdots,E_r\in \Db(X)$ be an exceptional sequence  such that 
\begin{equation}\label{eqn:3-fold}
E_i|_{C_j}\cong \cO_{C_j}(\pm\delta_{ij}),
\end{equation}
 and put
$
\scrP\defeq\l \scrP_1,\hdots,\scrP_r\r,
$
where $\scrP_i\defeq\l E_i,\sfT_i(E_i)\r$. Assume that there is a composition series of length $\ell$ in $\Db(X)/\scrP$. Then 
\[
\{2r+\ell, 2r+1+\ell, \hdots, 3r+\ell\}\subseteq\LS(\Db(X)).
\]
In particular, $\Db(X)$ does not satisfy the JD property.
\end{thm}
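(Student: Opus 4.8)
The argument mirrors the toric surface case (Theorem~\ref{thm:toric}): the plan is to show that each $\scrP_i$ admits composition series of length two and of length three, that $\scrP=\langle\scrP_1,\dots,\scrP_r\rangle$ is a semi-orthogonal decomposition, and then to glue these together with the given composition series of $\Db(X)/\scrP$. The first step is a local analysis near the curves $C_i$. Since a small resolution of an ordinary double point is crepant, each $C_i$ is a $(-1,-1)$-curve, so $N_{C_i/X}\cong\cO_{\bP^1}(-1)^{\oplus2}$ and $\omega_X|_{C_i}$ is trivial. Using the identity $i_{C_i}^{!}F\cong i_{C_i}^{*}F\otimes\omega_{C_i/X}[-2]$ with $\omega_{C_i/X}\cong\cO_{C_i}(-2)$, together with the restriction hypotheses $E_i|_{C_j}\cong\cO_{C_j}(\pm\delta_{ij})$, a direct computation gives $\RHom_X(K_i,E_j)=\RHom_X(E_j,K_i)=0$ for $i\neq j$, and shows that both $\RHom_X(K_i,E_i)$ and $\RHom_X(E_i,K_i)$ are one-dimensional and concentrated in a single cohomological degree. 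In particular the defining triangle of the spherical twist reads $K_i[c_i]\to E_i\to\sfT_i(E_i)\to K_i[c_i+1]$ for a single integer $c_i$, so $K_i\in\scrP_i$ and $\scrP_i=\langle E_i,K_i\rangle=\langle E_i,\sfT_i(E_i)\rangle$.

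Applying $\RHom(-,E_i)$ to this triangle and using $\RHom(E_i,E_i)=k$ gives $\RHom(\sfT_i(E_i),E_i)=0$; since $\sfT_i(E_i)$ is exceptional (being the image of an exceptional object under an autoequivalence), $(E_i,\sfT_i(E_i))$ is a full exceptional collection of $\scrP_i$, so $\ell(\scrP_i)=2$ by Proposition~\ref{prop:length exceptional}. Applying instead $\RHom(E_i,-)$ and using the computation of $\RHom(E_i,K_i)$ yields $\RHom(E_i,\sfT_i(E_i))\cong k\oplus k[-2]$, whence $\scrP_i\cong\scrKr^2_2$ by Lemma~\ref{lem:kronecker equiv}. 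For the length-three composition series: the object $K_i=\cO_{C_i}(-1)$ is $3$-spherical, hence $1$-bouquet $3$-sphere-like, so $\lbr K_i\rbr$ is simple by Proposition~\ref{prop:bouquet} (valid since $\Db(X)$, and hence its thick subcategory $\lbr K_i\rbr$, is idempotent complete with a dg-enhancement). It then suffices to prove $\ell(\scrP_i/\lbr K_i\rbr)=2$, for then Lemma~\ref{lem:additivity} produces a composition series of $\scrP_i$ of length $1+2=3$, giving $\{2,3\}\subseteq\LS(\scrP_i)$. This is the analogue of Lemma~\ref{lem:appendix 2} and is the main obstacle; I would handle it by showing, via the compatibility of the inclusion $\lbr K_i\rbr\subseteq\scrP_i$ with the contraction $f\colon X\to Y$ (note that $f_*K_i=0$), that $\scrP_i/\lbr K_i\rbr$ is equivalent to the bounded derived category of a non-regular Artinian ring---the piece of $\Db(Y)$ attached to the node $y_i$---which has length two by Example~\ref{exa:singular}(1); alternatively one argues directly inside $\scrKr^2_2$.

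Finally, the vanishings $\RHom(K_i,E_j)=\RHom(E_j,K_i)=\RHom(K_i,K_j)=0$ for $i\neq j$, together with the assumption that $E_1,\dots,E_r$ is an exceptional sequence, show that $\scrP=\langle\scrP_1,\dots,\scrP_r\rangle$ is a semi-orthogonal decomposition, so $\langle\scrP_1,\dots,\scrP_i\rangle/\langle\scrP_1,\dots,\scrP_{i-1}\rangle\cong\scrP_i$ for each $i$. For every $0\leq s\leq r$, choose composition series of length three for $s$ of the factors $\scrP_i$ and of length two for the remaining $r-s$; by Proposition~\ref{prop:length sod} this gives a composition series of $\scrP$ of length $2(r-s)+3s=2r+s$, and gluing it with a composition series of $\Db(X)/\scrP$ of length $\ell$ via Lemma~\ref{lem:additivity} produces a composition series of $\Db(X)$ of length $2r+s+\ell$. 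Hence $\{2r+\ell,2r+1+\ell,\dots,3r+\ell\}\subseteq\LS(\Db(X))$, and since $r\geq1$ this set is not a singleton, so $\Db(X)$ fails the Jordan--Dedekind property.
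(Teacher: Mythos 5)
Your overall strategy matches the paper's: establish the semi-orthogonal decomposition $\scrP = \l \scrP_1,\hdots,\scrP_r\r$, show each $\scrP_i \cong \scrKr_2$ (which is $\scrKr^2_2$ in the paper's notation), produce composition series of lengths $2$ and $3$ for each $\scrP_i$ using the spherical object $K_i$, and glue via Proposition~\ref{prop:length sod} and Lemma~\ref{lem:additivity}. This is exactly the structure of the paper's proof, which deduces everything from Proposition~\ref{prop:3-fold} (quoted from [KS]) and Proposition~\ref{prop:grkronecker}. You re-derive the relevant parts of Proposition~\ref{prop:3-fold} by a local normal-bundle computation rather than citing it; that is fine.

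However, you correctly identify the computation $\ell\bigl(\scrP_i/\lbr K_i\rbr\bigr)=2$ as ``the main obstacle,'' and the first route you sketch to fill it does not work as stated. You propose that $\scrP_i/\lbr K_i\rbr$ is the bounded derived category of a non-regular commutative Artinian ring (``the piece of $\Db(Y)$ attached to the node'') and invoke Example~\ref{exa:singular}(1). That description is correct for the \emph{surface} model (cf.\ the Hirzebruch case, where $\scrA_1 \cong \Db\bigl(k[x_1,\hdots,x_{d-1}]/\langle x_1,\hdots,x_{d-1}\rangle^2\bigr)$), but for a threefold node the categorical ordinary double point absorbed by $\scrP_i$ is $\D_{\rm fd}(\sfA_1)$ with $\sfA_1 = k[x]/\langle x^2\rangle$, $\deg(x) = -1$, i.e.\ finite-dimensional dg-modules over a genuinely graded dg-algebra, which is \emph{not} $\Db$ of any commutative ring. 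This is precisely why the paper proves Lemma~\ref{lem:cat odp}, which establishes $\ell\bigl(\D_{\rm fd}(\sfA_p)\bigr)=2$ for all $p\geq0$ by a separate argument (identifying the singularity category with $L_p$-graded vector spaces). Your second suggestion --- ``argue directly inside $\scrKr^2_2$'' --- is exactly what Proposition~\ref{prop:grkronecker} does: it uses [KS, Lemmas~3.5 and~3.7] to identify a spherical object $K_+$ and the quotient $\scrKr_q/\lbr K_+\rbr \cong \D_{\rm fd}(\sfA_{q-1})$, and then applies Lemma~\ref{lem:cat odp}. So the missing ingredient is precisely that identification plus Lemma~\ref{lem:cat odp}; Example~\ref{exa:singular}(1) is not the right substitute. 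Everything else in your argument (the orthogonality checks, the identification $\RHom(E_i,\sfT_i(E_i))\cong k\oplus k[-2]$, the simplicity of $\lbr K_i\rbr$, and the final gluing to get lengths $2r+\ell,\hdots,3r+\ell$) is correct and agrees with the paper.
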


The main ingredients of the proof are results from \cite{ks}.

\begin{prop}[\cite{ks}]\label{prop:3-fold}
Let $E_1,\hdots,E_r\in \Db(X)$ be an exceptional sequence that satisfies
\eqref{eqn:3-fold}.
Then the following holds.
\begin{itemize}
\item[$(1)$] For each $1\leq i\leq r$, the objects  $E_i,\sfT_i(E_i)$ form an exceptional sequence.
\item[$(2)$] The subcategory $\scrP_i\defeq\l E_i,\sfT_i(E_i)\r$ is equivalent to $\scrKr_2$.
\item[$(3)$] The sequence of subcategories $\scrP_1,\hdots,\scrP_r$ is semi-orthogonal in $\Db(X)$.
\end{itemize} 
\begin{proof}
By assumption \eqref{eqn:3-fold}, standard computation shows that 
\[
\dim\Ext^*(E_i,K_i)=\delta_{ij}.
\] Therefore, (1) and (2) follow from \cite[Lemma 3.10]{ks}, and (3) follows from \cite[Theorem 4.2\,(i)]{ks}.
\end{proof}
\end{prop}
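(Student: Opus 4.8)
The three assertions all reduce to one $\Ext$-computation together with the defining triangle of the spherical twist. Write $i_j\colon C_j\hookrightarrow X$ for the inclusion, so that $K_j\cong i_{j*}\cO_{C_j}(-1)$, and recall that $\sfT_i$ is determined by a triangle $\RHom_X(K_i,F)\otimes_k K_i\xrightarrow{\;\mathrm{ev}\;}F\to\sfT_i(F)\xrightarrow{+1}$, functorial in $F\in\Db(X)$.

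First I would prove the numerical input $\dim_k\Ext^\bullet_X(E_i,K_j)=\delta_{ij}=\dim_k\Ext^\bullet_X(K_j,E_i)$. The adjunction between $Li_j^*$ and $i_{j*}$ gives $\RHom_X(E_i,K_j)\cong\RHom_{C_j}(Li_j^*E_i,\cO_{C_j}(-1))$; by \eqref{eqn:3-fold} the first argument is the line bundle $\cO_{C_j}(\pm\delta_{ij})$ in degree $0$, so the right-hand side is $\RHom_{\bP^1}(\cO(\pm\delta_{ij}),\cO(-1))$, which is one-dimensional and concentrated in a single cohomological degree ($0$ or $1$) when $i=j$, and vanishes when $i\neq j$. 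For the other direction I would use Serre duality $\RHom_X(E_i,K_j)\cong\RHom_X(K_j,E_i\otimes\omega_X[3])^{\vee}$ together with $\omega_X|_{C_j}\cong\cO_{C_j}$ — which holds because $\pi$ is a small resolution, so $\omega_X\cong\pi^*\omega_Y$ and $C_j$ is contracted to a point; as $K_j$ is supported on $C_j$, tensoring the argument by $\omega_X$ does not change $\RHom_X(K_j,-)$, whence $\dim_k\Ext^\bullet_X(K_j,E_i)=\delta_{ij}$. Record for later that $\Ext^\bullet_X(E_i,K_i)\cong k[-a]$ and $\Ext^\bullet_X(K_i,E_i)\cong k[a-3]$ for some $a\in\{0,1\}$.

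For assertion $(1)$: $E_i$ is exceptional by hypothesis and $\sfT_i(E_i)$ is exceptional because $\sfT_i$ is an autoequivalence, so only $\RHom_X(\sfT_i(E_i),E_i)=0$ remains. Applying $\RHom_X(-,E_i)$ to the twist triangle yields a triangle $\RHom_X(\sfT_i(E_i),E_i)\to\RHom_X(E_i,E_i)\xrightarrow{\;\mathrm{ev}^*\;}\RHom_X(K_i,E_i)^{\vee}\otimes\RHom_X(K_i,E_i)\xrightarrow{+1}$; by the previous step both the second and third terms are one-dimensional and in degree $0$, and $\mathrm{ev}^*$ carries $\id_{E_i}$ to the element corresponding to $\id_{\RHom_X(K_i,E_i)}$, which is nonzero, so $\mathrm{ev}^*$ is an isomorphism and $\RHom_X(\sfT_i(E_i),E_i)=0$; thus $E_i,\sfT_i(E_i)$ is an exceptional collection. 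For assertion $(2)$: applying $\RHom_X(E_i,-)$ to the twist triangle gives $\RHom_X(K_i,E_i)\otimes\RHom_X(E_i,K_i)\to\RHom_X(E_i,E_i)\to\RHom_X(E_i,\sfT_i(E_i))\xrightarrow{+1}$, in which the first term is $k[a-3]\otimes k[-a]=k[-3]$ and the second is $k$ in degree $0$; since $\Hom_{D(k)}(k[-3],k)=0$ the first map vanishes and $\RHom_X(E_i,\sfT_i(E_i))\cong k\oplus k[-2]$. Hence the exceptional collection $E_i,\sfT_i(E_i)$ has $\Hom(E_i,\sfT_i(E_i))\cong k$ and $\Hom(E_i,\sfT_i(E_i)[n])=0$ for $n\notin\{0,2\}$, with $\dim_k\Hom(E_i,\sfT_i(E_i)[2])=1$, so Lemma \ref{lem:kronecker equiv}, applied inside the idempotent complete, dg-enhanced category $\Db(X)$, gives $\scrP_i=\l E_i,\sfT_i(E_i)\r\cong\scrKr^2_2=\scrKr_2$.

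For assertion $(3)$: the twist triangle exhibits $\sfT_i(E_i)$ as an object of the thick subcategory $\lbr K_i,E_i\rbr$, so $\scrP_i\subseteq\lbr K_i,E_i\rbr$. Because, for any object $B$, the full subcategory $\{C\mid\RHom_X(C,B)=0\}$ is thick, and likewise $\{D\mid\RHom_X(C,D)=0\}$ for any object $C$, it is enough to check that $\RHom_X(A,B)=0$ for $A\in\{K_i,E_i\}$ and $B\in\{K_j,E_j\}$ whenever $i>j$. These four vanishings hold: $\RHom_X(E_i,E_j)=0$ since $E_1,\dots,E_r$ is an exceptional sequence, $\RHom_X(K_i,K_j)=0$ by mutual orthogonality of $K_1,\dots,K_r$, and $\RHom_X(E_i,K_j)=0=\RHom_X(K_i,E_j)$ by the computation above since $i\neq j$. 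Therefore $\RHom_X(\scrP_i,\scrP_j)=0$ for $i>j$, i.e.\ $\scrP_1,\dots,\scrP_r$ is a semi-orthogonal collection in $\Db(X)$. The one genuinely geometric ingredient is the first step — reading off from \eqref{eqn:3-fold} that $Li_j^*E_i$ is a degree-$0$ line bundle on $C_j\cong\bP^1$, and controlling $\RHom_X(K_j,-)$ under the twist by $\omega_X$; after that, everything is a formal manipulation of the spherical-twist triangle, of cohomological bifunctors, and of Lemma \ref{lem:kronecker equiv}.
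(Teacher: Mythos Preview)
Your argument is correct. The key numerical input --- that $\dim_k\Ext^\bullet_X(E_i,K_j)=\delta_{ij}$, obtained from \eqref{eqn:3-fold} via adjunction on $C_j\cong\bP^1$ together with Serre duality and triviality of $\omega_X|_{C_j}$ --- is exactly the computation the paper records. From there the paper simply cites \cite[Lemma~3.10]{ks} for (1) and (2) and \cite[Theorem~4.2(i)]{ks} for (3), whereas you unpack these citations: you run the spherical-twist triangle through the bifunctors $\RHom_X(-,E_i)$ and $\RHom_X(E_i,-)$ to get $\RHom_X(\sfT_i(E_i),E_i)=0$ and $\RHom_X(E_i,\sfT_i(E_i))\cong k\oplus k[-2]$, then invoke the paper's own Lemma~\ref{lem:kronecker equiv} to identify $\scrP_i\cong\scrKr_2$, and you prove (3) by reducing to generators $K_i,E_i$ of $\lbr K_i,E_i\rbr\supseteq\scrP_i$. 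So the route is the same in substance; what you gain is a self-contained argument that does not leave the paper, at the cost of reproducing a special case of \cite{ks}. One small remark: your verification that $\mathrm{ev}^*$ is nonzero (sending $\id_{E_i}$ to the canonical element of $\RHom_X(K_i,E_i)^\vee\otimes\RHom_X(K_i,E_i)$) is precisely the reason the argument needs $\RHom_X(K_i,E_i)\neq 0$, which is where \eqref{eqn:3-fold} enters essentially.
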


For the proof of Theorem \ref{thm:3-fold}, we need to consider {\it categorical ordinary double points}, which we recall below. For $p\geq 0$, consider the following $\bZ$-graded ring 
\[
\sfA_p\defeq k[x]/\l x^2\r,
\]
where $\deg(x)=-p$. Considering $\sfA_p$ as a dg-algebra, we consider its derived category $\D(\sfA_p)$ and denote by $\D_{\rm fd}(\sfA_p)\subset \D(\sfA_p)$ the subcategory of dg-modules whose total cohomology is finite-dimensional. Then the following partial generalization of Example \ref{exa:singular}\,(1) holds.

\begin{lem}\label{lem:cat odp}
We have $\ell(\D_{\rm fd}(\sfA_p))=2$.
\begin{proof}
The perfect derived category $\Perf(\sfA_p)\subset \D_{\rm fd}(\sfA_p)$ is split generated by the free module $\sfA_p$, which is a $(-p)$-sphere-like object by construction of $\sfA_p$. By Proposition \ref{prop:bouquet}, $\Perf(\sfA_p)$ is simple. Moreover,  it is well known that $\Dsg(\sfA_p)\defeq\D_{\rm fd}(\sfA_p)/\Perf(\sfA_p)$ is also simple.  Indeed,  it is equivalent to the triangulated category $\vect^{L_p}(k)$ of $L_p$-graded finite-dimensional vector spaces, where $L_p\defeq\bZ/(p+1)\bZ$, the shift functor $[1]$ is the grading shift $(1)$ and triangles are (split) short exact sequences. The category $\vect^{L_p}(k)$ is split generated by a graded vector space of the form $k(i)$ for arbitrary $i\in L_p$, and every object in $\vect^{L_p}(k)$ contains $k(i)$ as a direct summand for some $i\in L_p$. Hence $\vect^{L_p}(k)$ is simple, and so is $\Dsg(\sfA_p)$.
\end{proof}
\end{lem}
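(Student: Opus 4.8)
The plan is to produce a composition series of length $2$; since the existence of a composition series of length $2$ forces $\ell(\D_{\rm fd}(\sfA_p))=2$ (such a series exhibits a proper nonzero thick subcategory, so the category is not simple, giving $\ell\geq 2$, while $\ell\leq 2$ is immediate), this suffices. The natural candidate is
\[
\bigl(0\subsetneq \Perf(\sfA_p)\subsetneq \D_{\rm fd}(\sfA_p)\bigr),
\]
so I must verify three things: $\Perf(\sfA_p)$ is simple; the Verdier quotient $\Dsg(\sfA_p)\defeq \D_{\rm fd}(\sfA_p)/\Perf(\sfA_p)$ is simple; and both inclusions are strict.

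For the simplicity of $\Perf(\sfA_p)$: the free rank-one dg-module $\sfA_p$ is a compact generator, so $\Perf(\sfA_p)=\lbr\sfA_p\rbr$, and $\REnd(\sfA_p)\cong \sfA_p$, whence $\Hom(\sfA_p,\sfA_p[i])=H^i(\sfA_p)$ is $k$ for $i\in\{0,-p\}$ and vanishes otherwise (recall $\sfA_p$ carries the zero differential and $\deg x=-p$). Thus, for $p\geq 1$, the object $\sfA_p$ is $(-p)$-sphere-like, and Proposition \ref{prop:bouquet} — whose hypotheses hold since $\Perf(\sfA_p)$ has an evident dg-enhancement and is idempotent complete — shows that $\lbr\sfA_p\rbr=\Perf(\sfA_p)$ is simple. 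The degenerate case $p=0$, where $\sfA_0=k[x]/\l x^2\r$ lives in degree $0$ and the sphere-like argument breaks down, is handled separately: $\D_{\rm fd}(\sfA_0)\simeq \Db(k[x]/\l x^2\r)$, so the claim is precisely Example \ref{exa:singular}\,(1).

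For the simplicity of $\Dsg(\sfA_p)$: I will identify this quotient explicitly. Since, up to grading shift, the only indecomposable non-free finite-length $\sfA_p$-module is the residue field $k$, the singularity category $\Dsg(\sfA_p)$ is semisimple and equivalent to the triangulated category $\vect^{L_p}(k)$ of finite-dimensional $L_p$-graded vector spaces, with $L_p\defeq \bZ/(p+1)\bZ$, shift $[1]$ equal to the grading shift $(1)$, and distinguished triangles the split short exact sequences; the period $p+1$ is forced by the relation $k[p+1]\cong k$ in $\Dsg(\sfA_p)$ arising from the syzygy triangle of $k$ over $\sfA_p$. In $\vect^{L_p}(k)$ every nonzero object has some $k(i)$ as a direct summand, and each $k(i)$ split-generates (they are mutual shifts of $k(0)$), so $\vect^{L_p}(k)$, hence $\Dsg(\sfA_p)$, is simple.

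Finally, strictness: $\sfA_p\neq 0$ gives $0\subsetneq \Perf(\sfA_p)$, and $\sfA_p$ is singular, so the residue field has infinite projective dimension and is not perfect, i.e. $\Dsg(\sfA_p)\neq 0$ and $\Perf(\sfA_p)\subsetneq \D_{\rm fd}(\sfA_p)$. Combining the pieces, the displayed chain is a composition series of length $2$, so $\ell(\D_{\rm fd}(\sfA_p))=2$. I expect the main work to be the explicit description of the triangulated structure on $\Dsg(\sfA_p)$ — in particular checking that its shift functor is the internal grading shift, so that the identification with $\vect^{L_p}(k)$ and the value $p+1$ of the period are correct — together with keeping the grading conventions for $\sfA_p$ as a dg-algebra straight; the case $p=0$ is the only other point needing separate care, dispatched by the reduction to Example \ref{exa:singular}\,(1) above.
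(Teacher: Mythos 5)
Your argument is essentially the same as the paper's: simplicity of $\Perf(\sfA_p)$ via the $(-p)$-sphere-like generator $\sfA_p$ and Proposition \ref{prop:bouquet}, simplicity of $\Dsg(\sfA_p)$ via the identification with $\vect^{L_p}(k)$, and then the two-step chain. The one genuine addition in your write-up is the separate treatment of $p=0$: you correctly observe that $\sfA_0$ is concentrated in degree $0$, so it is not sphere-like in the sense of Definition \ref{dfn:bouquet} (which requires $d\neq 0$), and you dispatch this case by noting $\D_{\rm fd}(\sfA_0)\simeq \Db(k[x]/\l x^2\r)$ and invoking Example \ref{exa:singular}\,(1). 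The paper's proof does not single out $p=0$, and strictly speaking it should — so this is a small but real improvement in precision, not merely a stylistic difference.
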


The following is a partial generalization of Proposition \ref{prop:kronecker}. 

\begin{prop}\label{prop:grkronecker}
For $q\geq1$, the graded Kronecker quiver category $\scrKr_q$ has composition series of length $2$ and $3$. 
\begin{proof}
The category $\scrKr_q$ is generated by an exceptional sequence $E,E'$ consisting of the direct summands of the free module $k\Kr_q$ corresponding to each vertex of $\Kr_q$. Thus it has a composition series of length $2$. Define an object $K_+\in\scrKr_p$ by the following triangle
\[
K_+\to E\xrightarrow{\alpha_0}E'.
\]
By \cite[Lemma 3.5]{ks}, $K_+$ is a $(1+q)$-spherical object, and thus $\lbr K_+\rbr$ is simple. Combining Lemma \ref{lem:cat odp} with an equivalence $\scrKr_q/\lbr K_+\rbr\cong \D_{\rm fd}(\sfA_p)$ by \cite[Lemma 3.7]{ks}, we see that $\scrKr_q$ has a composition series of length $3$.
\end{proof}
\end{prop}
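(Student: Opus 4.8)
The plan is to exhibit two composition series of $\scrKr_q$ explicitly: one of length $2$ coming from the defining exceptional pair, and one of length $3$ obtained by ``peeling off'' a spherical object and then invoking Lemma \ref{lem:cat odp}.

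First I would record the relevant morphism spaces. Let $E,E'$ be the indecomposable summands of the free module $k\Kr_q$ corresponding to the two vertices of the graded Kronecker quiver $\Kr_q$. Then $\End(E)\cong\End(E')\cong k$, one has $\Hom(E',E[i])=0$ for all $i\in\bZ$, and $\Hom(E,E'[i])\cong k$ for $i\in\{0,q\}$ while it vanishes otherwise; in particular $E,E'$ is a full exceptional collection of $\scrKr_q$. By Proposition \ref{prop:length exceptional}, the chain $\lbr E\rbr\subset\lbr E,E'\rbr=\scrKr_q$ is therefore a composition series of length $2$.

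For the length $3$ series I would use the object $K_+$ defined by the triangle
\[
K_+\to E\xrightarrow{\ \alpha_0\ }E'\to K_+[1],
\]
where $\alpha_0$ is the degree-zero arrow. Using this triangle together with the $\Hom$-computations above (equivalently, by \cite[Lemma 3.5]{ks}), one checks that $\Hom(K_+,K_+[i])\cong k$ for $i=0$, that $\Hom(K_+,K_+[i])\cong k$ for $i=1+q$, and that $\Hom(K_+,K_+[i])=0$ otherwise, so that $K_+$ is a $(1+q)$-spherical object of $\scrKr_q$. Since $\scrKr_q=\Perf(k\Kr_q)$ admits a dg enhancement and is idempotent complete, Proposition \ref{prop:bouquet} applies and shows that $\lbr K_+\rbr$ is simple, i.e. $\ell(\lbr K_+\rbr)=1$. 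Next I would identify the Verdier quotient $\scrKr_q/\lbr K_+\rbr$ with $\D_{\rm fd}(\sfA_q)$ via \cite[Lemma 3.7]{ks}; alternatively, one checks that the image of $E$ split-generates the quotient, that its derived endomorphism dg-algebra has cohomology the graded dual numbers $\sfA_q=k[x]/\l x^2\r$ with $\deg x=-q$, and then concludes by the same formality argument underlying Proposition \ref{prop:bouquet}. By Lemma \ref{lem:cat odp} we have $\ell(\D_{\rm fd}(\sfA_q))=2$, so applying Lemma \ref{lem:additivity} with $\scrU=\lbr K_+\rbr$ produces a composition series of $\scrKr_q$ of length $1+2=3$, as required.

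The main obstacle is the analysis of the triangle $K_+\to E\to E'$: both the spherical computation for $K_+$ and the identification $\scrKr_q/\lbr K_+\rbr\cong\D_{\rm fd}(\sfA_q)$ are where the structure of $\scrKr_q$ really enters, and the cleanest route is to import these from \cite{ks}. If one wanted a fully self-contained argument, the genuinely delicate point would be the \emph{simplicity} of $\lbr K_+\rbr$, which relies on the intrinsic formality input feeding Proposition \ref{prop:bouquet} and not merely on the sphere-like vanishing of $\Hom$-groups.
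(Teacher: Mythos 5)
Your proof is correct and follows essentially the same route as the paper: the length-$2$ series comes from the exceptional pair $E,E'$, and the length-$3$ series is obtained by noting $K_+$ is $(1+q)$-spherical (via \cite[Lemma 3.5]{ks} and Proposition \ref{prop:bouquet}), identifying $\scrKr_q/\lbr K_+\rbr\cong \D_{\rm fd}(\sfA_q)$ by \cite[Lemma 3.7]{ks}, and applying Lemma \ref{lem:cat odp} together with Lemma \ref{lem:additivity}. You even silently correct a typo in the paper's statement ($\sfA_p$ should be $\sfA_q$), and your closing remark that the simplicity of $\lbr K_+\rbr$ is the delicate formality input is exactly the point where Proposition \ref{prop:bouquet} is doing the real work.
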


\begin{proof}[Proof of Theorem \ref{thm:3-fold}]
By Proposition \ref{prop:3-fold},  
\[
\scrP_1,\hdots,\scrP_r\subseteq \Db(X)
\]
is a semi-orthogonal sequence of admissible subcategories. Since each $\scrP_i$ is equivalent to $\scrKr_2$ by  Proposition \ref{prop:3-fold}\,(2), there are composition series of length $2r+\ell, 2r+1+\ell, \hdots, 3r+\ell$ by Proposition \ref{prop:grkronecker}.
\end{proof}

\begin{exa}
Let $Y_5$ be a smooth {\it quintic del Pezzo threefold}, i.e. a smooth projective variety such that $H\defeq-\frac{1}{2}K_{Y_5}$ is an ample generator of the Picard group and $H^3=5$.
 Consider the blow-up
\[
\sigma\colon\widetilde{Y_5}\to Y_5
\]
of $Y_5$ along a smooth rational curve  of degree $4$, and denote by $E\subset \widetilde{Y_5}$ the exceptional divisor of $\sigma$.  By \cite[Proposition 2.5]{ks2}, the base locus
\begin{equation}\label{eqn:curve}
C\defeq {\rm Bs}(|H-E|)\subset \widetilde{Y_5}
\end{equation}
of the linear system $|H-E|$ is a smooth rational curve, and the equalities  $H.C=1$ and $E.C=2$ hold.  Moreover, by \cite[Proposition 2.6\,(iii)]{ks2}, there is a  small resolution 
\[
\pi\colon \widetilde{Y_5}\to X
\]
of a Fano threefold $X$ with exactly one node, and the exceptional locus of $\pi$ is $C$ constructed  in \eqref{eqn:curve}. Then $D\defeq \cO_{\widetilde{Y_5}}(E-H)$ is an exceptional object and $D|_{C}\cong \cO_C(1)$ holds. Thus  $D, \sfT_{\cO_C(-1)}(D)$ is an exceptional sequence by Proposition \ref{prop:3-fold}.  Put 
\[
\scrP\defeq \l D, \sfT_{\cO_C(-1)}(D)\r.
\]
Then we claim that $\Db(\widetilde{Y_5})/\scrP\cong \,^{\perp}\scrP$ is of finite length. Indeed, the semi-orthogonal decomposition \cite[Equation (33)]{ks2} shows that there are an exceptional sequence $E_1, E_2\in \,^{\perp}\scrP$ and an equivalence
\[
\l E_1, E_2\r^{\perp}\cong \sigma^*(\scrB_{Y_5}),
\]
where the orthogonal on the left hand side is taken in $\,^{\perp}\scrP$, and $\scrB_{Y_5}\defeq \l \cO_{Y_5},\cO_{Y_5}(H)\r^{\perp}\subset \Db(Y_5)$. Since $\sigma^*$ is fully faithful, there is an equivalence $\scrB_{Y_5}\cong \sigma^*(\scrB_{Y_5})$. Furthermore, $\scrB_{Y_5}$ is equivalent to the derived category of finite-dimensional representations of the $3$-Kronecker quiver, and so $\scrB_{Y_5}$ has a full exceptional sequence of length two (see \cite[Section 1.1]{ks2}). Consequently, there is a composition series $\scrS_*\in \CS(\Db(\widetilde{Y_5})/\scrP)$ with $\ell(\scrS_*)=4$, and by Theorem \ref{thm:3-fold} 
\[
\{6,7\}\subseteq \LS(\Db(\widetilde{Y_5})).
\]
In particular, $\Db(\widetilde{Y_5})$ does not satisfy the JD property.
\end{exa}

\begin{rem}
The variety $\widetilde{Y_5}$ is not Fano, since $(-K_{\widetilde{Y}_5}).C=(2H-E).C=0$.
\end{rem}

\noindent
All known examples of smooth projective varieties whose derived categories don't have the JD property are not Fano. This leads to the following question.

\begin{ques}
Let $X$ be a smooth Fano variety. If $\ell(\Db(X))<\infty$, does $\Db(X)$ satisfy the JD property?
\end{ques}

\end{document}